\documentclass[11pt,letterpaper, reqno]{amsart}

\usepackage[english]{babel}

\usepackage[margin = 1.in]{geometry}

\usepackage[numbers]{natbib} % for citet citations etc.
\usepackage{amsmath}
\usepackage{amssymb}
\usepackage{amsthm}
\usepackage{graphicx}
\usepackage{comment}
\usepackage{mathtools}
\usepackage{nicefrac}
\usepackage{xcolor}
\usepackage{overpic}
\usepackage[normalem]{ulem}
\usepackage{stmaryrd} % Where \llbracket and \rrbracket are defined

\usepackage{algorithm}
\usepackage{algpseudocode}
\usepackage{listings}
\usepackage{xcolor}

\usepackage{subcaption}

\usepackage{enumitem}
\setlist[enumerate]{label=({\roman*})}

\newtheorem{lemma}{Lemma}
\newtheorem{corollary}{Corollary}
\newtheorem{proposition}{Proposition}
\newtheorem{theorem}{Theorem}
\newtheorem{definition}{Definition}
\newtheorem{example}{Example}

\newtheorem{remark}{Remark}

\newcommand{\figref}[1]{Figure~\ref{#1}}
\newcommand{\corref}[1]{Corollary~\ref{#1}}
\newcommand{\defref}[1]{Definition~\ref{#1}}
\newcommand{\teqref}[1]{Equation~\eqref{#1}}

\newcommand{\secref}[1]{Section~\ref{#1}}
\newcommand{\appref}[1]{Appendix~\ref{#1}}
\newcommand{\thmref}[1]{Theorem~\ref{#1}}
\newcommand{\lemref}[1]{Lemma~\ref{#1}}
\newcommand{\propref}[1]{Proposition~\ref{#1}}

\newcommand{\tblref}[1]{Table~\ref{#1}}

\newcommand{\exref}[1]{Example~\ref{#1}}

\def\ie{\emph{i.e.}}
\def\eg{\emph{e.g.}}

\def\resp{\emph{resp.}}

\def\const{\operatorname{const}}

\def\inv{\operatorname{inv}}
\def\cycT{\operatorname{Cyc}(\cT)}
\def\area{\operatorname{area}(\Delta)} % \triangle was already taken as the set of triangles
\def\spann{\operatorname{span}}

\def\tPsi{\tilde{\Psi}}
\def\ttPsi{\tilde{\tilde{\Psi}}}
\def\RR{\mathbb{R}}
\def\NN{\mathbb{N}}

\def\bA{{\bf A}}
\def\bB{{\bf B}}
\def\bC{{\bf C}}

\def\bI{{\bf I}}

\def\bL{{\bf L}}
\def\bM{{\bf M}}

\def\bP{{\bf P}}
\def\bQ{{\bf Q}}
\def\bR{{\bf R}}
\def\bS{{\bf S}}
\def\bT{{\bf T}}

\def\bV{{\bf V}}

\def\bX{{\bf X}}

\def\ba{{\bf a}}
\def\bb{{\bf b}}
\def\bc{{\bf c}}

\def\bPhi{{\bf \Phi}}
\def\bPsi{{\bf \Psi}}
\def\bPhiA{{\bf \Phi}_{\operatorname{A}}}
\def\bPhiN{{\bf \Phi}_{\operatorname{N}}}
\def\bPhiS{{\bf \Phi}_{\operatorname{S}}}

\def\bPhiAinv{{\bf \Phi}_{\operatorname{A}}^{-1}}
\def\bPhiNinv{{\bf \Phi}_{\operatorname{N}}^{-1}}

\def\bPhiID{{\bf \Phi}_{\operatorname{Id}}}
\def\bomega{{\bf \omega}}
\def\bPhiR{{\bf \Phi}_{\operatorname{R}}}
\def\bomega{{\bf \omega}}

\def\tN{t_{\operatorname{N}}}
\def\tA{t_{\operatorname{A}}}

\def\cT{{\mathcal{T}}}

\newtheorem{deflemma}[definition]{Definition \& Lemma}

\newcommand{\nmlz}[1]{{\Sigma_{#1}}}

\usepackage[colorlinks=true, allcolors=blue]{hyperref}

\title{
(Semi-)Invariant Curves from Centers of Triangle Families
}

\author{Klara Mundilova} 
\address{EPFL, Station 14, 1015 Lausanne, Vaud, Switzerland}
\curraddr{}
\email{klara.mundilova@epfl.ch}
\thanks{}

\author{Oliver Gross} 
\address{University of California San Diego, 9500 Gilman Dr, La Jolla, CA 92093, USA}
\curraddr{}
\email{ogross@ucsd.edu}
\thanks{}

\date{\today}

\begin{document}

\begin{abstract}
We study curves obtained by tracing triangle centers within special families of triangles, focusing on centers and families that yield (semi-)invariant triangle curves, meaning that varying the initial triangle changes the loci only by an affine transformation. We identify four two-parameter families of triangle centers that are semi-invariant and determine which are invariant, in the sense that the resulting curves for different initial triangles are related by a similarity transformation. We further observe that these centers, when combined with the aliquot triangle family, yield sheared Maclaurin trisectrices, whereas the nedian triangle family yields Lima\c{c}on trisectrices.

\medskip\noindent\textbf{Keywords:} triangle centers; loci of triangle centers; affine invariance; similarity invariance;  Maclaurin trisectrix; Lima\c{c}on trisectrix; plane algebraic curves

\medskip\noindent\textbf{MSC 2020:} 51M15, 51M04, 14H50, 51N10, 51N20
\end{abstract}
\maketitle
\begin{flushright}
        {\bf \emph{``To me, this was weird.''}}\\
    D. Hofstadter\footnote{Taken from the foreword in \cite{Kimberling:1998:TCC}.}
\end{flushright}

\section{Introduction} 
Given a triangle, a number of quantities and objects can be associated with it, arguably the simplest being its side lengths and interior angles. In addition, there are selected points such as the centroid, the circumcenter, and the Fermat point, which are well-known representatives of so-called \emph{triangle centers}. The study of triangle centers can be traced back to the ancient Greeks and, pioneered by Kimberling~\cite{Kimberling:1998:TCC, Kimberling:2025:ETC}, it also enjoys attention in contemporary research%triangle geometry
~\cite{Abu:2005:TCL, Narboux:2016:TCV}. Notably, the study of selected configurations of triangle centers led to the discovery of special algebraic curves associated to triangles~\cite{Abu:2007:ACA, Cundy:1995:SCC}. For example, the Balaton curve---which contains a triangles orthocenter, circumcenter, in-center, Toricelli's point, and first isodynamic point---is algebraic for certain triangles~\cite{Dirnbock:2003:Curves}. 

Other lines of work examine curves associated to triangles obtained by tracing points (or triangle centers) in a one-parameter family of triangles. Examples of such families include poristic triangles~\cite{Odehnal:2011:PLT} and triangles obtained by linearly displacing one of the triangle's vertices~\cite{Jurkin:2022:LOC, Kodrnja:2023:LCT}.

Aliquot and nedian triangles are notable examples of triangle families in which the centroids are concurrent \cite{Satterly:1956:NNT}. Kodrnja and Koncul~\cite{Kodrnja:2017:LVN} describe the loci of the vertices of nedian triangles, while Mundilova~\cite{Mundilova:2024:MTT}  demonstrates that the first isogonic and isodynamic centers of aliquot triangles trace an algebraic curve, the \emph{Maclaurin trisectrix}, regardless of the initial triangle.

Inspired by the functional definition of triangle centers, we introduce the concept of $\bPsi$-triangle families, which includes the aliquot and nedian families as prominent examples. The central theme of this work is the study of curves traced by triangle centers as the underlying triangle varies within such a family. We identify triangle centers, and combinations thereof that give rise to curves with strong geometric rigidity. Under changes of the reference triangle, they transform merely by affine or even similarity transformations.

To formalize this notion, we distinguish between two types of rigidity. A triangle curve is called \emph{semi-invariant} if, under a change of the reference triangle, the curve traced by a given triangle center changes only by an affine transformation, namely translations, rotations, scaling, or sheering. A curve is called \emph{invariant} if it is semi-invariant without allowing sheering.

\subsection{Outline}
This paper is organized as follows. In \secref{sec:triangleCenters}, we review the definition of triangle centers and summarize a selection of properties that will be used throughout the paper. In \secref{sec:PsiTriangleFamilies}, we introduce $\bPsi$-triangle families and show that they form a geometric structure of independent interest. In particular, we prove that the set of triangle families, equipped with concatenation, carries the structure of an Abelian group.

Our main results are presented in \secref{sec:trianglecurves}. We first study local and global properties of triangle curves in \secref{sec:propertiesOfTriangleCurves}, with emphasis on semi-invariance and invariance. We show that the property of a triangle center generating a semi-invariant curve with respect to the aliquot family extends to a broad class of $\bPsi$-triangle families. We also prove that invariance is preserved under concatenation of families.

Building on work of Mundilova~\cite{Mundilova:2024:MTT}, who showed that two triangle centers generate Maclaurin trisectrices when combined with the aliquot family, we substantially extend this result in \secref{sec:mainresult}. We identify four two-parameter families of semi-invariant triangle centers with respect to the class of decomposable triangle families.  Within these families, we characterize those triangle centers that give rise to invariant curves. We show that in the special case of aliquot or nedian triangle families, the resulting curves are sheared Maclaurin or Lima\c{c}on trisectrices, respectively. 

Finally, in~\secref{sec:inverseProblem}, we address an inverse problem. For the semi-invariant curve-generating triangle centers identified in~\secref{sec:mainresult}, we determine the $\bPsi$-triangle family that produces a prescribed admissible target curve.

Before turning to the study of triangle centers, we fix notation and conventions that will be used throughout the paper. For readability, we present the main concepts and results in the body of the paper. Unless stated otherwise, proofs are deferred to the appendix.

\subsection{Notation}
Any distinct three points \(\bA,\bB,\bC\in\RR^2\) uniquely define a triangle \(\Delta\) with edges \(\ba\), \(\bb\), and \(\bc\) opposite the respective vertices, whose lengths we denote by $a, b, c\geq 0$ respectively. Additionally, we follow the convention to label the angles incident to the vertices $\bA$, $\bB$, and $\bC$ by $\alpha$, $\beta$, and $\gamma$, respectively (see \figref{fig:notation}). 
In the following, we consider only triangles \(\Delta\) that are non-degenerate, \ie, the three vertices are assumed to be in general position. 
We denote the set of all such triangles by
\[\triangle\coloneqq\{\Delta\mid \bA,\bB,\bC\in\RR^2 \text{ are distinct and non-collinear points} \}.\]
We can equivalently describe the set of non-degenerate triangles in terms of their side-lengths, \ie, triples \((a,b,c)\in\RR^3_{>0}\) such that \(a<  b+c\), \(b < c+a\) and \(c < a+b\). For brevity, and by slight abuse of notation we will denote the set of such triples also by \(\triangle\). Using \emph{Heron's formula}, we express the area of the triangle as
$$
\area = \frac{1}{4}\sqrt{(a + b + c)(-a + b + c)(a - b + c)(a + b -c)}.
$$ 
Note that $\area\neq 0$ for $\Delta \in \triangle$.

\begin{figure}[!t]
\begin{footnotesize}
\begin{overpic}[width=0.32\textwidth]{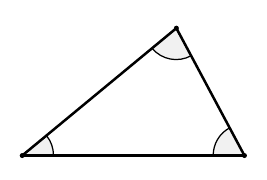}
\put(2,3){$\bA$}
\put(92,3){$\bB$}
\put(63,59){$\bC$}
\put(50,4){$c$}
\put(81,33){$a$}
\put(32,33){$b$}
\put(14.75,10.5){$\alpha$}
\put(83,11.5){$\beta$}
\put(63,48){$\gamma$}
\end{overpic}
\hfill
\begin{overpic}[width=0.32\textwidth]{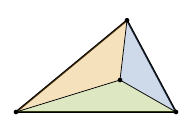}
\put(2,3){$\bA$}
\put(92,3){$\bB$}
\put(63,59){$\bC$}
\put(34,27){$[x_1\!\!:\!x_2\!\!:\!x_3]$}
\put(69,28){$x_1$}
\put(50,35){$x_2$}
\put(55,15){$x_3$}
\end{overpic}
\hfill
\begin{overpic}[width=0.32\textwidth]{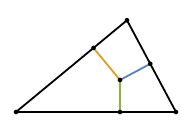}
\put(2,3){$\bA$}
\put(92,3){$\bB$}
\put(63,59){$\bC$}
\put(45,20){$\begin{bmatrix}\lambda_1\\\lambda_2\\\lambda_3\end{bmatrix}$}
\put(65,15){$\lambda_3$}
\put(70,25){$\lambda_1$}
\put(55,36){$\lambda_2$}
\end{overpic}%
\end{footnotesize}%
\caption{Illustration of the notation used in this paper, highlighting geometric interpretations of barycentric coordinates (center) and trilinear coordinates (right).
}\label{fig:notation}
\end{figure}

To keep the in parts lengthy equations as concise as possible we employ two sets of coordinate systems throughout the paper, which both specify points in the plane in terms of the vertex positions \(\bA,\bB,\bC\in\RR^2\). First, we consider \emph{homogeneous barycentric coordinates} \([\lambda_1 : \lambda_2:\lambda_3]^T\), denoted as column vectors, which determine the position of a point \(\bP\in\RR^2\) by
\[\bP =\frac{1}{\lambda_1 + \lambda_2 + \lambda_3}\left( \lambda_1\bA + \lambda_2\bB + \lambda_3\bC\right).\]
They can be \emph{normalized} by asking that \(\lambda_1+\lambda_2+\lambda_3=1\).
In contrast, \emph{homogeneous trilinear coordinates} \([x_1:x_2:x_3]\), denoted as row vectors, determine the same point \(\bP\) as 
$$
\bP = \frac{1}{a \lambda_1 + b \lambda_2 + c \lambda_3}\left( ax_1 \bA + bx_2 \bB + cx_3 \bC\right).
$$
Whenever it is reasonably defined, the conversion between coordinates of these two sets of coordinate systems is achieved by division or multiplication with the triangle's side lengths: \[[x_1:x_2:x_3] = 
\begin{bmatrix}
\nicefrac{\lambda_1}{a}\\ \nicefrac{\lambda_2}{b}\\ \nicefrac{\lambda_3}{c} 
\end{bmatrix},\quad \text{\resp}\quad  \begin{bmatrix} \lambda_1\\\lambda_2\\ \lambda_3\end{bmatrix} = [ ax_1 :bx_2 :cx_3].\]
Consequently, division of the expressions \(ax_1, bx_2, cx_3\) by \(ax_1+bx_2+cx_3\) yields the conversion from trilinear to normalized barycentric coordinates.

\section{Triangle Centers}\label{sec:triangleCenters}

\subsection{Definition and preliminaries }

The following definition of a triangle center is adapted from Kimberling~\cite{Kimberling:1998:TCC}.
\begin{definition}
    \label{def:TriangleCenter}
A function \(\psi\colon\triangle\to\RR\), \(\psi\not\equiv 0\), is said to be a \emph{triangle center function} if it is
\begin{enumerate}
    \item \emph{homogeneous}, \ie, there exists some constant $r\in\RR$ such that for all $t>0$
we have \begin{equation}
            \label{eq:Homogeneity}
            \psi(ta, tb, tc) = t^r \psi(a, b, c).
        \end{equation} 
        In this paper, we will use the notation $\deg(\psi) = r$ to indicate that $\psi$ has homogeneity $r$.
\item \emph{bi-symmetric} in the second and third variables, \ie, for all \(a,b,c\in\triangle\), we
have \begin{equation}
        \label{eq:Bisymmetry}
        \psi(a, b, c) = \psi(a, c, b).
     \end{equation}
    \end{enumerate}
The set of all triangle center functions is denoted by \(\cT\). Each \(\psi\in\cT\) defines a \emph{triangle center} of \((a,b,c)\in\triangle\) with trilinear  coordinates 
$$
\bX_{\psi} = [\psi(a,b,c) : \psi(b,c,a) : \psi(c,a,b)].
$$
\end{definition}
While Kimberling only considers positive $r$,  our definition also allows for negative $r$, as the degree of homogeneity of any triangle center function can be increased, for example, by multiplying by powers of $abc$. Additionally, note that the definition of a triangle center is equivariant under similarity transformations, \ie, under translations, rotations, reflections, and uniform scalings. That is, the center of the transformed triangle is the same point as the transformed center of the original triangle. To describe triangle center functions that correspond to the same triangle centers, we introduce: 
\begin{definition}\label{def:cycT}
We denote by $\cycT$ the subset of functions in $\cT$ that are nowhere vanishing, \ie, $f(a,b,c) \neq 0$ for all $(a,b,c)\in \triangle$ and \emph{invariant under cyclic permutations}, that is, $f(a,b,c) \in \cycT$ satisfies
$$
f(a,b,c) = f(b,c,a) = f(c,a,b).
$$
\end{definition}
The following \lemref{lem:equivCenterFunctions}, which is an immediate consequence of \defref{def:cycT}, clarifies when two triangle center functions represent the same triangle center, namely when they differ by a cyclic factor. 
\begin{lemma}\label{lem:equivCenterFunctions}
    Two triangle center functions $\psi_0(a,b,c)$ and $\psi_1(a,b,c)$ describe the same triangle center if 
    $$
   \psi_1(a,b,c) =  f(a,b,c)\, \psi_0(a,b,c),
    $$
    for some $f(a,b,c) \in \cycT$.
We will indicate that two triangle center functions $\psi_0$ and $\psi_1$ describe the same triangle center by $\psi_0 \cong \psi_1$. 
\end{lemma}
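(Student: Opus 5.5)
The plan is to compare the trilinear coordinates that \(\psi_0\) and \(\psi_1\) assign to the same triangle, and show they differ only by a single nonzero overall factor. Since homogeneous trilinear coordinates determine a point only up to such a factor, this gives the claim.

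Fix \((a,b,c)\in\triangle\). By \defref{def:TriangleCenter}, the center determined by \(\psi_1\) has coordinates
\[
\bX_{\psi_1}=[\psi_1(a,b,c):\psi_1(b,c,a):\psi_1(c,a,b)].
\]
Substitute \(\psi_1=f\,\psi_0\) entrywise. The first entry is \(f(a,b,c)\psi_0(a,b,c)\), the second is \(f(b,c,a)\psi_0(b,c,a)\), and the third is \(f(c,a,b)\psi_0(c,a,b)\). Because \(f\in\cycT\), \defref{def:cycT} gives \(f(a,b,c)=f(b,c,a)=f(c,a,b)\). So every entry carries the same factor \(k\coloneqq f(a,b,c)\), and
\[
\bX_{\psi_1}=[k\psi_0(a,b,c):k\psi_0(b,c,a):k\psi_0(c,a,b)].
\]
Here we use that \((b,c,a)\) and \((c,a,b)\) lie in \(\triangle\) whenever \((a,b,c)\) does, since the triangle inequalities are permutation invariant.

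Since \(f\) is nowhere vanishing on \(\triangle\), we have \(k\neq0\). Homogeneous trilinear coordinates are defined up to a nonzero scalar, and the conversion formula \(\bP=\frac{1}{ax_1+bx_2+cx_3}(ax_1\bA+bx_2\bB+cx_3\bC)\) is unchanged when every \(x_i\) is replaced by \(kx_i\), because the factor \(k\) cancels between numerator and denominator. Hence \(\bX_{\psi_1}=\bX_{\psi_0}\) as points.

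The only delicate step is this last one: we need \(k\neq0\) so the coordinate triple is not annihilated, which is exactly the nowhere-vanishing assumption in \defref{def:cycT}. Wherever \(\psi_0\) defines a point, so that \(ax_1+bx_2+cx_3\neq0\), the scaled denominator \(k(ax_1+bx_2+cx_3)\) is also nonzero. Therefore both functions give the same point, or both fail to define one.
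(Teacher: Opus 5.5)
Your proof is correct and matches the paper's argument: substitute $\psi_1 = f\,\psi_0$ entrywise, use the cyclic invariance of $f$ to pull out the common factor $f(a,b,c)$, and cancel it because $f$ is nowhere vanishing and trilinear coordinates are homogeneous. Your additional remarks are sound details that the paper leaves implicit. Cyclic permutations of $(a,b,c)$ stay in $\triangle$, and the normalizing denominator $ax_1+bx_2+cx_3$ vanishes for both functions or for neither.
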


\begin{remark}
Since \(abc\in\cycT\), we have \(\psi \cong abc\,\psi\) for all \(\psi\in\cT\). Thus, in \defref{def:TriangleCenter}, homogeneity is the essential requirement, whereas the particular degree can be shifted by multiplying by powers of \(abc\) and therefore has no geometric significance (on nondegenerate triangles).
\end{remark}

Triangle centers are tied together by several natural dualities, and among the most useful is the passage to an isogonal conjugate. In this setting, isogonal conjugation acts on the data defining a center by inverting the associated triangle center function, producing a new center that is paired with the original one.
\begin{definition}
    Given a triangle center $\bX_{\psi}$, its \emph{isogonal conjugate} $\bX_{\psi^{-1}}$ is a triangle center that corresponds to the triangle center function $\psi^{-1}(a,b,c) \coloneqq (\psi(a,b,c))^{-1}$. 
\end{definition}

The set of triangle center functions \(\cT\) is known to be a continuum. A catalog of more than 60,000 distinct, numbered triangle centers is available in the \emph{Encyclopedia of Triangle Centers}~\cite{Kimberling:2025:ETC}. A selection of centers addressed in the present work along with their isogonal conjugates, is presented in~\tblref{tab:trianglecenters}.

\begin{table}[t]
\centering
    \begin{tabular}{c | l | c | c | c}
        $\bX_i$ & Name & $\phi_i(a,b,c)$ & traceable & $\bX_i^{-1}$ \\
        \hline
        $\bX_1$ & Incenter & $1$ & yes & $\bX_1$\\
        $\bX_2$ & Centroid & $a^{-1}$ & yes &$\bX_6$\\
        $\bX_3$ & Circumcenter & $a(-a^2 + b^2+c^2)$ & yes &$\bX_4$\\
        $\bX_6$ & Symmedian point & $a$ & yes &$\bX_2$ \\
        $\bX_{11}$ & Feuerbach point & $bc(-a+b+c)(b-c)^2$ & no & $\bX_{59}$ \\
        $\bX_{13}$ & First isogonic center & $\csc\left(\alpha - \nicefrac{\pi}{3}\right)$ &yes & $\bX_{15}$ \\
        $\bX_{14}$ & Second isogonic center & $\csc\left(\alpha + \nicefrac{\pi}{3}\right)$ &yes & $\bX_{16}$\\
        $\bX_{15}$ & First isodynamic center & $\sin\left(\alpha - \nicefrac{\pi}{3}\right)$ &yes & $\bX_{13}$ \\
        $\bX_{16}$ & Second isodynamic center & $\sin\left(\alpha + \nicefrac{\pi}{3}\right)$ &yes & $\bX_{14}$\\
    \end{tabular}
    \caption{Selection of centers and their generating functions.}\label{tab:trianglecenters}
\end{table}

\paragraph{Convention} In this paper, we use $\psi_j$ to denote a generic triangle center function and $\phi_i$ to refer to the triangle center function corresponding to a specific, numbered triangle center (according to~\cite{Kimberling:2025:ETC}). That is, $\phi_i$ represents the triangle center function for the $i$-th triangle center $\bX_i$. For the isogonal conjugate of a triangle center $\bX_{\psi_i}$ we also abbreviate $\bX_{\psi_i}^{-1} \coloneqq \bX_{\psi_i^{-1}}$.

\subsection{Properties of triangle centers}

While the definition of triangle centers is broad and captures many desirable features, it does not impose strong constraints on triangle center functions. For instance, not all such functions yield triangle centers that have a well-defined representative in the Euclidean plane for all triangles, and some centers may coincide with others for multiple triangle configurations. In this section, we introduce terminology to rule out these pathologies.

\subsubsection{Traceable triangle centers}

While triangle center functions typically describe homogeneous coordinates, it is sometimes advantageous to work with triangle center functions in a normalized form. %To this end, we introduce:

\begin{definition}
    \label{def:NormalizationOfTriangleCenterFunction}
    Given a triangle center function \(\psi\in\cT\), its  \emph{trace} is given by 
    $$
    \nmlz{\psi}(a,b,c) = a\, \psi(a,b,c) +  b\, \psi(b,c,a) +  c\, \psi(c,a,b). 
    $$
    We say that the triangle center function \(\psi\in\cT\) is \emph{traceable} if its trace is nowhere vanishing, \ie, \(\nmlz{\psi}(a,b,c)\neq 0\) for all \((a,b,c)\in\triangle\),  and \emph{normalized} if $\nmlz{\psi}(a,b,c) \equiv 1$. Moreover, a triangle center is said to be traceable whenever its defining triangle center function is.  
\end{definition}

It follows from the definition of triangle centers that traceable triangle centers are those that correspond to well-defined points in \(\RR^2\) for all $\Delta \in \triangle$.  %\OG{non-degeneracy is assumed in the definition of \(\triangle\)}. 
Examples of such centers are listed\footnote{The computations used to determine traceability and non-traceability are provided in \appref{sec:app:section2}, \exref{ex:traceableTriangleCenters} and \exref{ex:nottraceableTriangleCenters}.} in \tblref{tab:trianglecenters}. Notably, the Feuerbach point $\bX_{11}$ is not traceable, since $\nmlz{\phi_{11}}(a,a,a) = 0$.

Finally, we note that the well-definedness of \defref{def:NormalizationOfTriangleCenterFunction} follows from the properties outlined in the following proposition, which are direct consequences of the properties of triangle center functions. 

\begin{proposition}
    \label{lem:propNormalization}
    Let $\psi \in \cT$ be a triangle center function of degree \(k\geq 0\) and $f\in\cycT$. Then, 
    \begin{enumerate}
         \item \(
         \nmlz{\psi}(a,b,c) = \nmlz{\psi}(b,c,a) = \nmlz{\psi}(c,a,b)\),
         \item \(\nmlz{\psi}(a,b,c) = \nmlz{\psi}(a,c,b)\),
         \item \(\nmlz{\psi}(ta,tb,tc) = t^{k+1} \,\nmlz{\psi}(a,b,c)\),
         \item \(\nmlz{f \psi}(a,b,c) = f(a,b,c)\, \nmlz{\psi}(a,b,c).\)
     \end{enumerate}
That is, $\nmlz{\psi} \in \cT$ is well defined on the equivalence classes of triangle center functions which describe the same triangle centers and homogeneous of  $\deg(\nmlz{\psi}) = k+1$.
\end{proposition}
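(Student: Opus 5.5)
The plan is to verify each of the four items directly from the definition of the trace, $\nmlz{\psi}(a,b,c) = a\,\psi(a,b,c) + b\,\psi(b,c,a) + c\,\psi(c,a,b)$. I will use only the two axioms of \defref{def:TriangleCenter} (homogeneity and bi-symmetry) and the cyclic invariance of $f\in\cycT$ from \defref{def:cycT}. No earlier result beyond these definitions is needed.

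For (i), I substitute $(b,c,a)$ for $(a,b,c)$ and obtain $b\,\psi(b,c,a)+c\,\psi(c,a,b)+a\,\psi(a,b,c)$. This is the same three summands cyclically reordered, so it equals $\nmlz{\psi}(a,b,c)$. Applying the same step twice gives the identity for $(c,a,b)$.

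For (ii), I compute $\nmlz{\psi}(a,c,b) = a\,\psi(a,c,b) + c\,\psi(c,b,a) + b\,\psi(b,a,c)$ and apply bi-symmetry to each term: $\psi(a,c,b)=\psi(a,b,c)$, $\psi(c,b,a)=\psi(c,a,b)$, and $\psi(b,a,c)=\psi(b,c,a)$. The result is the original three summands. The only point needing care is matching each swapped argument triple to the correct cyclic one.

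For (iii), each summand has the form $(ta)\,\psi(ta,tb,tc)=t\cdot t^k\,a\,\psi(a,b,c)$, and likewise for the cyclic shifts, so $\nmlz{\psi}(ta,tb,tc)=t^{k+1}\nmlz{\psi}(a,b,c)$. The hypothesis $k\ge 0$ plays no role here; the computation works for any real degree.

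For (iv), $f(b,c,a)=f(c,a,b)=f(a,b,c)$, so the common factor $f(a,b,c)$ pulls out of all three summands.

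Finally, I draw the conclusions. Items (ii) and (iii) say that $\nmlz{\psi}$ is bi-symmetric and homogeneous of degree $k+1$. It is also not identically zero whenever $\psi$ is traceable, which is the case where the trace is used, so $\nmlz{\psi}\in\cT$. By (iv) together with \lemref{lem:equivCenterFunctions}, replacing $\psi$ by an equivalent $f\psi$ multiplies the trace by the nowhere-vanishing $f$. Hence the nonvanishing of the trace, i.e.\ traceability, is a property of the center rather than of the representative. The resulting normalized barycentric coordinates $a\psi(a,b,c)/\nmlz{\psi}$ are also unchanged, since the factor $f$ cancels. There is no real obstacle; the only subtle point is the index bookkeeping in (ii).
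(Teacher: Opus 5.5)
Your proof is correct and takes the same approach as the paper, which simply asserts these four identities as direct consequences of homogeneity, bi-symmetry, and the cyclic invariance of $f$. Your side remarks are also accurate: the hypothesis $k\geq 0$ is not needed for (iii), and the conclusion $\nmlz{\psi}\in\cT$ requires $\nmlz{\psi}\not\equiv 0$ (for instance, traceability), a condition the paper's statement leaves implicit.
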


Note that, if $\psi$ is traceable, $\nmlz{\psi} \in \cycT$. That is, any traceable triangle center function \(\psi\in\cT\) can be normalized by 
\begin{equation*}
    \psi\mapsto\tfrac{1}{\nmlz{\psi}}\psi.
\end{equation*}

With the above properties, the following Proposition is easy to verify. 
\begin{proposition}
    Normalized triangle center functions have homogeneity degree -1.
\end{proposition}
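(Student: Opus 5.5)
The plan is to use part (iii) of \propref{lem:propNormalization}, which says how the trace scales, together with the defining property $\nmlz{\psi}\equiv 1$ of a normalized function.

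Let $\psi\in\cT$ be normalized, with homogeneity degree $k=\deg(\psi)$, so $\psi(ta,tb,tc)=t^k\psi(a,b,c)$ for all $t>0$. By definition of the trace,
\[
\nmlz{\psi}(ta,tb,tc)=ta\,\psi(ta,tb,tc)+tb\,\psi(tb,tc,ta)+tc\,\psi(tc,ta,tb)=t^{k+1}\nmlz{\psi}(a,b,c).
\]
This is part (iii) of \propref{lem:propNormalization}. The proposition states it for $k\ge 0$, but the computation uses only homogeneity, so it holds for every real $k$. Since $\psi$ is normalized, both sides equal $1$. Hence $t^{k+1}=1$ for all $t>0$, and taking $t=2$ gives $k+1=0$, that is, $k=-1$.

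The only subtle point is that a triangle center function does not determine its degree uniquely if it vanishes identically. Here this cannot happen: $\psi\not\equiv 0$ by \defref{def:TriangleCenter}, so $r$ is uniquely determined. Alternatively, normalization itself forces $\psi\not\equiv 0$, since otherwise the trace would vanish.

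It is also worth recording the constructive viewpoint. If $\psi$ is traceable of degree $k$, then $\frac{1}{\nmlz{\psi}}\psi$ has degree $k-(k+1)=-1$. This is consistent with the claim, but the argument above covers any normalized $\psi$ directly, and there is no real obstacle.
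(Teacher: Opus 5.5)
Your proof is correct and takes the same route the paper intends: the paper only remarks that the statement is ``easy to verify'' from the scaling property (iii) of \propref{lem:propNormalization} together with $\nmlz{\psi}\equiv 1$, which forces $k+1=0$. You were right to note that the computation behind (iii) does not need the hypothesis $k\geq 0$. As stated, (iii) would not cover a function whose degree turns out to be $-1$, so rederiving it for arbitrary $k$ is what makes the argument valid.
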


\subsubsection{Essentially different triangle centers}\label{sec:essentiallDifferentTriangleCenters}
For an equilateral triangle with side lengths \(a>0\), the trace of any triangle center function \(\psi\in\cT\) simplifies to 
$\nmlz{\psi}(a,a,a) = 3 a \psi(a,a,a)$. If $\psi(a,a,a) \neq 0$, which is the case for traceable triangle centers, the triangle center has barycentric coordinates
\[\bX_\psi = \begin{bmatrix}
    a\,\psi(a,a,a)\\ a\,\psi(a,a,a)\\ a\,\psi(a,a,a)
\end{bmatrix} \cong \begin{bmatrix}1\\ 1\\ 1\end{bmatrix} = \bX_2.\]
So, all traceable triangle centers coincide with the centroid of the triangle in equilateral triangles. Since this coincidence constitutes a degenerate scenario, we will mainly focus our attention on triangle centers that do not coincide, or are \emph{different}, for all other triangles.

\begin{definition}
    We call two triangle center functions $\psi_0, \psi_1\in\cT$ \emph{essentially different}, if the corresponding triangle centers coincide only in equilateral triangles.
\end{definition}

For example, in a right-angled isosceles triangle, the triangle centers $\bX_3$ and $\bX_{63}$ coincide at the midpoint of the hypotenuse. Consequently, the triangle centers are therefore not essentially different (see~\secref{sec:app:section2}, \exref{ex:essentiallyDifferent}). The following lemma provides pairs of essentially different triangle centers that are considered in later sections of this paper. 

\begin{lemma}\label{lem:essentiallDifferent}~
    \begin{enumerate}
        \item The triangle centers $\bX_3$ and $\bX_6$ 
        are essentially different.
        \item The triangle centers $\bX_{13}$, $\bX_{14}$, $\bX_{15}$, and $\bX_{16}$ are essentially different from the centroid $\bX_2$.
    \end{enumerate}
\end{lemma}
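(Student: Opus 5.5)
The approach is to compare the two centers in barycentric coordinates. Two traceable centers coincide exactly when their barycentric coordinate vectors are proportional, that is, when all $2\times 2$ minors of the $2\times 3$ matrix formed by the two vectors vanish. By cyclic symmetry it suffices to analyse one minor and then combine the resulting conditions over the cyclic permutations.

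For (i), the barycentrics are
\[
\bX_3=[a^2(-a^2+b^2+c^2):\dots]
\quad\text{and}\quad
\bX_6=[a^2:b^2:c^2].
\]
Proportionality means that $-a^2+b^2+c^2$, $a^2-b^2+c^2$ and $a^2+b^2-c^2$ are all equal, since the factors $a^2,b^2,c^2$ are nonzero. Subtracting the first two gives $a^2=b^2$, and subtracting the last two gives $b^2=c^2$. Because side lengths are positive, the triangle is equilateral.

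For (ii), the centroid is $[1:1:1]$ in barycentrics. So $\bX_\psi$ coincides with $\bX_2$ if and only if
\[
a\psi(a,b,c)=b\psi(b,c,a)=c\psi(c,a,b).
\]
I would rewrite this using the law of sines, $a\propto\sin\alpha$, so the condition becomes $\sin\alpha\,\psi=\sin\beta\,\psi'=\sin\gamma\,\psi''$, with the angle-form functions from \tblref{tab:trianglecenters}.

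For $\bX_{15}$ we need $g(\alpha)=\sin\alpha\sin(\alpha-\pi/3)$ to take the same value at $\alpha,\beta,\gamma$. The product-to-sum formula gives
\[
g(\alpha)=\tfrac12\bigl(\cos(\pi/3)-\cos(2\alpha-\pi/3)\bigr).
\]
Equality $g(\alpha)=g(\beta)$ then means $\cos(2\alpha-\pi/3)=\cos(2\beta-\pi/3)$, which forces either $\alpha=\beta$ or $\alpha+\beta=\pi/3+k\pi$. With $\alpha,\beta\in(0,\pi)$ and $\alpha+\beta<\pi$, the second option reduces to $\alpha+\beta=\pi/3$, i.e.\ $\gamma=2\pi/3$.

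The same analysis holds for each pair of angles. I would then rule out the mixed cases. If two angles are equal, say $\alpha=\beta$, and the pair $(\alpha,\gamma)$ satisfies the second option, then $\alpha+\gamma=\pi/3$ forces $\beta=2\pi/3$. That gives $\alpha=2\pi/3$ too, so the angle sum exceeds $\pi$, a contradiction. Two pairs both satisfying the second option are impossible for the same reason. Hence all three angles are equal.

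For $\bX_{16}$ the same computation uses $\cos(2\alpha+\pi/3)$, and the exceptional case becomes $\alpha+\beta=2\pi/3+k\pi$, i.e.\ $\alpha+\beta=2\pi/3$ and $\gamma=\pi/3$. This is excluded in the same way: combined with the equality of the other pair it pushes all angles to $\pi/3$. For $\bX_{13}$ and $\bX_{14}$ the condition is that $\sin\alpha/\sin(\alpha\mp\pi/3)$ is the same at all three angles. Equality of two such ratios is, after cross-multiplying, a sine-difference identity $\sin(\alpha-\beta)\sin(\pm\pi/3)=0$ up to sign, so it directly gives $\alpha=\beta$. These two cases are therefore easier.

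I expect the main obstacle to be the case analysis for the isodynamic centers $\bX_{15},\bX_{16}$. There the quadratic-in-sine condition admits non-equilateral pairwise solutions, and the constraint $\alpha+\beta+\gamma=\pi$ has to be used carefully to exclude them. Special care is needed at boundary points such as an angle of $2\pi/3$, where a singular factor could vanish. I would double-check that no triangle with an angle of $2\pi/3$ satisfies all three equalities simultaneously.
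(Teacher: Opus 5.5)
Your argument is correct, and it takes a genuinely different route from the paper.

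\textbf{The paper's route.} The proof is purely computational. It equates the normalized functions $\psi_i/\nmlz{\psi_i}$ at the three cyclic permutations and notes that the system has rank two. It then solves for $b$ and $c$ in Mathematica and checks that every solution is either equilateral or not a triangle.

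\textbf{Your route.} You give a hand-checkable argument. In (i), proportionality of $[a^2(-a^2+b^2+c^2):\dots]$ and $[a^2:b^2:c^2]$ collapses to $a^2=b^2=c^2$. In (ii), the law of sines turns coincidence with $\bX_2$ into equality of $\sin\alpha\sin(\alpha\mp\pi/3)$, resp.\ $\sin\alpha/\sin(\alpha\mp\pi/3)$, at the three angles. You settle this with product-to-sum formulas and $\alpha+\beta+\gamma=\pi$.

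\textbf{Comparison.} Your approach exposes structure that the notebook hides. For the isogonic centers, the pairwise condition already forces $\alpha=\beta$. For the isodynamic centers, the pairwise condition has genuine non-equilateral solutions (third angle $2\pi/3$, resp.\ $\pi/3$), which disappear only when all three equalities are imposed. Your case analysis of this is sound. A pair of \emph{sum} type pins the remaining angle at $2\pi/3$, resp.\ $\pi/3$. The two pairs containing that angle are then either impossible or force all angles to equal $\pi/3$. The paper's approach buys uniformity: it applies verbatim to any pair of centers given algebraically in $a,b,c$. The cost is that it cannot be verified from the text alone.

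\textbf{Two small remarks.} The boundary case you flag for $\bX_{13},\bX_{14}$ is harmless. If $\sin(\alpha\mp\pi/3)=0$ in a non-equilateral triangle, the cleared-denominator barycentrics $[\sin\alpha\sin(\beta\mp\pi/3)\sin(\gamma\mp\pi/3):\dots]$ reduce to $[1:0:0]$. That is the vertex $\bA$, which is never the centroid. Also, your assignment of $\sin(\alpha\mp\pi/3)$ to $\bX_{15},\bX_{16}$ follows \tblref{tab:trianglecenters}, which is sign-swapped relative to \teqref{eqn:phi15abc}. This does not matter, since you treat all four functions.
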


\subsection{Collinear triangle centers}\label{sec:collinear}

\begin{figure}
    \begin{footnotesize}    
    \begin{overpic}[width=\textwidth]{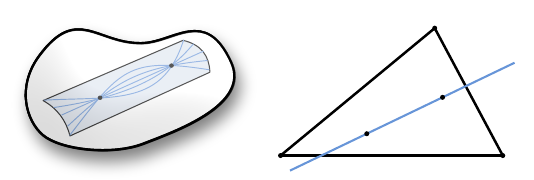}
    \put(38,9){$\cT$}
    \put(18,13.){$\psi_0$}
    \put(32,19.5){$\psi_1$}
    \put(8.5,22.25){$\spann_{\cycT}(\psi_0,\psi_1)$}
    \put(51,1.75){$\bA$}
    \put(94,1.75){$\bB$}
    \put(81,29.5){$\bC$}
    \put(66.5,10){$\bX_{\psi_0}$}
    \put(80.5,16.75){$\bX_{\psi_1}$}
    \put(72,8.75){$\spann_{\cycT}(\bX_{\psi_0}, \bX_{\psi_1})$}%
    \end{overpic}%
    \end{footnotesize}%
   \caption{Illustration of the space of triangle functions. }\label{fig:spacecenters}
\end{figure}

The study of triangle centers also examines their relationships, such as collinearity.
 Recall that three triangle centers $\bX_{\psi_0},\bX_{\psi_1},\bX_{\psi_2}$ are collinear, if for all $(a,b,c)\in \triangle$
 \begin{equation}\label{eqn:colX}
 \det\begin{pmatrix}
     \bX_{\psi_0} & \bX_{\psi_1} & \bX_{\psi_2} \\
     1 & 1 & 1
 \end{pmatrix} = 0.
 \end{equation}
Consequently, for triangles $\Delta\in\triangle$ for which $\nmlz{\psi_0}, \nmlz{\psi_1}, \nmlz{\psi_2}\neq 0$,  we define
$$
\bM_{\Delta} = \begin{pmatrix}
    \bA &
    \bB & \bC \\
    1 & 1 & 1
\end{pmatrix}
\quad \text{and} \quad 
\bM_{\psi} = \begin{pmatrix}
    \nicefrac{a \,\psi_0(a,b,c)}{\nmlz{\psi_0}} & \nicefrac{a\, \psi_1(a,b,c)}{\nmlz{\psi_1}} & \nicefrac{a \,\psi_2(a,b,c)}{\nmlz{\psi_2}} \\
    \nicefrac{b \,\psi_0(b,c,a)}{\nmlz{\psi_0}} & \nicefrac{b\, \psi_1(b,c,a)}{\nmlz{\psi_1}} & \nicefrac{b \,\psi_2(b,c,a)}{\nmlz{\psi_2}} \\
    \nicefrac{c \,\psi_0(c,a,b)}{\nmlz{\psi_0}} & \nicefrac{ c \,\psi_1(c,a,b)}{\nmlz{\psi_1}}& \nicefrac{c \,\psi_2(c,a,b)}{\nmlz{\psi_2}} 
\end{pmatrix}, 
$$
and rewrite 
$$
\det\begin{pmatrix}
     \bX_{\psi_0} & \bX_{\psi_1} & \bX_{\psi_2} \\
     1 & 1 & 1
 \end{pmatrix} =
\det \left(\bM_{\Delta} \cdot \bM_{\psi}\right) = \det \left(\bM_{\Delta} \right) \, \det \left(\bM_{\psi}\right).
$$
Since the vertices of a triangle \(\Delta\in\triangle\) are in general position, $\det(\bM_{\triangle}) \neq 0$, Equation~\eqref{eqn:colX} is equivalent to $\det(\bM_{\psi}) = 0$. Then, due to the multilinearity of the determinant, we may simplify 
 $$
 \det(\bM_{\psi})= \frac{a b c}{\nmlz{\psi_0}\nmlz{\psi_1}\nmlz{\psi_2}}
 \det \begin{pmatrix}
     \psi_0(a,b,c) &  \psi_1(a,b,c) & \psi_2(a,b,c) \\
     \psi_0(b,c,a) &  \psi_1(b,c,a) & \psi_2(b,c,a) \\
     \psi_0(c,a,b) & \psi_1(c,a,b) & \psi_2(c,a,b) 
\end{pmatrix}.
 $$

As we focus on non-degenerate triangles, a necessary and sufficient condition for the collinearity of three triangle centers is the linear dependence of triples corresponding to triangle center functions evaluated at cyclic permutation triangle edge lengths.

 \subsubsection{Cyclic-affine triangle center combinations}\label{sec:CyclicAffineTriangleCenterCombinations}
We now formalize a simple but flexible notion of linear dependence among triangle center functions, where the coefficients are allowed to vary cyclically with $(a,b,c)$.

\begin{definition}\label{lem:cyclicaffinecombination}
Let $\psi_0, \psi_1 \in \cT$ be two triangle center functions and $\omega_0,\omega_1\in\cycT$  such that $\deg(\omega_0\psi_0) = \deg(\omega_1\psi_1)$. We refer to the triangle center function 
\begin{equation}\label{eqn:cyclicaffine}
\psi_{\bomega}(a,b,c) = \omega_0(a,b,c)\, \psi_0(a,b,c) + \omega_1(a,b,c)\, \psi_1(a,b,c),
\end{equation}
as a \emph{cyclic-affine combination of} $\psi_0$ and $\psi_1$.
We denote the set of cyclic-affine combinations of $\psi_0$ and $\psi_1$ by $\spann_{\cycT}(\psi_0,\psi_1)$.
\end{definition}

It is straightforward to verify that $\psi_{\bomega}$ in \teqref{eqn:cyclicaffine} is a triangle center function, and therefore is well defined.

\begin{remark}
Note that even if the two triangle center functions  in \lemref{lem:cyclicaffinecombination} are traceable, the corresponding cyclic-affine combination need not be. In particular, for normalized triangle center functions $\psi_0$ and $\psi_1$, $\omega_0\in\cycT$, and $\omega_1 = -\omega_0$, we have $\Sigma_{\psi_{\bomega}}(a,b,c) = 0$ for all $\Delta \in\triangle$.
\end{remark}

The next lemma shows that cyclic-affine combinations behave like genuine affine combinations at the level of points: whenever the combination is traceable, the resulting center lies on the line through $\bX_{\psi_0}$ and $\bX_{\psi_1}$, with weights determined by the cyclic coefficients, see \figref{fig:spacecenters}.

\begin{lemma}\label{lem:cycaffineCenter}
Let $\psi_{\bomega} \in \spann_{\cycT}(\psi_0,\psi_1)$ be traceable. Then, the associated triangle centers are collinear and given by 
\begin{equation}\label{eqn:x_omega}
\bX_{\psi_{\bomega}} = \frac{\omega_0\nmlz{\psi_0}}{\omega_0\nmlz{\psi_0}+ \omega_1\nmlz{\psi_1}} \bX_{\psi_0} +  \frac{\omega_1 \nmlz{\psi_1}}{\omega_0\nmlz{\psi_0}+ \omega_1\nmlz{\psi_1}} \bX_{\psi_1}.
\end{equation}
In addition, we have that
 $$
    \psi_{\bomega} \in \spann_{\cycT}(\psi_0, \psi_1) \quad \Longrightarrow \quad 
    \psi_{1} \in \spann_{\cycT}(\psi_0, \psi_{\bomega}).
    $$ 
 \end{lemma}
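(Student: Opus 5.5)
The plan is to compute directly in normalized barycentric coordinates. For a traceable $\psi$, the center $\bX_\psi$ has normalized barycentric coordinates
\[
\frac{1}{\nmlz{\psi}}\begin{bmatrix} a\,\psi(a,b,c)\\ b\,\psi(b,c,a)\\ c\,\psi(c,a,b)\end{bmatrix},
\]
so that $\bX_\psi = \frac{1}{\nmlz{\psi}}\left(a\psi(a,b,c)\bA + b\psi(b,c,a)\bB + c\psi(c,a,b)\bC\right)$.

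\textbf{Step 1: the trace of the combination.} Because $\omega_0,\omega_1\in\cycT$ are invariant under cyclic permutations, evaluating $\psi_{\bomega}$ at $(b,c,a)$ or $(c,a,b)$ gives $\omega_0\psi_0(b,c,a)+\omega_1\psi_1(b,c,a)$, and similarly for the third permutation. Hence the coefficients factor out of every entry. Linearity of the trace, together with \propref{lem:propNormalization}(iv), then yields
\[
\nmlz{\psi_{\bomega}} = \omega_0\nmlz{\psi_0}+\omega_1\nmlz{\psi_1}.
\]
By the traceability assumption this quantity is nowhere zero.

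\textbf{Step 2: the point formula.} Write the unnormalized point sum $a\psi_{\bomega}(a,b,c)\bA+\dots$ as $\omega_0$ times the sum for $\psi_0$ plus $\omega_1$ times the sum for $\psi_1$. The sum for $\psi_i$ equals $\nmlz{\psi_i}\bX_{\psi_i}$, but only when $\nmlz{\psi_i}\neq0$. Dividing by $\nmlz{\psi_{\bomega}}$ gives \teqref{eqn:x_omega}. The two weights sum to one, so $\bX_{\psi_{\bomega}}$ is an affine combination of $\bX_{\psi_0}$ and $\bX_{\psi_1}$ and therefore lies on the line through them, which is the collinearity claim.

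\textbf{Main obstacle: when $\psi_0$ or $\psi_1$ is not traceable.} The statement only assumes that $\psi_{\bomega}$ is traceable. If some $\nmlz{\psi_i}$ vanishes at a given triangle, $\bX_{\psi_i}$ is a point at infinity there, and the formula must be read with the product $\nmlz{\psi_i}\bX_{\psi_i}$ interpreted as the unnormalized vector. To keep the collinearity claim clean I would prove it via the determinant criterion of \secref{sec:collinear} instead. The matrix of values of $\psi_0,\psi_1,\psi_{\bomega}$ at the three cyclic permutations has third column equal to $\omega_0$ times the first plus $\omega_1$ times the second. Its determinant therefore vanishes identically, and this holds for all triangles.

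\textbf{Step 3: the reverse inclusion.} Solve the defining identity for $\psi_1$:
\[
\psi_1 = \omega_1^{-1}\psi_{\bomega} - \omega_1^{-1}\omega_0\,\psi_0 .
\]
Here $\omega_1\in\cycT$ is nowhere vanishing, so $\omega_1^{-1}$ and $-\omega_0\omega_1^{-1}$ lie in $\cycT$. They are homogeneous, bi-symmetric, cyclically invariant and nonzero, using that products and inverses of such functions keep these properties. The degree condition follows from $\deg(\omega_0\psi_0)=\deg(\omega_1\psi_1)=\deg\psi_{\bomega}$. Indeed, $\deg(\omega_1^{-1}\psi_{\bomega})=\deg\psi_1$ and $\deg(\omega_0\omega_1^{-1}\psi_0)=\deg\psi_1$. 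Hence $\psi_1\in\spann_{\cycT}(\psi_0,\psi_{\bomega})$.
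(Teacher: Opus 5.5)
Your proposal is correct and follows the paper's proof essentially step for step. Like the paper, you derive the trace identity $\nmlz{\psi_{\bomega}}=\omega_0\nmlz{\psi_0}+\omega_1\nmlz{\psi_1}$ from the cyclic invariance of the coefficients, obtain the point formula with weights summing to one, and solve the defining identity for $\psi_1$ with the same degree bookkeeping. Your determinant argument covering vanishing $\nmlz{\psi_0}$ or $\nmlz{\psi_1}$ is a harmless refinement of a case the paper leaves implicit, since it tacitly treats $\bX_{\psi_0}$ and $\bX_{\psi_1}$ as finite points.
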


 Note, however, that somewhat counterintuitively $\psi_0, \psi_1 \not\in \spann_{\cycT}(\psi_0, \psi_1)$ since, by \defref{def:cycT}, $\omega_0, \omega_1 \not\equiv 0$.
Consequently, $\spann_{\cycT}(\psi_0, \psi_1)  \neq \spann_{\cycT}(\psi_0, \psi_{\bomega})$.

A key point is that this construction depends only on the underlying triangle centers, not on how we choose to represent them. In particular, rescaling $\psi_0$ and $\psi_1$ by cyclic factors does not change the family of cyclic-affine combinations they generate.
 \begin{lemma}\label{lem:cycAffineIndependence}
The set of cyclic-affine combinations of two triangle centers functions $\psi_0$ and $\psi_1$ is independent of the choice of their representatives. That is, 
$$
\spann_{\cycT}(f_0 \psi_0, f_1 \psi_1) = \spann_{\cycT}(\psi_0,\psi_1),
$$
where $f_0,f_1\in\cycT$.
Consequently, we may also refer to the set of triangle centers $\bX_{\psi_{\bomega}}$ corresponding to cyclic-affine triangle center functions of $\psi_0$ and $\psi_1$ as $\spann_{\cycT}(\bX_{\psi_0}, \bX_{\psi_1})$. 
\end{lemma}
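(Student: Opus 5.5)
We prove the set equality by double inclusion. The key observation is that $\cycT$ is closed under pointwise multiplication and under taking reciprocals. If $f,g\in\cycT$, then $fg$ and $1/f$ are nowhere vanishing and invariant under cyclic permutations. Both are also homogeneous, of degrees $\deg f+\deg g$ and $-\deg f$. Both are bi-symmetric, since $f(a,b,c)=f(a,c,b)$ and $g(a,b,c)=g(a,c,b)$ give the same identity for $fg$ and $1/f$. Hence $\cycT$ is an abelian group under multiplication, and this is all we need.

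For the inclusion ``$\subseteq$'', take $\psi\in\spann_{\cycT}(f_0\psi_0,f_1\psi_1)$. Then
\[
\psi=\omega_0 f_0\psi_0+\omega_1 f_1\psi_1
\]
with $\omega_0,\omega_1\in\cycT$ and $\deg(\omega_0f_0\psi_0)=\deg(\omega_1f_1\psi_1)$. Set $\tilde\omega_i\coloneqq\omega_if_i$. By the closure property, $\tilde\omega_i\in\cycT$. The degree condition $\deg(\tilde\omega_0\psi_0)=\deg(\tilde\omega_1\psi_1)$ is literally the same condition as before. Hence $\psi=\tilde\omega_0\psi_0+\tilde\omega_1\psi_1\in\spann_{\cycT}(\psi_0,\psi_1)$.

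For ``$\supseteq$'', write $\psi=\omega_0\psi_0+\omega_1\psi_1$ and set $\tilde\omega_i\coloneqq\omega_i f_i^{-1}\in\cycT$. Then
\[
\psi=\tilde\omega_0(f_0\psi_0)+\tilde\omega_1(f_1\psi_1),
\]
and the degrees match because $\tilde\omega_i f_i\psi_i=\omega_i\psi_i$. Alternatively, this direction follows from the first by symmetry: apply it to $f_i\psi_i$ with the factors $f_i^{-1}$.

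The consequence about triangle centers is then immediate. By \lemref{lem:equivCenterFunctions}, any two representatives of $\bX_{\psi_0}$ and $\bX_{\psi_1}$ differ by cyclic factors $f_0,f_1$. The set equality shows that the set of cyclic-affine combinations, and hence the set of associated centers $\bX_{\psi_{\bomega}}$, does not depend on this choice. So $\spann_{\cycT}(\bX_{\psi_0},\bX_{\psi_1})$ is well defined.

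The only step needing care is checking that products and reciprocals stay inside $\cycT$. Specifically, each must still be a triangle center function: homogeneous, bi-symmetric, and not identically zero. Nonvanishing guarantees this, so no real obstacle arises.
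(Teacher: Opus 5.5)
Your proof is correct and follows the paper's argument: the paper likewise absorbs the cyclic factor into the coefficient, writing $\omega_1\psi_1 = (\omega_1/f)(f\psi_1)$ with $\omega_1/f\in\cycT$, and notes that the degree condition is unchanged. You treat both factors and both inclusions at once, and you make explicit the closure of $\cycT$ under products and reciprocals, which the paper uses without comment.
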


 It is known that the line connecting $\bX_3$ and $\bX_6$, the so-called \emph{Brocard axis}, contains the first isodynamic center $\bX_{15}$. \exref{ex:X15inBrocard} in \appref{sec:app:section2} illustrates $\bX_{15}$ as a cyclic-affine combination of $\bX_3$ and $\bX_6$.

Finally, we discuss under which conditions $\spann_{\cycT}(\bX_{\psi_0}, \bX_{\psi_1})$ contains all  triangle centers that are essentially different from $\bX_{\psi_0}$ and $\bX_{\psi_1}$.

\begin{lemma}\label{lem:triangleLineCycAffine}
If two distinct triangle centers $\bX_{\psi_0}$ and $\bX_{\psi_1}$ are both traceable, then $\spann_{\cycT}(\bX_{\psi_0}, \bX_{\psi_1})$ contains all traceable triangle centers that are collinear with $\bX_{\psi_0}$ and $\bX_{\psi_1}$ and are essentially different from these two centers.    
\end{lemma}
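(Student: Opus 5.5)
Let $\psi_2\in\cT$ be traceable, with $\bX_{\psi_2}$ collinear with $\bX_{\psi_0}$ and $\bX_{\psi_1}$ and essentially different from both. The plan is to produce $\omega_0,\omega_1\in\cycT$ with $\psi_2\cong\omega_0\psi_0+\omega_1\psi_1$. By \lemref{lem:cycAffineIndependence} and \lemref{lem:equivCenterFunctions} we may replace $\psi_0,\psi_1,\psi_2$ by their normalized versions $\psi_i/\nmlz{\psi_i}$; this is legitimate since all three are traceable, so $\nmlz{\psi_i}\in\cycT$. Then $\bX_{\psi_i}$ has normalized barycentric coordinates $(a\psi_i(a,b,c),\,b\psi_i(b,c,a),\,c\psi_i(c,a,b))^T$, and all three normalized functions have degree $-1$.

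\emph{Pointwise coefficients.} Fix a non-equilateral triangle $\Delta$. Since $\bX_{\psi_0}\neq\bX_{\psi_1}$ and the three points are collinear, there is a unique $t=t(a,b,c)\in\RR$ with $\bX_{\psi_2}=(1-t)\bX_{\psi_0}+t\bX_{\psi_1}$. Because the barycentric coordinates are normalized, this reads coordinatewise as
\[
\psi_2(a,b,c)=(1-t)\,\psi_0(a,b,c)+t\,\psi_1(a,b,c),
\]
together with the two cyclically permuted equations, all with the same $t$. We set $\omega_0=1-t$ and $\omega_1=t$. Essential difference gives $\bX_{\psi_2}\neq\bX_{\psi_0}$ and $\bX_{\psi_2}\neq\bX_{\psi_1}$, so $t\neq 1$ and $t\neq0$. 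Hence $\omega_0$ and $\omega_1$ are nowhere vanishing off equilateral triangles.

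\emph{Properties of $\omega_0,\omega_1$.} We must show $\omega_0,\omega_1\in\cycT$ and that the degrees match.
\begin{itemize}
\item \emph{Cyclic invariance.} The point $\bX_{\psi_2}$, and the two points it is interpolated between, do not depend on which vertex we call $\bA$. Relabelling $(a,b,c)\mapsto(b,c,a)$ permutes the coordinates but keeps the same three points, so by uniqueness $t(b,c,a)=t(a,b,c)$.
\item \emph{Bi-symmetry.} Bi-symmetry of each $\psi_i$ means the reflected labelling $(a,c,b)$ gives the same points, so $t(a,c,b)=t(a,b,c)$.
\item \emph{Homogeneity.} Uniform scaling is a similarity, and centers are equivariant under similarities, so the affine ratio $t$ is unchanged. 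Thus $t$ is homogeneous of degree $0$.
\end{itemize}
Consequently $\deg(\omega_0\psi_0)=\deg(\omega_1\psi_1)=-1$. For an explicit formula one can solve $t$ as a ratio of $2\times2$ determinants, for instance $t=\frac{\psi_2(a,b,c)-\psi_0(a,b,c)}{\psi_1(a,b,c)-\psi_0(a,b,c)}$ at any coordinate where the denominator is nonzero. Some coordinate has a nonzero denominator because $\bX_{\psi_0}\neq\bX_{\psi_1}$.

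\emph{The equilateral case (main obstacle).} Here all traceable centers coincide with $\bX_2$, so $t$ is not determined by the points, and \defref{def:cycT} demands that $\omega_0,\omega_1$ be nowhere vanishing on all of $\triangle$. The first attempt is to define $t$ on equilateral triangles freely, say as any constant $t_0\notin\{0,1\}$. The identity $\psi_2=(1-t_0)\psi_0+t_0\psi_1$ then still holds, since all normalized functions there equal $1/(3a)$. This choice preserves cyclic invariance, bi-symmetry and degree-$0$ homogeneity, because the equilateral triangles form a set closed under these operations. Since the paper's definitions impose no continuity on triangle center functions, this suffices. If continuity were wanted, one would instead try to show that the limit of $t$ at equilateral triangles exists and avoids $0$ and $1$; I expect that this cannot be guaranteed in general, which is why the discontinuous extension is the intended route.

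Finally, the function $\omega_0\psi_0+\omega_1\psi_1$ equals $\psi_2$ in normalized form, so $\bX_{\psi_2}\in\spann_{\cycT}(\bX_{\psi_0},\bX_{\psi_1})$.
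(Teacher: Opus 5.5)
Your proof is correct and follows essentially the same route as the paper. You take the pointwise affine ratio $t$ (the paper's $f$), show it is cyclic, bi-symmetric and of degree $0$, and so write the normalized $\psi_2$ as $(1-t)\,\psi_0/\nmlz{\psi_0}+t\,\psi_1/\nmlz{\psi_1}$. You are more careful than the paper in checking $t\neq 1$, which is needed for $\omega_0\in\cycT$, and in extending $t$ to equilateral triangles.

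Both you and the paper tacitly use $\bX_{\psi_0}\neq\bX_{\psi_1}$ at every non-equilateral triangle. That is, the two centers must be essentially different, not merely distinct. Without this the conclusion can fail: wherever $\bX_{\psi_0}=\bX_{\psi_1}$, every traceable element of $\spann_{\cycT}(\bX_{\psi_0},\bX_{\psi_1})$ coincides with $\bX_{\psi_0}$, while $\bX_{\psi_2}$ must differ from it there.
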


\subsubsection{Constant-affine triangle center combinations}\label{sec:constantAffineCombos}

Next, we highlight a subset of collinear triangle centers whose coefficients do not depend on the shape of the triangle, \ie, are constant with respect to the triangle edge lengths $a$, $b$, and $c$.

\begin{definition}\label{lem:combinationsConstantAffine}
Let $\psi_0$ and $\psi_1$ be two triangle center functions with $\deg(\psi_0) = \deg(\psi_1)$ and $\gamma_0, \gamma_1 \in \RR\setminus\{0\}$. We refer to the triangle center function
$$
\psi_{\lambda_0 : \lambda_1}(a,b,c) = \lambda_0\, \psi_0(a,b,c) + \lambda_1 \,\psi_1(a,b,c),
$$
as a \emph{constant-affine combination} of $\psi_0$ and $\psi_1$. We denote the set of constant-affine combinations of $\psi_0$ and $\psi_1$ by $\spann_{\const}(\psi_0, \psi_1)$.
\end{definition}

Since constant-affine combinations of triangle centers are a special case of cyclic-affine combinations, a special case of  \lemref{lem:cycaffineCenter} applies verbatim in this setting.

\begin{proposition}
    \label{lem:constaffineCenter}
Let $\psi_{\lambda_0:\lambda_1} \in \spann_{\const}(\psi_0, \psi_1)$ be traceable. Then, the associated triangle centers are collinear and given by
\begin{equation}\label{eqn:psiomega}
\bX_{\psi_{\lambda_0:\lambda_1}} = \frac{\lambda_0\nmlz{\psi_0}}{\lambda_0\nmlz{\psi_0} + \lambda_1 \nmlz{\psi_1}} \bX_{\psi_0} + \frac{\lambda_1\nmlz{\psi_1}}{\lambda_0\nmlz{\psi_0} + \lambda_1 \nmlz{\psi_1}} \bX_{\psi_1}. 
\end{equation}
In addition, we have that 
 $$
    \psi_{\lambda_0:\lambda_1} \in \spann_{\const}(\psi_0, \psi_1) \quad \Longrightarrow \quad 
    \psi_{1} \in \spann_{\const}(\psi_0, \psi_{\lambda_0:\lambda_1}).
    $$
\end{proposition}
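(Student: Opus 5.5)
The plan is to treat this as the special case of \lemref{lem:cycaffineCenter} in which the cyclic coefficients are constant functions. First I will check that such a constant is admissible as a coefficient. A nonzero constant $\lambda\in\RR\setminus\{0\}$, viewed as the function $(a,b,c)\mapsto\lambda$, is nowhere vanishing. It is invariant under cyclic permutations. It is homogeneous of degree $0$ and bi-symmetric. Hence $\lambda\in\cycT$. Since $\deg(\psi_0)=\deg(\psi_1)$ and constants have degree $0$, the degree condition $\deg(\lambda_0\psi_0)=\deg(\lambda_1\psi_1)$ of \defref{lem:cyclicaffinecombination} holds. So $\psi_{\lambda_0:\lambda_1}=\psi_{\bomega}$ with $\omega_0\equiv\lambda_0$ and $\omega_1\equiv\lambda_1$, giving $\spann_{\const}(\psi_0,\psi_1)\subseteq\spann_{\cycT}(\psi_0,\psi_1)$. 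Substituting $\omega_i=\lambda_i$ into \eqref{eqn:x_omega} then yields \eqref{eqn:psiomega} together with the collinearity claim.

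I would also note briefly why \eqref{eqn:x_omega} holds, as a sanity check. By linearity of the trace, $\nmlz{\psi_{\bomega}}=\omega_0\nmlz{\psi_0}+\omega_1\nmlz{\psi_1}$, using \propref{lem:propNormalization}(iv) for the cyclic factors. The normalized barycentric coordinates of $\bX_{\psi_{\bomega}}$ are $(a\psi_{\bomega}(a,b,c),\dots)/\nmlz{\psi_{\bomega}}$. Splitting these gives exactly the stated weighted combination of the normalized coordinates of $\bX_{\psi_0}$ and $\bX_{\psi_1}$. One subtlety is that traceability of $\psi_{\lambda_0:\lambda_1}$ does not guarantee traceability of $\psi_0$ and $\psi_1$ individually. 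Formula \eqref{eqn:psiomega} should therefore be read on triangles where $\nmlz{\psi_0},\nmlz{\psi_1}\neq0$, or in unnormalized form. I follow the lemma's convention there.

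For the second claim I solve for $\psi_1$ directly, rather than citing the cyclic version. That version only gives $\psi_1\in\spann_{\cycT}$, and I must check that the coefficients stay constant. From $\psi_{\lambda_0:\lambda_1}=\lambda_0\psi_0+\lambda_1\psi_1$ with $\lambda_1\neq0$ we get
\[
\psi_1=-\tfrac{\lambda_0}{\lambda_1}\,\psi_0+\tfrac{1}{\lambda_1}\,\psi_{\lambda_0:\lambda_1}.
\]
Both coefficients are nonzero real constants, since $\lambda_0\neq0$. The degrees agree because $\deg(\psi_{\lambda_0:\lambda_1})=\deg(\psi_0)$. Hence $\psi_1\in\spann_{\const}(\psi_0,\psi_{\lambda_0:\lambda_1})$.

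The main obstacle is the bookkeeping: confirming that constants qualify as elements of $\cycT$ (nowhere vanishing and degree zero), and that the nonvanishing requirement on the coefficients is preserved under the inversion. There is no substantive computation beyond this.
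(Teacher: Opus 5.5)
Your proposal is correct and takes the paper's route. The paper likewise treats the statement as the special case $\omega_i\equiv\lambda_i\in\cycT$ of \lemref{lem:cycaffineCenter}, and your direct inversion $\psi_1=-\tfrac{\lambda_0}{\lambda_1}\psi_0+\tfrac{1}{\lambda_1}\psi_{\lambda_0:\lambda_1}$ is that lemma's proof with constant coefficients. Your remark that the formula implicitly needs $\nmlz{\psi_0},\nmlz{\psi_1}\neq 0$ is also fair, since the paper leaves that assumption implicit as well.
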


Note that if the two triangle center functions $\psi_0$ and $\psi_1$ in \lemref{lem:constaffineCenter} are normalized and therefore have homogeneity degree $-1$, both the coefficients of the triangle center function and the triangle centers are constant and the resulting triangle center function $\psi_{\lambda_0:\lambda_1}$ is normalized. If, in addition, $\lambda_0 + \lambda_1 = 1$, the coefficients of the triangle center function and the triangle center are the same.

Notably, constant-affine combinations produce families of triangle centers that are pairwise essentially different.

\begin{lemma}\label{lem:essDifferentConstAffine}
If the triangle center functions $\psi_0$ and $\psi_1$ in \lemref{lem:constaffineCenter} correspond to essentially different traceable triangle centers, then for $\lambda_0, \lambda_1\neq 0$, the triangle centers corresponding to $\psi_{\lambda_0:\lambda_1}$ are essentially different from $\psi_0$ and $\psi_1$. Additionally, if $\nicefrac{\lambda_0}{\lambda_1} \neq \nicefrac{\overline{\lambda}_0}{\overline{\lambda}_1}$, then the triangle center functions $\psi_{\lambda_0:\lambda_1}$ and $\psi_{\overline{\lambda}_0:\overline{\lambda}_1}$ are essentially different. 
\end{lemma}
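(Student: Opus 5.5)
Everything reduces to Proposition~\ref{lem:constaffineCenter}. It says that for a traceable $\psi_{\lambda_0:\lambda_1}$ the center $\bX_{\psi_{\lambda_0:\lambda_1}}$ is an affine combination of $\bX_{\psi_0}$ and $\bX_{\psi_1}$, with weights
\[
t = \frac{\lambda_0\nmlz{\psi_0}}{\lambda_0\nmlz{\psi_0}+\lambda_1\nmlz{\psi_1}}, \qquad 1-t .
\]
Set $\bX_0 := \bX_{\psi_0}$ and $\bX_1 := \bX_{\psi_1}$. These are well-defined points because both centers are traceable, and by assumption they coincide only for equilateral triangles. Fix a non-equilateral triangle $\Delta$, so that $\bX_0 \neq \bX_1$.

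\textbf{Step 1: $\psi_{\lambda_0:\lambda_1}$ differs from $\psi_0$ and $\psi_1$.} We have
\[
\bX_{\psi_{\lambda_0:\lambda_1}} - \bX_0 = (1-t)(\bX_1 - \bX_0),
\]
so the two centers coincide exactly when $t=1$, i.e.\ when $\lambda_1\nmlz{\psi_1}=0$. This is impossible, since $\lambda_1\neq 0$ and $\nmlz{\psi_1}\neq 0$ by traceability. Symmetrically, $\bX_{\psi_{\lambda_0:\lambda_1}} = \bX_1$ forces $t=0$, i.e.\ $\lambda_0\nmlz{\psi_0}=0$, which is also excluded.

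\textbf{Step 2: two different ratios give different centers.} Let $\bar t$ be the analogous weight for $(\overline\lambda_0,\overline\lambda_1)$. Since both points lie on the line through the distinct points $\bX_0,\bX_1$, they coincide exactly when $t=\bar t$. Writing $s=\nmlz{\psi_0}/\nmlz{\psi_1}$, which is nonzero and finite, we get
\[
t=\frac{(\lambda_0/\lambda_1)\,s}{(\lambda_0/\lambda_1)\,s+1},
\]
a Möbius function of $\lambda_0/\lambda_1$. It is injective because $s\neq 0$. Hence $t=\bar t$ forces $\lambda_0/\lambda_1=\overline\lambda_0/\overline\lambda_1$, contradicting the hypothesis.

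\textbf{Main obstacle: traceability of the combinations.} The statement implicitly needs $\psi_{\lambda_0:\lambda_1}$ and $\psi_{\overline\lambda_0:\overline\lambda_1}$ to be traceable, i.e.\ $\lambda_0\nmlz{\psi_0}+\lambda_1\nmlz{\psi_1}\neq 0$, so that the points exist and Proposition~\ref{lem:constaffineCenter} applies. I take this as part of the hypothesis, read off from the phrase ``in \lemref{lem:constaffineCenter}''. For triangles where the denominator would vanish the center is at infinity while $\bX_0$ is finite, so even then no coincidence can occur. With that interpretation fixed, the remaining argument is just the affine-line computation above.
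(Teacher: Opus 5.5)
Your proof is correct and is essentially the paper's argument. The paper equates the normalized coordinates $\psi/\nmlz{\psi}$ and cross-multiplies, which reduces the first claim to $\psi_0/\nmlz{\psi_0}=\psi_1/\nmlz{\psi_1}$ and the second to $\bigl(\lambda_0/\lambda_1-\overline{\lambda}_0/\overline{\lambda}_1\bigr)\bigl(\psi_0/\nmlz{\psi_0}-\psi_1/\nmlz{\psi_1}\bigr)=0$; this is your affine-parameter computation ($t=1$, resp.\ injectivity of $t$ in $\lambda_0/\lambda_1$) written in coordinates, and your explicit assumption $\lambda_0\nmlz{\psi_0}+\lambda_1\nmlz{\psi_1}\neq 0$ is used implicitly in the paper when it divides by $\nmlz{\psi_{\lambda_0:\lambda_1}}$.
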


Finally, note that, unlike cyclic-affine combinations, constant-affine combinations depend on the choice of triangle center functions. 
Nevertheless, using three collinear triangle centers, we can define a set of constant-affine combinations that are independent of the specific choice of triangle center function. For more information, see~\appref{sec:app:section2}, \lemref{lem:AffineFromCyclic}. An example of families of constant-affine triangle centers within the Brocard axis is provided in \exref{ex:brocard}.

\section{$\bPsi$-Triangle families}\label{sec:PsiTriangleFamilies}
In this section, we explore families of triangles that can be derived from a given triangle \(\Delta\in \triangle\). Inspired by Kimberling's approach, which employs triangle center functions as abstract tools for studying triangle centers, we describe these triangle families in terms of triplets of functions.

\begin{definition}
    \label{def:TriangleFamilyFunction}
    A \emph{\(\bPsi\)-triangle family} \(t\mapsto \Delta_t^{\bPsi}\in\triangle\) is determined by a triangle \(\Delta\in \triangle\) and three scalar valued functions $\Psi_i(t)$, 
    \begin{equation*}
\bPsi\colon\RR\to \RR^3,\ t\mapsto \begin{pmatrix}
    \Psi_1(t) \\ \Psi_2(t) \\ \Psi_3(t)
\end{pmatrix},
    \end{equation*} 
    such that \(\Psi_1(t) + \Psi_2(t) + \Psi_3(t)\neq 0\) and not \(\Psi_1(t) \equiv \Psi_2(t) \equiv \Psi_3(t)\). In terms of the barycentric coordinates with respect the vertices \(\bA,\bB\), and \(\bC\) of \(\Delta\), the vertices of \(\Delta_t^{\bPhi}\) are given by
    \begin{equation*}
        \bA_t^{\bPsi} = \begin{bmatrix}\Psi_1(t) \\ \Psi_2(t) \\ \Psi_3(t)\end{bmatrix}, \quad 
        \bB_t^{\bPsi} = \begin{bmatrix}\Psi_3(t) \\ \Psi_1(t) \\ \Psi_2(t)\end{bmatrix}, \quad
        \bC_t^{\bPsi} = \begin{bmatrix}\Psi_2(t) \\ \Psi_3(t) \\ \Psi_1(t)\end{bmatrix}.
    \end{equation*}
    In the following, we refer to the three functions \(\Psi_1(t)\), \(\Psi_2(t)\), and \(\Psi_3(t)\) as the \emph{generating functions} of the family \(\Delta_t^{\bPsi}\). Additionally, we call a triangle family \emph{interpolating}, if $\Delta_t^{\bPsi} = \Delta$ for some $t$. 
\end{definition}

Similar to triangle center functions, we follow the convention that generic triangle families are denoted by $\bPsi$, and specific ones are represented by $\bPhi$ with a subscript. For example,  
\begin{equation}\label{eqn:identity}
\bPhiID \colon t \mapsto \begin{pmatrix}
    1\\0\\0
\end{pmatrix}
\end{equation}
is the identity mapping, and 
\begin{equation}\label{eqn:scaling}
 \bPhiS \colon t \mapsto 
 \begin{pmatrix}1 + 2t\\ 1 -t\\ 1-t\end{pmatrix}
\end{equation}
 represents a homothety with the centroid as the origin. 

The following lemma identifies two invariants of $\bPsi$-triangle families that can serve as necessary conditions for a triangle family to be classified as a $\bPsi$-triangle family:

\begin{lemma}\label{lem:propertiesFamilies}
 For all $t\in\RR$, the following statements hold:
\begin{enumerate}
    \item The centroid of a triangle \(\Delta^\bPsi_t\) coincides with the centroid of \(\Delta\).
    \item If $\Delta$ is equilateral, $\Delta_t^{\bPsi}$ is also equilateral. 
    \end{enumerate}
\end{lemma}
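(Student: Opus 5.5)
Both claims follow from the cyclic structure of the barycentric coordinates of $\bA_t^{\bPsi},\bB_t^{\bPsi},\bC_t^{\bPsi}$. Throughout, fix $t$ and write $\Psi_i=\Psi_i(t)$ and $S=\Psi_1+\Psi_2+\Psi_3\neq 0$.

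For (i), I would first convert each vertex to Cartesian form:
\[
\bA_t^{\bPsi}=\tfrac1S(\Psi_1\bA+\Psi_2\bB+\Psi_3\bC),\quad
\bB_t^{\bPsi}=\tfrac1S(\Psi_3\bA+\Psi_1\bB+\Psi_2\bC),\quad
\bC_t^{\bPsi}=\tfrac1S(\Psi_2\bA+\Psi_3\bB+\Psi_1\bC).
\]
Adding these three, the coefficient of each of $\bA$, $\bB$, $\bC$ is $S/S=1$. Hence $\bA_t^{\bPsi}+\bB_t^{\bPsi}+\bC_t^{\bPsi}=\bA+\bB+\bC$. Dividing by $3$ gives equality of the centroids, which is $\bX_2=[1:1:1]^T$.

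For (ii), I would use the rotation about the centroid. Let $\bR$ be the rotation by $2\pi/3$ about the centroid $\bX_2$, so that $\bR$ is affine and satisfies $\bR(\bA)=\bB$, $\bR(\bB)=\bC$, $\bR(\bC)=\bA$. This map exists because $\Delta$ is equilateral; if $\Delta$ has the opposite orientation, I would use the rotation by $-2\pi/3$ instead. An affine map preserves affine combinations whose weights sum to $1$, so
\[
\bR(\bA_t^{\bPsi})=\tfrac1S(\Psi_1\bB+\Psi_2\bC+\Psi_3\bA)=\bB_t^{\bPsi}.
\]
The same argument gives $\bR(\bB_t^{\bPsi})=\bC_t^{\bPsi}$ and $\bR(\bC_t^{\bPsi})=\bA_t^{\bPsi}$. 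Thus the three new vertices form an orbit of a rotation of order $3$ about $\bX_2$. They are therefore equidistant from $\bX_2$ and separated by angles of $2\pi/3$, so all three pairwise distances are equal.

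The main obstacle is non-degeneracy: a rotation orbit could collapse to the single point $\bX_2$. This happens exactly when $\bA_t^{\bPsi}=\bX_2$, that is, when $\Psi_1=\Psi_2=\Psi_3$. I would rule it out using the assumption in \defref{def:TriangleFamilyFunction} that the generating functions are not all equal. Strictly, that assumption excludes equality only identically in $t$, not at every single $t$. So I would read the definition's requirement $\Delta_t^{\bPsi}\in\triangle$ as excluding such values of $t$, and then conclude that $\Delta_t^{\bPsi}$ is a non-degenerate equilateral triangle.
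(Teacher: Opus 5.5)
Your proposal is correct and follows the paper's proof. Part (i) adds the three barycentric expressions for $\bA_t^{\bPsi},\bB_t^{\bPsi},\bC_t^{\bPsi}$, and part (ii) uses the rotation by $2\pi/3$ about the centroid, which cyclically permutes $\bA,\bB,\bC$ and hence also the new vertices. Your direction $\bR(\bA_t^{\bPsi})=\bB_t^{\bPsi}$ is the correct one; the paper writes $\bC_t^{\bPsi}$, which does not affect the conclusion. Your extra remark that the orbit collapses to $\bX_2$ exactly when $\Psi_1(t)=\Psi_2(t)=\Psi_3(t)$, as for the nedian family at $t=\nicefrac{1}{2}$, covers a degeneracy the paper leaves implicit.
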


This lemma provides a useful criterion for determining when a given triangle family is not a $\bPsi$-triangle family. For example, families of triangles obtained by linear displacements of one of the vertices, cannot be examples of $\bPsi$-triangle families as they do not preserve the centroid~\cite{Jurkin:2022:LOC, Kodrnja:2023:LCT}. Moreover, the centroids of Poristic triangles generically trace a circle~\cite[Thm. 4.6]{Odehnal:2011:PLT}.

\subsection{Concatenation of $\bPsi$-triangle families}
Having established some basic properties of $\bPsi$-triangle families, it is natural to consider the concatenation of triples of generating functions corresponding to two triangle families:

\begin{deflemma}\label{deflemma:conctrianglefamilies}
The concatenation of two triangle families, 
\begin{equation*}
    \bPsi = \begin{pmatrix}
    \Psi_1 \\ \Psi_2 \\ \Psi_3
    \end{pmatrix}, \qquad \tilde\bPsi = \begin{pmatrix}
    \tilde\Psi_1 \\ \tilde\Psi_2 \\ \tilde\Psi_3
    \end{pmatrix},
\end{equation*}
is a $\bPsi$-triangle family given by 
\begin{equation}\label{eqn:concatenationFactor}
\bPsi \circ \tilde\bPsi\colon t \mapsto\begin{pmatrix}
    \Psi_1\tilde\Psi_1 + \Psi_2\tilde\Psi_3 + \Psi_3\tilde\Psi_2 \\ 
    \Psi_1\tilde\Psi_2 + \Psi_2\tilde\Psi_1 + \Psi_3\tilde\Psi_3 \\ 
    \Psi_1\tilde\Psi_3 + \Psi_2\tilde\Psi_2 + \Psi_3\tilde\Psi_1
\end{pmatrix}.
\end{equation}
\end{deflemma}

Given that the set of all $\bPsi$-triangle families is closed under concatenation, the following lemma lists additional algebraic properties of this structure:

\begin{lemma}\label{lem:proptriangleConcatenation} 
The following properties hold for all 
$\bPsi$-triangle families: 
\begin{enumerate}
    \item The concatenation of triangle families commutes: 
$$
\bPsi \circ \tilde\bPsi = \tilde\bPsi \circ \bPsi.
$$
 \item The concatenation of triangle families is associative: 
 $$
 \bPsi \circ \left( \tilde\bPsi \circ \hat{\bPsi}\right) = \left(\bPsi \circ \tilde\bPsi\right) \circ \hat{\bPsi}.
 $$
 \item $\bPhiID$ is a neutral element with respect to the concatenation, 
 $$
 \bPsi \circ \bPhiID = \bPsi.
 $$
 \item Every $\bPsi$-triangle family has an inverse, 
 \begin{align*}
 \bPsi^{-1}\colon t \mapsto %\left( 
 \begin{pmatrix}
  (\Psi_1)^2 - \Psi_2\Psi_3\\
  (\Psi_3)^2 - \Psi_1 \Psi_2\\
 (\Psi_2)^2 - \Psi_3 \Psi_1 
 \end{pmatrix}.
\end{align*}
\end{enumerate}
Consequently, the set of all $\bPsi$-triangle families in combination with the concatenation is an Abelian group with neutral element $\bPhiID$.
\end{lemma}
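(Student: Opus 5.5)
The plan is to encode a generating triple as an element of the commutative ring $\RR[x]/(x^3-1)$ (equivalently, a $3\times 3$ circulant matrix). To $\bPsi$ we associate
\[
p_{\bPsi}(x)=\Psi_1+\Psi_2x+\Psi_3x^2 \pmod{x^3-1},
\]
with coefficients depending on $t$.

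Multiplying out $p_{\bPsi}\,p_{\tilde\bPsi}$ and reducing with $x^3=1$ gives the following coefficients:
\begin{itemize}
\item constant term $\Psi_1\tilde\Psi_1+\Psi_2\tilde\Psi_3+\Psi_3\tilde\Psi_2$;
\item coefficient of $x$: $\Psi_1\tilde\Psi_2+\Psi_2\tilde\Psi_1+\Psi_3\tilde\Psi_3$;
\item coefficient of $x^2$: $\Psi_1\tilde\Psi_3+\Psi_2\tilde\Psi_2+\Psi_3\tilde\Psi_1$.
\end{itemize}
This is exactly \teqref{eqn:concatenationFactor}, so $p_{\bPsi\circ\tilde\bPsi}=p_{\bPsi}\,p_{\tilde\bPsi}$. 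Geometrically this is the statement that the vertex-coordinate matrix of $\Delta_t^{\bPsi}$ is a circulant, and applying one family to the triangle produced by another multiplies these circulants. Since $\RR[x]/(x^3-1)$ is a commutative associative ring with unit $1=p_{\bPhiID}$, items (i)--(iii) follow at once. One point must be stated explicitly: barycentric coordinates are homogeneous, so two generating triples that differ by a nonzero scalar factor $\mu(t)$ define the same triangle family. Equalities in the group are understood modulo this scaling.

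For (iv), I would multiply $p_{\bPsi}$ by the polynomial of the proposed inverse and check the coefficients.
\begin{itemize}
\item The constant term is $N\coloneqq\Psi_1^3+\Psi_2^3+\Psi_3^3-3\Psi_1\Psi_2\Psi_3$.
\item The coefficient of $x$ is $\Psi_1(\Psi_3^2-\Psi_1\Psi_2)+\Psi_2(\Psi_1^2-\Psi_2\Psi_3)+\Psi_3(\Psi_2^2-\Psi_3\Psi_1)$. This telescopes to $0$, and the $x^2$ coefficient vanishes in the same way.
\end{itemize}
Hence $\bPsi\circ\bPsi^{-1}=N\cdot\bPhiID\cong\bPhiID$, provided $N\neq0$. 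The standard factorization gives
\[
N=(\Psi_1+\Psi_2+\Psi_3)\cdot\tfrac12\big((\Psi_1-\Psi_2)^2+(\Psi_2-\Psi_3)^2+(\Psi_3-\Psi_1)^2\big).
\]
So $N\neq0$ exactly when the sum is nonzero and the three $\Psi_i$ are not all equal, which are the defining conditions of \defref{def:TriangleFamilyFunction}.

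It remains to check that the group operations stay inside the set of $\bPsi$-triangle families, which completes Definition \& Lemma~\ref{deflemma:conctrianglefamilies}.
\begin{itemize}
\item The coordinate sum is evaluation at $x=1$, and this is a ring homomorphism. So the sum for $\bPsi\circ\tilde\bPsi$ is the product of the two sums, hence nonzero.
\item The concatenation is not a constant-triple family. Evaluation at a primitive cube root of unity $\zeta$ is also a ring homomorphism, and "not all $\Psi_i$ equal" is equivalent to $p_{\bPsi}(\zeta)\neq0$. This nonvanishing is multiplicative.
\item For $\bPsi^{-1}$, its coordinate sum is $\sum\Psi_i^2-\sum_{i<j}\Psi_i\Psi_j=N/(\Psi_1+\Psi_2+\Psi_3)\neq0$. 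It is not a constant triple, since otherwise the product $N\cdot\bPhiID$ would have zero value at $\zeta$.
\end{itemize}

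The main obstacle I expect is a subtlety in the definition rather than any computation. The condition "not $\Psi_1\equiv\Psi_2\equiv\Psi_3$" is stated as an identity, whereas invertibility needs the $\Psi_i$ to be not all equal at each $t$. I would handle this by reading the condition pointwise (which \lemref{lem:propertiesFamilies} implicitly needs anyway for non-degeneracy of $\Delta_t^{\bPsi}$), or else by restricting the group statement to those $t$ where $N(t)\neq0$. I would also make the projective-equivalence convention explicit, since without it $\bPsi^{-1}$ is only an inverse up to the factor $N$.
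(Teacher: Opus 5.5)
Your proposal is correct, and it is more structural than the paper's argument. The paper handles the items separately:
\begin{itemize}
\item (i) follows from the visible symmetry of \eqref{eqn:concatenationFactor};
\item (ii) is proved by writing the triple concatenation as a matrix in the two outer factors applied to the middle one, observing that this matrix is symmetric in the outer factors, and then invoking (i);
\item (iii) is read off directly;
\item (iv) is obtained by writing $\bPsi^{-1}\circ\bPsi=\bPhiID$ as a linear system whose coefficient matrix is precisely your circulant, and concluding solvability from $N\neq 0$ via the same factorization you use.
\end{itemize}
You instead make that circulant, i.e.\ $\RR[x]/(x^3-1)$, the organizing principle from the start, so (i)--(iii) are inherited from the ring axioms. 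In (iv) you actually verify that the displayed triple is the inverse up to the factor $N$; the paper leaves this identification implicit (it is Cramer's rule). Your evaluations at $1$ and $\zeta$ also check that $\bPsi^{-1}$ again satisfies the defining conditions, which the paper does not spell out.

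The paper's route stays in the explicit coordinates used throughout, e.g.\ for the $\sigma,\tau$ decomposition. Yours explains where the formulas come from and isolates what is really needed: $N=p_{\bPsi}(1)\,|p_{\bPsi}(\zeta)|^2\neq 0$ pointwise, together with equality up to scalar factors.

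Your caveat is also well founded. The paper reads non-constancy as an identity in $t$, and its own families $\bPhiS$ at $t=0$ and $\bPhiN$ at $t=\nicefrac{1}{2}$ are constant triples. At those parameters the displayed $\bPsi^{-1}$ evaluates to $(0,0,0)$, which is why \lemref{lem:inversesbPhi} excludes these values. So the group statement does require your pointwise reading or a restriction to $\{t : N(t)\neq 0\}$.
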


\begin{figure}[t]
    \begin{footnotesize}
    \begin{overpic}[width=0.47\textwidth]{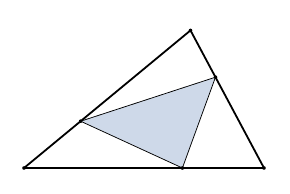}
          \put(5,4.75){$\bA$}
    \put(92,4.75){$\bB$}
    \put(65,58){$\bC$}
    \put(61,4.75){$\bC_t$}
    \put(75,41){$\bA_t$}
    \put(22.5,26){$\bB_t$}
    \put(55,24){$\Delta_t^{\bPhiA}$}
    \end{overpic}
    \hfil
    \begin{overpic}[width=0.47\textwidth]{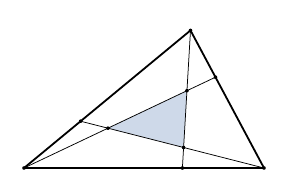}
        \put(5,4.75){$\bA$}
    \put(92,4.75){$\bB$}
    \put(65,58){$\bC$}
    \put(61,4.75){$\bC_t'$}
    \put(75,41){$\bA_t'$}
    \put(22.5,26){$\bB_t'$}
    \put(64,17){$\bA_t$}
    \put(59.5,37){$\bB_t$}
    \put(35,18){$\bC_t$}
     \put(52,23){$\Delta_t^{\bPhiN}$}
    \end{overpic}
     \end{footnotesize}%
\caption{Illustration of aliquot (left) and nedian (right) triangle families.}\label{fig:aliquotNedianConstruction}
\end{figure}

\subsection{Aliquot and nedian triangle families}
 In this paper, two triangle families, the \emph{aliquot} and the \emph{nedian} families associated with a triangle~\cite{Satterly:1956:NNT}, play an important role. The former triangle family is obtained by equipping the triangle \(\Delta\in\triangle\) with vertices \(\bA, \bB\) and \(\bC\) with an orientation and defining a new triple of vertices \(\bA_t, \bB_t, \bC_t\) such that the placement of the new vertices on the edges divides the respective edge in equal proportions (\figref{fig:aliquotNedianConstruction}). This can be achieved by taking linear combinations 
\[\bA_t = (1-t) \bB + t \bC, \quad \bB_t = (1-t) \bC + t \bA, \quad \bC_t = (1-t) \bA + t \bB.\]
This construction can be stated in terms of a \(\bPsi\)-triangle family as follows:

\begin{definition}
   The \emph{aliquot family} $\bPhiA$ is  defined by 
        \begin{equation}\label{eqn:aliquot}
        \bPhiA \colon t \mapsto \begin{pmatrix}
            0\\ 1-t\\ t
        \end{pmatrix}.
    \end{equation}
\end{definition}

Instead of choosing points \(\bA_t', \bB_t'\), and \(\bC_t'\) that determine the aliquot family as a new triangle, we can choose \(\bA_t, \bB_t\), and \(\bC_t\) to be respective pairwise intersections %\(\bA, \bB\) and \(\bC\) 
 of the lines \(\ell_{\bA\bA_t'}, \ell_{\bB\bB_t'}\), and \(\ell_{\bC\bC_t'}\), see \figref{fig:aliquotNedianConstruction}. This determines yet another \(\bPsi\)-triangle family, the \emph{nedian family} $\bPhiN$, which can be derived from an initial triangle as stated in the next lemma. 
\begin{deflemma}\label{lem:nedian}
    The \emph{nedian family} $\bPhiN$ is defined by 
    \begin{equation}\label{eqn:nedian}
    \bPhiN \colon t \mapsto \begin{pmatrix}
        (1-t)t\\ t^2\\ (1-t)^2
    \end{pmatrix} .
    \end{equation}
\end{deflemma}

The following proposition collects basic geometric properties of the aliquot and nedian families, all of which follow directly from the definitions and are illustrated in \figref{fig:triangleCurvesFromAliquotNedian}.

\begin{proposition}\label{prop:families01}
Let $\Delta = (\bA, \bB, \bC)$ be a triangle in $\triangle$,  $\bX_2$ its centroid.
\begin{enumerate}
    \item The aliquot and nedian families of a triangle are interpolating. Specifically, 
    $$
    \Delta_0^{\bPhiA} = (\bB,\bC, \bA), \quad \Delta_1^{\bPhiA} = (\bC, \bA, \bB), \quad 
    \Delta_0^{\bPhiN} = (\bC, \bA, \bB), \quad
    \text{and}\quad
    \Delta_1^{\bPhiN} = (\bA, \bB, \bC).
    $$
    \item $\Delta_{\nicefrac{1}{2}}^{\bPhiA}$ is related to the original triangle by a scaling of $\nicefrac{1}{2}$ with respect to $\bX_2$ followed by a point inversion through  $\bX_2$.
    \item The triangles $\Delta_{\nicefrac{1}{3}}^{\bPhiA}$ and $\Delta_{\nicefrac{2}{3}}^{\bPhiA}$ relate by a point inversion through $\bX_2$.
    \item The triangle $\Delta_{\nicefrac{1}{2}}^{\bPhiN}$ degenerates to $\bX_2$. 
\end{enumerate}
\end{proposition}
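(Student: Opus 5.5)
The plan is to verify each item directly from the barycentric description in \defref{def:TriangleFamilyFunction}. For a family $\bPsi$ with generating functions $\Psi_1,\Psi_2,\Psi_3$, I normalize by $s=\Psi_1+\Psi_2+\Psi_3$ and write each vertex as an affine combination of $\bA,\bB,\bC$.

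For (i) I substitute $t=0$ and $t=1$ into \eqref{eqn:aliquot} and \eqref{eqn:nedian}. For the aliquot family, $t=0$ gives $\bA_0=[0:1:0]^T=\bB$, $\bB_0=[0:0:1]^T=\bC$ and $\bC_0=[1:0:0]^T=\bA$. Likewise $t=1$ gives $(\bC,\bA,\bB)$. For the nedian family, $t=0$ gives generating triple $(0,0,1)$, so $\bA_0=\bC$, $\bB_0=\bA$ and $\bC_0=\bB$. At $t=1$ the triple is $(0,1,0)$, which I first expect to give $(\bB,\bC,\bA)$, not the $(\bA,\bB,\bC)$ claimed in the statement. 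I will recheck this against the lemma defining $\bPhiN$ (\lemref{lem:nedian}) and the cyclic convention in \defref{def:TriangleFamilyFunction}, since the stated labels may reflect the paper's ordering convention. In any case, the triangle at $t=1$ coincides with $\Delta$ as a set of vertices, and that is all that \emph{interpolating} requires.

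For (ii) and (iii) I write the vertices relative to the centroid $\bX_2=\tfrac13(\bA+\bB+\bC)$, whose invariance is \lemref{lem:propertiesFamilies}(i). At $t=\nicefrac12$ the aliquot vertex $\bA_{1/2}$ is the midpoint of $\bB\bC$. Then
\[
\bA_{1/2}-\bX_2=\tfrac12(\bB+\bC)-\tfrac13(\bA+\bB+\bC)=-\tfrac12(\bA-\bX_2),
\]
and the same holds cyclically for the other two vertices. So the map is the homothety of ratio $-\nicefrac12$ about $\bX_2$, which is a scaling by $\nicefrac12$ composed with the point reflection through $\bX_2$.

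For $t=\nicefrac13$ and $t=\nicefrac23$ I use $\bA_t+\bB_{1-t}$. Its barycentric weights are $(0,1-t,t)+(t,0,1-t)=(t,1-t,1)$, which is not symmetric, so pairing $\bA_t$ with $\bB_{1-t}$ fails. Instead I pair $\bA_{1/3}=\tfrac23\bB+\tfrac13\bC$ with $\bC_{2/3}=\tfrac13\bA+\tfrac23\bB$. This also fails to sum to $2\bX_2$, because the coefficient of $\bB$ is $\tfrac43$ rather than $\tfrac23$. The main obstacle is therefore to find the right vertex correspondence between $\Delta_{1/3}$ and $\Delta_{2/3}$, or to recognize that (iii) must be interpreted modulo relabeling. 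I will test every pairing $\{\bA_{1/3},\bB_{1/3},\bC_{1/3}\}\leftrightarrow\{\bA_{2/3},\bB_{2/3},\bC_{2/3}\}$ against the condition $\bP+\bQ=2\bX_2$, which in barycentric weights means the sum equals $(\tfrac23,\tfrac23,\tfrac23)$. Since the weights of $\bA_{1/3}$ are $(0,\tfrac23,\tfrac13)$, no pairing works. I would then report that the claim needs reinterpretation, most plausibly that the two triangles are related by the central symmetry up to a rotation or relabeling, and I would check what relation actually holds between the two vertex sets.

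For (iv) I substitute $t=\nicefrac12$ into \eqref{eqn:nedian}, giving $(\tfrac14,\tfrac14,\tfrac14)\cong[1:1:1]^T=\bX_2$. All three vertices are then the centroid, so the triangle degenerates to $\bX_2$.
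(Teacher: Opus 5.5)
Your items (ii) and (iv) are correct, and they are the direct substitution the paper has in mind (it gives no proof beyond ``follows from the definitions''). Your computation in (i) is also correct. With $\bPhiN(1)=(0,1,0)$ the definition yields $\Delta_1^{\bPhiN}=(\bB,\bC,\bA)$, so the ordered triple in the statement is off by a cyclic shift. This does not affect the interpolating property, which can only be meant up to relabeling of the vertices; the aliquot family never returns $(\bA,\bB,\bC)$ in order either. Nor does it affect any triangle center.

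The genuine error is in (iii). There you conclude that no pairing of vertices satisfies $\bP+\bQ=2\bX_2$ and that the claim needs reinterpretation. This conclusion comes from a substitution slip. Since $\bB_s=[s:0:1-s]^T$, the weights of $\bB_{1-t}$ are $(1-t,0,t)$, not $(t,0,1-t)$; the sum you computed is actually that of $\bA_t$ and $\bB_t$. With the correct weights, $\bA_t+\bB_{1-t}$ has weights $(1-t,1-t,2t)$, which equals $(\tfrac23,\tfrac23,\tfrac23)$ exactly at $t=\tfrac13$.

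Your final exhaustive check fails for the same reason. The required partner of $\bA_{1/3}$ has weights $(\tfrac23,0,\tfrac13)$, and that is exactly $\bB_{2/3}$. Checking all three vertices gives
\[
2\bX_2-\bA_{1/3}=\bB_{2/3},\qquad 2\bX_2-\bB_{1/3}=\bC_{2/3},\qquad 2\bX_2-\bC_{1/3}=\bA_{2/3}.
\]
So the point reflection through $\bX_2$ maps $\Delta_{1/3}^{\bPhiA}$ onto $\Delta_{2/3}^{\bPhiA}$, up to the same kind of cyclic relabeling as in (i), and (iii) holds as stated. Your first pairing idea, $\bA_t$ with $\bB_{1-t}$, was the right one; only the arithmetic went wrong.
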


\begin{figure}[t]
        \begin{footnotesize}
    \begin{overpic}[width=0.47\textwidth]{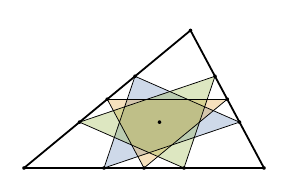}
          \put(5,4){$\bA$}
    \put(92,4){$\bB$}
    \put(65,58){$\bC$}
    \put(84,25){$\Delta_{\nicefrac{1}{3}}^{\bPhiA}$}
    \put(80,33){$\Delta_{\nicefrac{1}{2}}^{\bPhiA}$}
    \put(76,41){$\Delta_{\nicefrac{2}{3}}^{\bPhiA}$}
     \put(56.5,23){$\bX_2$}
    \end{overpic}
    \hfil
    \begin{overpic}[width=0.47\textwidth]{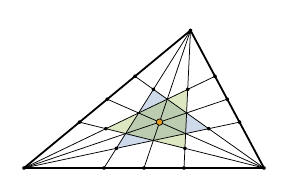}
        \put(5,4){$\bA$}
    \put(92,4){$\bB$}
    \put(65,58){$\bC$}
    \put(41.5,12.5){$\Delta_{\nicefrac{1}{3}}^{\bPhiN}$}
    \put(53.5,12.5){$\Delta_{\nicefrac{2}{3}}^{\bPhiN}$}
    \put(56,26){$\Delta_{\nicefrac{1}{2}}^{\bPhiN}$}
    \end{overpic}%
     \end{footnotesize}%
    \caption{Illustrations of properties of aliquot and nedian triangle families discussed in \lemref{prop:families01}.}\label{fig:triangleCurvesFromAliquotNedian}
\end{figure}

In addition to these results, we show a connection between the $\bPsi$-triangle families introduced so far and their inverses. Although these observations have no direct implications for the theory developed in the next section, we find them nonetheless noteworthy. 

\begin{lemma}\label{lem:inversesbPhi}
The $\bPsi$-triangle families $\bPhiS$, $\bPhiA$, and $\bPhiN$ relate to their inverses as follows:
\begin{itemize}
    \item For $t\neq 0$, the inverse of the family of scaled triangles in \teqref{eqn:scaling} is a scaling by $\frac{1}{t}$,
    $$
    \bPhiS^{-1}(t) = \bPhiS\left(\frac{1}{t}\right).
    $$ 
    \item For $t\neq \nicefrac{1}{2}$, the inverse of the aliquot family in \teqref{eqn:aliquot} is a member of the nedian family, 
    $$
    \bPhiA^{-1}(t) = \bPhiN\left(-\frac{t}{1-2t}\right).
    $$
    \item  For  \(t\neq\nicefrac{1}{2}\), the inverse of the nedian family in \teqref{eqn:nedian} is a member of the aliquot family,
    $$
    \bPhiNinv(t) = \bPhiA\left(-\frac{t}{1-2t}\right).
    $$
\end{itemize}
\end{lemma}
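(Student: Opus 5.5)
The proof is a direct computation. It rests on one observation: a $\bPsi$-triangle family depends on its generating triple only up to multiplication by a nonzero scalar function $\rho(t)$. This holds because the vertices $\bA_t^{\bPsi},\bB_t^{\bPsi},\bC_t^{\bPsi}$ are given in \emph{homogeneous} barycentric coordinates. Multiplying all three generating functions by $\rho(t)\neq 0$ multiplies each vertex coordinate vector by the same factor, so the same points and hence the same triangle are obtained. So each identity in the statement means that the two sides are proportional with a nonzero factor for the admissible $t$. For each family I will compute $\bPsi^{-1}$ with the explicit formula in \lemref{lem:proptriangleConcatenation}(iv), and then exhibit the proportionality factor.

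For $\bPhiS$ the formula gives first component $(1+2t)^2-(1-t)^2=3t(t+2)$. The second and third components both equal $(1-t)^2-(1+2t)(1-t)=3t(t-1)$. Hence $\bPhiS^{-1}(t)=3t\,(t+2,\,t-1,\,t-1)^T$. On the other side, $t\,\bPhiS(1/t)=(t+2,\,t-1,\,t-1)^T$. The two triples are therefore proportional with the factor $3t^2$, which is nonzero exactly when $t\neq 0$.

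For $\bPhiA=(0,1-t,t)^T$ the inverse is $\bigl(-t(1-t),\,t^2,\,(1-t)^2\bigr)^T$. Now set $s=-t/(1-2t)$, so that $1-s=(1-t)/(1-2t)$. Substituting into \teqref{eqn:nedian} gives
\[
\bPhiN(s)=(1-2t)^{-2}\bigl(-t(1-t),\,t^2,\,(1-t)^2\bigr)^T.
\]
This is proportional to $\bPhiA^{-1}(t)$, and the factor is well defined precisely for $t\neq\nicefrac12$.

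For $\bPhiN$ the computation is slightly less immediate, and it is the only step requiring care. The first component vanishes, since $t^2(1-t)^2-t^2(1-t)^2=0$. For the other two, the plan is to factor by the difference of cubes $(1-t)^3-t^3=(1-2t)(1-t+t^2)$. This gives second component $(1-t)^4-t^3(1-t)=(1-t)(1-2t)(1-t+t^2)$ and third component $t^4-t(1-t)^3=-t(1-2t)(1-t+t^2)$. So
\[
\bPhiNinv(t)=(1-2t)(1-t+t^2)\,\bigl(0,\,1-t,\,-t\bigr)^T.
\]
Meanwhile, with the same $s$ as before, $\bPhiA(s)=(1-2t)^{-1}(0,\,1-t,\,-t)^T$. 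The proportionality factor is $(1-2t)^2(1-t+t^2)$. It is nonzero for $t\neq\nicefrac12$, because $1-t+t^2=(t-\nicefrac12)^2+\nicefrac34>0$. This last check, together with the observation that equality is meant up to scaling, is what completes the argument.
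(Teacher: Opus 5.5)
Your proof is correct and follows essentially the paper's route: both compute each inverse from the explicit formula in \lemref{lem:proptriangleConcatenation}(iv) and identify it with the claimed family member using the homogeneity of barycentric coordinates, except that the paper solves the normalized equations for the parameter while you substitute $s=-t/(1-2t)$ directly and exhibit the nonzero factors $3t^2$, $(1-2t)^2$ and $(1-2t)^2(1-t+t^2)$. Your first component $-t(1-t)$ of $\bPhiA^{-1}(t)$ is the correct one; the paper prints $(1-t)t$, which is a sign slip, as its own normalizing denominator $1-3t(1-t)$ shows.
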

Finally, we note that by concatenation of the of the aliquot and nedian families we can obtain a wealth of non-trivial examples of $\bPsi$-families.

\subsection{Relating triangle families to the aliquot family}

Finally, we show which $\bPsi$-triangle families relate to the aliquot family through appropriate scaling and reparametrization. This observation will play an important role at several points in the next section. Before we continue, we first introduce the following subset of $\bPsi$-triangle families: 

\begin{definition}
Given a $\bPsi$-triangle family, define  \begin{equation*}
    \delta_{\bPsi}(t) = 2\Psi_1(t) - \Psi_2(t)-\Psi_3(t)\in\RR.
\end{equation*} We refer to a $\bPsi$-triangle family as \emph{decomposable}, if for all $t\in\RR$, the condition $\delta_{\bPsi}(t) = 0$ implies $\Psi_1(t) = \Psi_2(t) = \Psi_3(t)$.    
\end{definition}

\begin{figure}[t]
    \centering
    \begin{footnotesize}
    \begin{overpic}[width=0.5\textwidth]
    {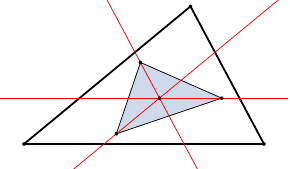}
       \put(5,4.75){$\bA$}
    \put(92,4.75){$\bB$}
    \put(65,58){$\bC$}
    \put(53.5,27){$\bX_2$}
    \put(75,20){$\bC_t^{\bPsi}$}
    \put(35,13){$\bB_t^{\bPsi}$}
    \put(49.5,37){$\bA_t^{\bPsi}$}
    \put(69,28){$\Delta_t^{\bPsi}$}
    \end{overpic}%
        \end{footnotesize}%
    \caption{Illustration of a triangle in a not decomposable triangle family $\bPsi$.}\label{fig:NotDecomposableTriangleFamily}
\end{figure}

Geometrically, a decomposable $\bPsi$-triangle family does not contain a triangle $\Delta_t^{\bPsi}$ whose vertices are positioned in the following configuration (see \figref{fig:NotDecomposableTriangleFamily}). For a triangle $\Delta\in\triangle$, let  $\ell_{\bA\bB}$, $\ell_{\bB\bC}$, and $\ell_{\bC\bA}$ denote the three lines that pass through the centroid $\bX_2$ and are parallel to $\bA\bB$, $\bB\bC$, and $\bC\bA$, respectively. The triangle family is called decomposable if, for all $t\in\RR$, $\bA_t^{\bPsi}$, $\bB_t^{\bPsi}$, $\bC_t^{\bPsi}$ lying on the lines $\ell_{\bB\bC}$, $\ell_{\bC\bA}$, and $\ell_{\bA\bB}$, respectively, implies that $\bA_t^{\bPsi} = \bB_t^{\bPsi} = \bC_t^{\bPsi} = \bX_2$.

Straightforward computations verify that non-trivial examples of decomposable families include the aliquot family $\bPhiA$ and the nedian family $\bPhiN$. On the other hand, an example of a not decomposable family is 
$$
\bPsi \colon t \mapsto \begin{pmatrix}
    1-t\\ t\\ 0
\end{pmatrix},$$
since $\delta_{\bPsi}(2) = 0$, but $\bPsi(2) = (-1,2,0)$.

The next lemma justifies the term \emph{decomposable} by demonstrating that each triangle family in this class can be expressed as a composition of simpler, well-understood transformations, making the underlying structure explicit.
\begin{lemma}\label{lem:decompositionSigmaTau}
    A decomposable $\bPsi$-triangle family allows a decomposition  
$$
\bPsi(t) = \bPhiS(\sigma(t))\circ \bPhiA(\tau(t)).
$$
where 
\begin{equation}\label{eqn:sigmatau}
\sigma(t) = -\frac{2\Psi_1(t) - \Psi_2(t) - \Psi_3(t)}{\Psi_1(t) + \Psi_2(t) + \Psi_3(t)}
\quad \text{and} \quad 
    \tau(t) = \frac{\Psi_1(t) - \Psi_3(t)}{2 \Psi_1(t) -  \Psi_2(t) - \Psi_3(t)}.
\end{equation}
If $\Psi_1(t) = \Psi_2(t) = \Psi_3(t)$, then $\sigma(t) = 0$ while $\tau(t)$ is undefined.
\end{lemma}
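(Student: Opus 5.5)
The plan is a direct verification: compute the concatenation $\bPhiS(\sigma)\circ\bPhiA(\tau)$ with \eqref{eqn:concatenationFactor}, substitute $\sigma(t),\tau(t)$ from \eqref{eqn:sigmatau}, and show the result is a nonzero scalar multiple of $\bPsi(t)$. Since the vertices of $\Delta_t^{\bPsi}$ are given in \emph{homogeneous} barycentric coordinates, a pointwise rescaling of the generating triple by a nonzero factor (here depending only on $t$) does not change the triangle. So equality is to be read in this projective sense.

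First fix $t$ with $\delta_{\bPsi}(t)\neq 0$; then both $\sigma$ and $\tau$ are defined, because $\Psi_1+\Psi_2+\Psi_3\neq 0$ by \defref{def:TriangleFamilyFunction}. Writing $\bPhiS(\sigma)=(1+2\sigma,1-\sigma,1-\sigma)^T$ and $\bPhiA(\tau)=(0,1-\tau,\tau)^T$, the concatenation formula gives
\[
\bPhiS(\sigma)\circ\bPhiA(\tau)=\begin{pmatrix} 1-\sigma\\ 1+2\sigma-3\sigma\tau\\ 1-\sigma+3\sigma\tau\end{pmatrix}.
\]
The entries sum to $3$. Put $S=\Psi_1+\Psi_2+\Psi_3$ and $\sigma=-\delta_{\bPsi}/S$.
\begin{itemize}
\item The first entry is $1-\sigma=(S+\delta_{\bPsi})/S=3\Psi_1/S$.
\item The definition of $\tau$ gives $\sigma\tau=-(\Psi_1-\Psi_3)/S$, so the third entry is $(3\Psi_1-3\Psi_1+3\Psi_3)/S=3\Psi_3/S$.
\item Because the entries sum to $3$, the second entry is $3\Psi_2/S$.
\end{itemize}
Hence the concatenation equals $\tfrac{3}{S}\bPsi(t)$, which describes the same triangle.

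Next, decomposability handles the remaining parameters. If $\delta_{\bPsi}(t)=0$, then $\Psi_1=\Psi_2=\Psi_3$, so $\Delta_t^{\bPsi}$ degenerates to the centroid. In that case $\sigma(t)=0$ directly from the formula, while $\tau$ has the indeterminate form $0/0$. This is exactly the final sentence of the statement. The decomposition is still consistent there: $\bPhiS(0)=(1,1,1)$ concatenated with any aliquot triple yields $(1,1,1)$, again a multiple of $\bPsi(t)$. The hypothesis of decomposability is what rules out the only bad case, namely $\delta_{\bPsi}=0$ with $\bPsi$ non-constant, where $\tau$ would be an honest division by zero.

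There is no genuine obstacle; the only point needing care is the bookkeeping in the concatenation. In particular, the product \eqref{eqn:concatenationFactor} pairs $\Psi_2$ with $\tilde\Psi_3$ in the first entry, and the algebra above depends on getting that pairing right. It is also worth stating explicitly that equality of $\bPsi$-families is meant up to nonzero scalar multiples.
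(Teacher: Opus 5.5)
Your proposal is correct and follows essentially the paper's route. The paper also expands $\bPhiS(\sigma)\circ\bPhiA(\tau)$ with the concatenation formula and compares normalized barycentric coordinates with $\bPsi(t)$ (your three entries divided by $3$ are exactly its system \eqref{eqn:PhiSigmaTau}), and it likewise uses decomposability to ensure $\delta_{\bPsi}(t)\neq 0$ outside the case $\Psi_1=\Psi_2=\Psi_3$. The only difference is direction: the paper solves the first equation for $\sigma$ and reduces the other two to $\delta_{\bPsi}\,\tau=\Psi_1-\Psi_3$, whereas you substitute the given formulas and verify them, which suffices for the stated existence claim (the paper's derivation additionally shows that $(\sigma,\tau)$ is forced).
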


\begin{proof} 
To arrive at constraints for $\sigma(t)$ and $\tau(t)$, we compare a normalized  form of $\bPsi(t)$ and $\bPhiS(\sigma(t))\circ \bPhiA(\tau(t))$. Specifically, by using the normalized version of the expression in \teqref{eqn:concatenationFactor} (or directly \teqref{eqn:concatenation}), we obtain
\begin{align}\label{eqn:PhiSigmaTau}
\frac{\Psi_1(t)}{\Psi_1(t) + \Psi_2(t) + \Psi_3(t)} &= \frac{1}{3}\left(1 - \sigma(t)\right) \nonumber\\
   \frac{\Psi_2(t)}{\Psi_1(t) + \Psi_2(t) + \Psi_3(t)} &= \frac{1}{3}\left(1 +\sigma(t) \left(2 - 3 \tau(t)\right)\right)\\
    \frac{\Psi_3(t)}{\Psi_1(t) + \Psi_2(t) + \Psi_3(t)} &= \frac{1}{3}\left( 1-\sigma(t) \left(1 - 3\tau(t)\right) \right).\nonumber
\end{align}
Solving the first equation for $\sigma(t)$ results in the stated expression for $\sigma(t)$. Inserting $\sigma(t)$ into the second and third equations simplifies to the same expression, namely,
$$
(2 \Psi_1(t) - \Psi_2(t) - \Psi_3(t)) \tau(t) = \Psi_1(t) - \Psi_3(t).
$$
Note that if $\Psi_1(t) = \Psi_2(t) = \Psi_3(t)$, then $\sigma(t) = 0$ and $\tau(t)$ is undefined. Otherwise, since $\bPsi$ is decomposable, we have that $2\Psi_1(t) - \Psi_2(t) - \Psi_3(t) \neq 0$ and obtain the stated expression for $\tau(t)$.
\end{proof}

\begin{corollary}\label{cor:nediantoaliquot}
For all $t\in\RR$, we have
$   \bPhiN(t)  = \bPhiS(\sigma(t)) \circ \bPhiA(\tau(t))$,
    where 
\begin{align}\label{eqn:limaconTauSigma}
\tau(t) = \frac{1-t}{1-2t} \quad \text{and}  \quad
\sigma(t) = \frac{(1 - 2t)^2}{1-t+t^2}.
\end{align}
\end{corollary}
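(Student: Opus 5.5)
The plan is to apply \lemref{lem:decompositionSigmaTau} directly to the nedian generating functions
$$
\Psi_1(t) = (1-t)t,\qquad \Psi_2(t) = t^2,\qquad \Psi_3(t) = (1-t)^2 .
$$
Before doing so, I will confirm that $\bPhiN$ is a decomposable family, so that the lemma applies.

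The first step is to compute the relevant quantities. The denominator is
$$
\Psi_1+\Psi_2+\Psi_3 = t - t^2 + t^2 + 1 - 2t + t^2 = 1 - t + t^2 .
$$
This is strictly positive for all real $t$, since its discriminant is negative, so $\bPhiN$ is an admissible family and $\sigma$ is defined everywhere. Next,
$$
\delta_{\bPhiN}(t) = 2t - 2t^2 - t^2 - (1-t)^2 = -(1-2t)^2 ,
$$
which vanishes only at $t=\nicefrac12$. At that parameter $\Psi_1=\Psi_2=\Psi_3=\nicefrac14$. Hence $\bPhiN$ is decomposable, consistent with the remark in the text.

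The second step is to substitute into \teqref{eqn:sigmatau}. This gives
$$
\sigma(t) = -\frac{-(1-2t)^2}{1-t+t^2} = \frac{(1-2t)^2}{1-t+t^2}.
$$
For $\tau$, the numerator factors as $\Psi_1-\Psi_3 = (1-t)\bigl(t-(1-t)\bigr) = -(1-t)(1-2t)$. Dividing by $-(1-2t)^2$ yields
$$
\tau(t) = \frac{1-t}{1-2t}\qquad\text{for } t\neq\nicefrac12 .
$$
By \lemref{lem:decompositionSigmaTau}, the identity $\bPhiN(t) = \bPhiS(\sigma(t))\circ\bPhiA(\tau(t))$ then holds for every $t\neq\nicefrac12$.

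The only delicate point is the claim ``for all $t\in\RR$'', because $\tau$ is undefined at $t=\nicefrac12$. At that parameter $\sigma(\nicefrac12)=0$, so $\bPhiS(0)=(1,1,1)^T$. A direct check with \teqref{eqn:concatenationFactor} shows that concatenating $(1,1,1)^T$ with any aliquot triple $(0,1-\tau,\tau)^T$ gives $(1,1,1)^T$. This is projectively equal to $\bPhiN(\nicefrac12)=(\nicefrac14,\nicefrac14,\nicefrac14)^T$. So the decomposition holds there for any choice of $\tau$, and by continuity one may also read $\tau(\nicefrac12)$ as the point at infinity. I expect no genuine obstacle beyond this degenerate-point bookkeeping; the rest is routine algebra.
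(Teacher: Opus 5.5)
Your proposal is correct and takes the same route as the paper: the corollary follows by substituting the nedian generating functions into the formulas for $\sigma$ and $\tau$ in \lemref{lem:decompositionSigmaTau}, and your computations $\delta_{\bPhiN}(t) = -(1-2t)^2$, $\sigma(t) = \frac{(1-2t)^2}{1-t+t^2}$ and $\tau(t) = \frac{1-t}{1-2t}$ are right. Your explicit decomposability check and your treatment of $t=\nicefrac12$ (where $\sigma=0$ and any $\tau$ gives $(1,1,1)^T$, matching the lemma's own caveat) fill in details the paper leaves implicit.
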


\section{Triangle curves}\label{sec:trianglecurves}

With these preparations in place, we now turn our attention to curves that are the locus of a triangle center in $\bPsi$-triangle family: 

\begin{definition} We consider curves $t \mapsto \bX_{\psi}^\bPsi(t)$ traced by triangle centers $\bX_{\psi}$ in a $\bPsi(t)$-triangle family. We refer to these as \emph{triangle curves}. 
\end{definition}

\begin{remark}
    While we implicitly assume, as with triangle center functions, that triangle families are continuous and generally produce continuous curves, this is not  a prerequisite.
\end{remark}

\begin{remark}
Since $\bPsi$-triangle families of equilateral triangles consist of equilateral triangles (see \lemref{lem:propertiesFamilies}), and traceable triangle centers coincide with the centroid (see \secref{sec:essentiallDifferentTriangleCenters}), we exclude equilateral triangles from the following discussion.
\end{remark}

\begin{figure}[t]
\begin{subfigure}{\textwidth}
        \centering
        \includegraphics[width=0.32\textwidth, trim = 0.4cm 0cm 0.4cm 0cm, clip]{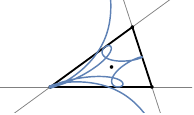}
         \includegraphics[width=0.32\textwidth , trim = 0.4cm 0cm 0.4cm 0cm, clip]{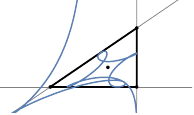}
          \includegraphics[width=0.32\textwidth, trim = 0.4cm 0cm 0.4cm 0cm, clip]{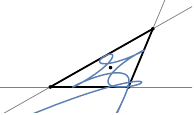}
        \caption{Trace of a generic triangle center ($\bX_{59}$). }
        \label{fig:triangle_curves_gen}
    \end{subfigure}
    
    \vspace{3mm}
    
    \begin{subfigure}{\textwidth}
        \centering
        \includegraphics[width=0.32\textwidth, trim = 0.4cm 0cm 0.4cm 0cm, clip]{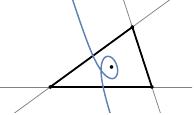}
         \includegraphics[width=0.32\textwidth, trim = 0.4cm 0cm 0.4cm 0cm, clip]{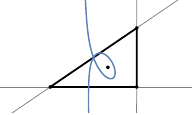}
          \includegraphics[width=0.32\textwidth, trim = 0.4cm 0cm 0.4cm 0cm, clip]{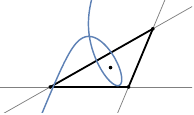}
        \caption{Trace of a semi-invariant curve-generating center ($\bX_3$). }
        \label{fig:triangle_curves_semi}
    \end{subfigure}

    \vspace{3mm}
    
    \begin{subfigure}{\textwidth}
        \centering
         \includegraphics[width=0.32\textwidth, trim = 0.4cm 0cm 0.4cm 0cm, clip]{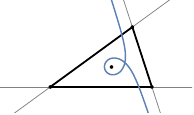}
         \includegraphics[width=0.32\textwidth, trim = 0.4cm 0cm 0.4cm 0cm, clip]{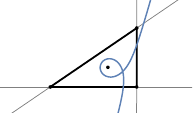}
          \includegraphics[width=0.32\textwidth, trim = 0.4cm 0cm 0.4cm 0cm, clip]{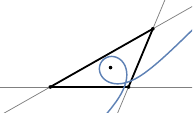}
        \caption{Trace of an invariant triangle curve-generating center ($\bX_{13}$). }
        \label{fig:triangle_curves_inv}
    \end{subfigure}
\caption{Illustration of triangle curves for different triangles of $\bPhiA$ triangle families. The centrally located points represent the triangle's centroids. }\label{fig:triangleCurvesIll}
\end{figure}

\subsection{Properties of triangle curves}\label{sec:propertiesOfTriangleCurves}

Given a triangle center and a triangle family, we distinguish between local and global properties of the resulting triangle curve. Local properties focus on specific configurations and selected parameter values, such as which triangle centers are interpolated by the triangle curve. On the other hand, global properties analyze the overall geometry of the triangle curve.

For some triangle families, it is relatively straightforward to determine the resulting triangle curves. For example, the triangle family of scaled triangles $\bPhiS$ in \teqref{eqn:scaling}, the curves $\bX_{\psi}^{\bPhiS}(t)$ parametrize lines connecting $\bX_{\psi}$ to $\bX_2$.
While some triangle curves exhibit visual similarities, they generally depend on the shape of the underlying triangle, see \figref{fig:triangle_curves_gen}.

\subsubsection{Local properties}
In the following section, we present local features of triangle curves, \ie, properties corresponding to specific parameter values,  beginning with a property that follows directly from their definition:

\begin{proposition}
If $\Delta_t^{\bPsi} = \Delta$, then $\bX_{\psi}^{\bPsi}(t) = \bX_{\psi}$.
\end{proposition}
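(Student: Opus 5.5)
This follows directly from the definitions, so the plan is to unwind them. By definition, $\bX_{\psi}^{\bPsi}(t)$ is the triangle center $\bX_\psi$ evaluated on the triangle $\Delta_t^{\bPsi}$. The point is determined only by the triangle: we take its side lengths, evaluate the trilinear coordinates $[\psi(a_t,b_t,c_t):\psi(b_t,c_t,a_t):\psi(c_t,a_t,b_t)]$, and convert them to a point via the vertices of $\Delta_t^{\bPsi}$. So it suffices to check that equality of triangles makes each of these ingredients agree.

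\textbf{Step 1.} Interpret the hypothesis $\Delta_t^{\bPsi} = \Delta$ as equality of the ordered vertex triples, $(\bA_t^{\bPsi},\bB_t^{\bPsi},\bC_t^{\bPsi}) = (\bA,\bB,\bC)$. This is the notion of interpolating used in the paper's definition of $\bPsi$-triangle families. Under this reading the side lengths agree, $(a_t,b_t,c_t)=(a,b,c)$, since each is the distance between corresponding vertices.

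\textbf{Step 2.} It follows that the trilinear coordinates agree. Since $\psi$ is evaluated on identical side lengths and the point is reconstructed from identical vertices, the resulting point equals $\bX_\psi$.

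\textbf{Step 3.} Handle the subtlety that in \propref{prop:families01} the family reproduces $\Delta$ only up to a cyclic relabeling of vertices, e.g.\ $\Delta_0^{\bPhiA}=(\bB,\bC,\bA)$. I would note that the statement still holds in that case, because triangle centers are invariant under cyclic relabeling of the vertices. A cyclic shift permutes $(a,b,c)$ cyclically and permutes the trilinear coordinates in the same way. The pairing of each coordinate with its vertex in the conversion formula $\bP = (a x_1\bA + b x_2\bB + c x_3\bC)/(ax_1+bx_2+cx_3)$ is therefore preserved, so the point is unchanged.

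There is no real obstacle here. The only point needing care is to make precise what ``$\Delta_t^{\bPsi}=\Delta$'' means, in particular whether it allows a cyclic relabeling. Step 3 covers the cyclic case, and I would remark that this is the case actually occurring for the aliquot and nedian families.
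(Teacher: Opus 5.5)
Your proposal is correct and matches the paper, which gives no separate proof and only remarks that the property follows directly from the definition of triangle curves. Your Step 3 is a worthwhile addition: it checks that a cyclic relabeling of the vertices permutes $(a,b,c)$ and the trilinear coordinates compatibly, so the point is unchanged. This is exactly the case needed for \propref{prop:propAliquotNedian}(i), where, e.g., $\Delta_0^{\bPhiA}=(\bB,\bC,\bA)$.
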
 

However, in general, local properties depend on the triangle family or triangle center. The following lemma summarizes features of triangle curves derived from the corresponding properties of the aliquot and nedian triangle families, as presented in \propref{prop:families01}.

\begin{proposition}\label{prop:propAliquotNedian}
Let $\bPhiA$ represent the aliquot triangle family described in Equation~\eqref{eqn:aliquot},  $\bPhiN$ the nedian triangle family in Equation~\eqref{eqn:nedian}, and let $\psi\in\cT$ be a triangle center function. 
\begin{enumerate}
    \item $\bX_{\psi} = \bX_{\psi}^{\bPhiA}(0) = \bX_{\psi}^{\bPhiA}(1) = \bX_{\psi}^{\bPhiN}(0) = \bX_{\psi}^{\bPhiN}(1)$.
    \item $\bX_{\psi}^{\bPhiA}\left(\nicefrac{1}{2}\right)$ is the midpoint of $\bX_2$ and $\bX_{\psi}^{\bPhiA}(0)$, inverted through $\bX_2$.
    \item  $\bX_{\psi}^{\bPhiA}\left(\nicefrac{1}{3}\right)$ and $\bX_{\psi}^{\bPhiA}\left(\nicefrac{2}{3}\right)$ related by an inversion through $\bX_2$.
    \item  $\bX_{\psi}^{\bPhiN}\left(\nicefrac{1}{2}\right) = \bX_2$.
\end{enumerate}
\end{proposition}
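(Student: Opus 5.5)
The plan is to reduce every item to \propref{prop:families01} by using that triangle centers are equivariant under similarities and also under relabelings of the vertices that preserve the cyclic order. The definition of $\bX_\psi$ is invariant under cyclic relabeling, because the trilinear coordinates $[\psi(a,b,c):\psi(b,c,a):\psi(c,a,b)]$ permute cyclically together with the vertices. It is also equivariant under translations, rotations, reflections and uniform scalings, as remarked after \defref{def:TriangleCenter}. Reflection in a point is a rotation by $\pi$, and a homothety centred at $\bX_2$ with ratio $\nicefrac12$ is a uniform scaling. So if two triangles are related by such a map, possibly after a cyclic relabeling of the vertices, then their centers are related by the same map.

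For (i), \propref{prop:families01}(i) gives $\Delta_0^{\bPhiA}=(\bB,\bC,\bA)$, $\Delta_1^{\bPhiA}=(\bC,\bA,\bB)$, $\Delta_0^{\bPhiN}=(\bC,\bA,\bB)$ and $\Delta_1^{\bPhiN}=(\bA,\bB,\bC)$. Each of these is a cyclic relabeling of $\Delta$ with the same side lengths cyclically permuted. By the cyclic invariance just noted, the center in each case is $\bX_\psi$. I will verify this explicitly once with barycentric coordinates $[a\psi(a,b,c):b\psi(b,c,a):c\psi(c,a,b)]$ under the substitution $(a,b,c)\mapsto(b,c,a)$.

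For (ii), \propref{prop:families01}(ii) says $\Delta_{1/2}^{\bPhiA}$ is the image of $\Delta$ under the map $\bP\mapsto \bX_2-\tfrac12(\bP-\bX_2)$, again up to cyclic relabeling. This map is a similarity, so $\bX_\psi^{\bPhiA}(\nicefrac12)$ is the image of $\bX_\psi=\bX_\psi^{\bPhiA}(0)$ under it. That image is the midpoint of $\bX_2$ and $\bX_\psi$, reflected through $\bX_2$.

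For (iii), \propref{prop:families01}(iii) relates $\Delta_{1/3}^{\bPhiA}$ and $\Delta_{2/3}^{\bPhiA}$ by point reflection in $\bX_2$, and equivariance transfers this relation to the centers.

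For (iv), \propref{prop:families01}(iv) shows that $\Delta_{1/2}^{\bPhiN}$ collapses to the single point $\bX_2$. The center is then to be read as that point: any affine combination of coincident vertices equals $\bX_2$, and by continuity it is the limit of the curve.

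The main obstacle is keeping track of the vertex labelings. In each case I have to confirm that the relabeling is cyclic and not a transposition; this matters only for non-bisymmetric behaviour, but it is what makes the trilinear coordinates match. I will check it directly from the barycentric generating functions, for example $\bPhiA(\nicefrac12)=(0,\tfrac12,\tfrac12)$. The degenerate case (iv) also needs this convention stated explicitly, since $\triangle$ excludes degenerate triangles.
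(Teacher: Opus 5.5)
Your proposal is correct and is essentially the paper's argument. Each item is read off from the corresponding statement about the triangles in \propref{prop:families01} and transported to the centers by equivariance of $\bX_\psi$ under similarities and vertex relabelings, with (iv) taken by the convention that a triangle collapsed to $\bX_2$ has its centers there. One simplification: you need not check that the relabelings are cyclic rather than transpositions, since bisymmetry of $\psi$ already makes $\bX_\psi$ invariant under every relabeling. A direct check gives $\Delta_1^{\bPhiN}=(\bB,\bC,\bA)$ rather than the stated labeling, which is therefore harmless.
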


\subsubsection{Semi-invariant triangle curves}

Next, we examine two features in the study of global properties of triangle curves, beginning with the concept of \emph{semi-invariance}, see \figref{fig:triangle_curves_semi}.

\begin{definition}
If, for all triangles $\Delta \in \triangle$, the curves $t \mapsto \bX_{\psi}^{\bPsi}(t)$ differ only by an affine transformation, we refer to the resulting triangle curves as \emph{semi-invariant}, and the corresponding triangle center $\bX_{\psi}$ as a \emph{semi-invariant curve-generating center} with 
respect to $\bPsi$. 
\end{definition}

Note that this is a non-trivial requirement. For most triangle centers, the connection between the center and the triangle is of higher order (depending on the triangle's side lengths), and the location of triangle centers is not generally preserved under affine transformations. Although it might initially seem counterintuitive that examples of non-trivial curves exist, \secref{sec:mainresult} presents four two-parameter families of triangle centers that are related to two
classical algebraic curves through affine transformations.

The next proposition highlights a consequence of \lemref{lem:combinationsConstantAffine} that the existence of a non-trivial semi-invariant curve-generating center $\bX_{\psi}$ with respect to a triangle family $\bPsi$ implies the existence of a one-parameter family of semi-invariant curve-generating centers with respect to the same family. Equivalently, this also implies the existence of a one-parameter family of triangle families $\bPsi_{\sigma}(t)$ for which $\bX_{\psi}$ is a semi-invariant curve-generating center.

\begin{proposition}\label{prop:ScalingCentersAndTrafos}
For $\psi\in\cT$, let $\bX_{\psi}$ be a semi-invariant curve-generating center with respect to the family $\bPsi(t)$, and let $\psi_{\sigma}$ correspond to the following constant-affine combination of $\bX_{\psi}$ and the centroid $\bX_2$, that is,  
$$
\psi_{\sigma}(a,b,c) = (1-\sigma)\frac{\psi(a,b,c)}{\nmlz{\psi}(a,b,c)} + \sigma \frac{\phi_2(a,b,c)}{\nmlz{\phi_2}(a,b,c)}.
$$
for $\sigma\in\RR\backslash\{0,1\}$.

\begin{itemize}
\item  If $\nmlz{\psi_{\sigma}}\neq 0$, the triangle centers corresponding to $\psi_{\sigma}$ 
are semi-invariant curve-generating centers, with
\begin{equation}\label{eqn:psisigma}
\bX_{\psi_{\sigma}}^{\bPsi}(t) = (1-\sigma) \bX_{\psi}^{\bPsi}(t) + \sigma \bX_2.
\end{equation}
\item Equivalently, $\bX_{\psi}$ is a semi-invariant curve-generating triangle center function with respect to the triangle families $\bPsi_{\sigma} = \bPhiS(\sigma)\circ \bPsi$, where $\bPhiS(\sigma)$ denotes the scaling w.r.t.\ $\bX_2$ by the scalar factor $\sigma \in \RR\backslash\{0,1\}$ as described in Equation~\eqref{eqn:scaling}.
\end{itemize}
The resulting one-parameter families of semi-invariant triangle curves relate by
\begin{equation*}
\bX_{\psi_{\sigma}}^{\bPsi}(t) = \bX_{\psi}^{\bPsi_{\sigma}}(t).
\end{equation*}
Additionally, it follows from \lemref{lem:essDifferentConstAffine} that if $\bX_{\psi}$ is essentially different from $\bX_2$, the triangle centers $\bX_{\psi_{\sigma}}$ are essentially different from $\bX_{\psi}$ and $\bX_2$, and for $\sigma \neq \overline{\sigma}$, $\bX_{\psi_{\sigma}}$ is essentially different from $\bX_{\psi_{\overline{\sigma}}}$. 
\end{proposition}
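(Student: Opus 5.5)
The plan is to work entirely at the level of normalized barycentric coordinates, since everything in the statement is affine in points.

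\emph{Step 1: location of the combined center.} The normalized functions $\psi/\nmlz{\psi}$ and $\phi_2/\nmlz{\phi_2}$ have degree $-1$, so $\psi_\sigma$ is a well-defined constant-affine combination. Its coefficients satisfy $(1-\sigma)+\sigma=1$. By \propref{lem:constaffineCenter} and the remark following it, $\psi_\sigma$ is then normalized, and for every triangle
\[
\bX_{\psi_\sigma}=(1-\sigma)\bX_\psi+\sigma\bX_2 .
\]
The hypothesis $\nmlz{\psi_\sigma}\neq0$ is consistent with this, since normalization gives $\nmlz{\psi_\sigma}\equiv 1$ wherever both traces exist.

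\emph{Step 2: the first bullet.} Apply Step 1 to the triangle $\Delta_t^{\bPsi}$. By \lemref{lem:propertiesFamilies}(i), the centroid of $\Delta_t^{\bPsi}$ is the centroid $\bX_2$ of $\Delta$. This gives \eqref{eqn:psisigma} directly. The map $P\mapsto(1-\sigma)P+\sigma\bX_2$ is a homothety. Now take two reference triangles $\Delta,\Delta'$ and the affine map $T$ with $\bX^{\bPsi}_\psi(t;\Delta')=T(\bX^{\bPsi}_\psi(t;\Delta))$. Then the $\psi_\sigma$-curves are related by $H'\circ T\circ H^{-1}$, which is again affine, so semi-invariance holds. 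Here $H,H'$ are the homotheties centered at the respective centroids.

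\emph{Step 3: the second bullet and the identity $\bX_{\psi_\sigma}^{\bPsi}=\bX_\psi^{\bPsi_\sigma}$.} I would use the interpretation of $\bPhiS(s)$ as the homothety of ratio $-s$ about $\bX_2$. The normalized barycentric vertex $\frac13(1+2s,1-s,1-s)$ equals $(1-s)\cdot\frac13(1,1,1)+s\,(1,0,0)$, so $\bPhiS(s)$ maps each vertex $P$ to $s P+(1-s)\bX_2$. Hence $\bPhiS(s)$ is the homothety of ratio $s$ about $\bX_2$. I will match $s$ to $1-\sigma$ (or adjust the sign convention as the statement intends). Because concatenation corresponds to applying the families successively, $\Delta_t^{\bPsi_\sigma}$ is the image of $\Delta_t^{\bPsi}$ under this homothety about the common centroid. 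Triangle centers are similarity-equivariant, so $\bX_\psi$ of the scaled triangle is the scaled $\bX_\psi$, namely $(1-\sigma)\bX^{\bPsi}_\psi(t)+\sigma\bX_2$. Comparing with Step 2 gives the identity, and semi-invariance with respect to $\bPsi_\sigma$ follows. The main obstacle is exactly this bookkeeping. One must verify that the concatenation order in \eqref{eqn:concatenationFactor} really composes the homothety after $\bPsi$ (commutativity from \lemref{lem:proptriangleConcatenation} helps). One must also pin down the parameter convention, $\sigma$ versus $1-\sigma$, consistently with \teqref{eqn:scaling}; I would check it by direct substitution into \eqref{eqn:concatenationFactor}.

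\emph{Step 4: essential difference.} This is immediate from \lemref{lem:essDifferentConstAffine}. The coefficients $1-\sigma$ and $\sigma$ are nonzero for $\sigma\notin\{0,1\}$. Moreover, distinct $\sigma$ give distinct ratios $\frac{1-\sigma}{\sigma}$.
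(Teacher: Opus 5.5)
Your proposal is correct and takes the same route the paper intends. The paper gives no separate proof and presents the statement as a direct consequence of \propref{lem:constaffineCenter} applied to the normalized functions $\psi/\nmlz{\psi}$ and $\phi_2/\nmlz{\phi_2}$ with weights $1-\sigma,\sigma$, together with centroid preservation (\lemref{lem:propertiesFamilies}) and \lemref{lem:essDifferentConstAffine}. You should drop the hedge in Step~3: your computation that $\bPhiS(s)$ is the homothety of ratio $s$ about $\bX_2$ agrees with \lemref{lem:inversesbPhi} and \lemref{lem:decompositionSigmaTau}, so it shows the true identity is $\bX_{\psi_\sigma}^{\bPsi}=\bX_{\psi}^{\bPhiS(1-\sigma)\circ\bPsi}$, and the statement's version with $\bPhiS(\sigma)$ fails as written (already for $\bPsi=\bPhiID$) and needs $\sigma\mapsto 1-\sigma$, which leaves the semi-invariance claims unaffected because $\RR\setminus\{0,1\}$ is invariant under this substitution.
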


Note the importance of combining two normalized triangle center functions in order to ensure that the coefficients of the linear combination in \teqref{eqn:psisigma} do not depend on the shape of the triangle (unlike in \teqref{eqn:psiomega}). In the next subsection, we use \propref{prop:ScalingCentersAndTrafos} to  extend families of semi-invariant triangle centers by an additional parameter.
To this end, we define:

\begin{definition}\label{def:scaledfamilies}
Given a family of triangle-center functions $\Omega$, we define its \emph{scaled family} as
\begin{equation}\label{eqn:OmegaSigma}
\Omega_{\sigma} = \left\{ \psi_{\sigma} \mid \psi \in \Omega, \sigma \in \RR\right\}.
\end{equation}
\end{definition}

The above \propref{prop:ScalingCentersAndTrafos} shows that semi-invariance of a center with respect to a nontrivial triangle family automatically extends to a one-parameter family of triangle families. 
 In the decomposable case, this observation can be strengthened further:

\begin{lemma}\label{lem:semiInvariantCombo}
    If $\bX_{\psi}$ is semi-invariant with respect to a triangle family $\bPsi$, then it remains semi-invariant with respect to $\bPhiS(\sigma(t)) \circ \bPsi(\tau(t))$ for arbitrary scalar-valued functions $\sigma(t)$ and $\tau(t)$.

\end{lemma}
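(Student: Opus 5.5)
The statement has two independent ingredients: a reparametrization $t\mapsto\tau(t)$ of the family, and a pointwise scaling about the centroid by $\bPhiS(\sigma(t))$. I will handle them separately, reading semi-invariance as a statement about the family of maps $t\mapsto\bX_{\psi}^{\bPsi}(t)$. Concretely, for any two triangles $\Delta,\Delta'$ there is an affine map $\bL$ with
\[
\bX_{\psi}^{\bPsi}(t;\Delta')=\bL\bigl(\bX_{\psi}^{\bPsi}(t;\Delta)\bigr)\quad\text{for all }t.
\]
This parametrized reading is how the curves are compared throughout the section, and it is stronger than equality of image sets.

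For the \emph{reparametrization}, I substitute $\tau(t)$ for $t$ in the displayed identity. This immediately gives
\[
\bX_{\psi}^{\bPsi\circ\tau}(t;\Delta')=\bL\bigl(\bX_{\psi}^{\bPsi\circ\tau}(t;\Delta)\bigr),
\]
with the same affine map $\bL$ for every $t$. No property of $\tau$ is needed, because the map $\bL$ does not depend on $t$.

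For the \emph{scaling}, I need to see what the family $\bPhiS(s)\circ\bPsi(\tau(t))$ does to a center. By \defref{deflemma:conctrianglefamilies} and \lemref{lem:proptriangleConcatenation}, this family first forms the triangle $\Delta_{\tau(t)}^{\bPsi}$ and then applies $\bPhiS(s)$ to it. By \lemref{lem:propertiesFamilies}(i), $\Delta_{\tau(t)}^{\bPsi}$ has the same centroid $\bX_2$ as $\Delta$, so the result is the image of $\Delta_{\tau(t)}^{\bPsi}$ under a homothety $h_s$ centered at $\bX_2$. The factor is $-s$ in the parametrization \eqref{eqn:scaling}, since $\bA\mapsto\frac13((1+2s)\bA+(1-s)(\bB+\bC))=\bX_2-s(\bA-\bX_2)$. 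Triangle centers are equivariant under similarities, as noted after \defref{def:TriangleCenter}. Hence
\[
\bX_{\psi}^{\bPhiS(\sigma(t))\circ\bPsi(\tau(t))}(t)=\bX_2-\sigma(t)\bigl(\bX_{\psi}^{\bPsi}(\tau(t))-\bX_2\bigr).
\]
A negative factor is a point reflection composed with a positive scaling, and both are similarities, so equivariance still applies. I need $\sigma(t)\neq0$ so that the triangle is nondegenerate. If $\sigma(t)=0$, the triangle collapses to $\bX_2$ and I define the center value as $\bX_2$, so the formula holds as a convention.

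Now I take the affine map $\bL$ for $\bPsi$ relating $\Delta$ and $\Delta'$. The key observation is that $\bL$ sends the centroid of $\Delta$ to the centroid of $\Delta'$. I would justify this by noting that the semi-invariance relation forces it: the map $\bL$ between curves can be chosen to be the unique affine map $\bM$ with $\bM(\bA)=\bA'$, $\bM(\bB)=\bB'$, $\bM(\bC)=\bC'$, which fixes barycentric combinations and so carries $\bX_2$ to $\bX_2'$. With that choice, affine maps commute with homotheties about corresponding centers:
\[
\bL\bigl(\bX_2-\sigma(\bP-\bX_2)\bigr)=\bX_2'-\sigma\bigl(\bL(\bP)-\bX_2'\bigr).
\]
Substituting $\bP=\bX_{\psi}^{\bPsi}(\tau(t);\Delta)$ gives the semi-invariance of the composite family, with the same map $\bL$.

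The main obstacle is exactly this compatibility of $\bL$ with the centroid. The definition of semi-invariance only asserts that \emph{some} affine map exists, not that it is the vertex map $\bM$. If the given $\bL$ does not fix the centroid correspondence, I would argue instead that the curves $\bX_{\psi}^{\bPsi}$ generally pass through $\bX_2$, or are symmetric about it, in a way that pins down the image of $\bX_2$. Failing that, I would strengthen the hypothesis, as the paper's constructions implicitly do (cf.\ \propref{prop:ScalingCentersAndTrafos}, where $\bX_2$ is tracked explicitly), to require that $\bL$ maps centroid to centroid.
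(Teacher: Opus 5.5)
Your argument is the paper's: reparametrizing by $\tau$ is free because the affine correspondence does not depend on $t$, and $\bPhiS(\sigma(t))$ acts as a homothety about $\bX_2$, which commutes with any affine correspondence that matches centroids. The paper resolves your ``main obstacle'' exactly as your last fallback does, though it does so silently. It writes a semi-invariant curve as $\bX_2 + l_x(t)\bV_x + l_y(t)\bV_y$ with triangle-independent $l_x,l_y$, so the affine frame is based at the centroid (cf.\ the remark that curves are compared after making $\bX_2$ coincide). Requiring $\bL(\bX_2)=\bX_2'$ is therefore part of how the paper uses semi-invariance, not an extra hypothesis, and with it your proof is complete.

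Discard the proposed justification that $\bL$ can be taken to be the vertex map $\bM$. That step is not merely unjustified; it is false for the centers of interest. Since $\bM$ carries each $\Delta^{\bPsi}_t$ to the corresponding member of the family of $\Delta'$, choosing $\bL=\bM$ would require the center to commute with affine maps, and among traceable centers only $\bX_2$ does. For example, $\bX_3$ is semi-invariant for $\bPhiA$ (\thmref{thm:main}). By \propref{prop:propAliquotNedian}(i), any parametrized correspondence must send the circumcenter of $\Delta$ to that of $\Delta'$, which $\bM$ does not do in general.

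Your idea (b), by contrast, can be made rigorous for the aliquot family. By \propref{prop:propAliquotNedian}(ii), $\bX_2=\tfrac13\bX_\psi^{\bPhiA}(0)+\tfrac23\bX_\psi^{\bPhiA}(\tfrac12)$, an affine combination of curve points with triangle-independent weights. Hence \emph{every} parametrized affine correspondence for $\bPhiA$ matches centroids. Combined with \lemref{lem:decompositionSigmaTau}, your argument then gives \corref{cor:nedianbecausealiquot} with no extra assumption. For an arbitrary $\bPsi$ no such identity is available, which is why the centroid-based reading is needed.

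Finally, there is a harmless sign slip: $\bPhiS(s)$ sends $\bA$ to $\bX_2+s(\bA-\bX_2)$, not $\bX_2-s(\bA-\bX_2)$. Indeed $\bPhiS(1)\cong\bPhiID$, in agreement with the paper's formula $\bX_2+\sigma(t)(\cdots)$.
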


\begin{proof}
If $\bX_{\psi}$ is a semi-invariant curve-generating center with respect to $\bPsi(t)$, the corresponding curve allows a parametrization of the form 
$$
\bX_{\psi}^{\bPsi}(t) = \bX_2 + l_x(t) \bV_x + l_y(t) \bV_y, 
$$
where the two vectors $\bV_x$ and $\bV_y$  correspond to sheared axes of the unsheared curve $(l_x(t), l_y(t))$ and depend the triangle's configuration.

Changing the parametrization of $\bPsi$ affects the triangle center curve as follows
$$
\bX_{\psi}^{\bPsi(\tau)}(t) = \bX_2 + l_x(\tau(t)) \bV_x + l_y(\tau(t)) \bV_y.
$$
Since this curve still corresponds to an unsheared curve $(l_x(\tau(t)), l_y(\tau(t)))$ that does not depend on the triangle's configuration, the triangle center $\bX_{\psi}$ is still semi-invariant with respect to $\bPsi(\tau(t))$.

Concatenating the triangle family with a scale family with scale factor $\sigma(t)$ changes the triangle curve to 
$$
\bX_{\psi}^{\bPhiS(\sigma)\circ \bPsi(\tau)}(t) = \bX_2 +  \sigma(t)\left(l_x(\tau(t)) \bV_x + l_y(\tau(t)) \bV_y\right).
$$
This curve corresponds to the unsheared curve $\sigma(t)\left(l_x(\tau(t)), l_y(\tau(t))\right)$  and consequently the triangle center $\bX_{\psi}$ is a semi-invariant with respect to $\bPhiS(\sigma(t))\circ\bPsi(\tau(t))$.
\end{proof}

Note that, for a generic semi-invariant curve-generating center, this lemma significantly broadens the class of triangle families with respect to which the center remains semi-invariant. The following corollary highlights a special case that follows directly from \lemref{lem:decompositionSigmaTau}.

\begin{corollary}\label{cor:nedianbecausealiquot}
If $\bX_{\psi}$ is a semi-invariant curve-generating center with respect to the aliquot triangle family $\bPhiA$, then it is also semi-invariant with respect to all decomposable triangle families, including the nedian family $\bPhiN$.
\end{corollary}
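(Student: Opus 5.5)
The corollary follows by chaining \lemref{lem:decompositionSigmaTau} and \lemref{lem:semiInvariantCombo}. Let $\bPsi$ be an arbitrary decomposable triangle family. By \lemref{lem:decompositionSigmaTau}, for every $t$ with $\Psi_1(t),\Psi_2(t),\Psi_3(t)$ not all equal we have
$$
\bPsi(t) = \bPhiS(\sigma(t))\circ\bPhiA(\tau(t)),
$$
with $\sigma$ and $\tau$ the explicit scalar functions from \teqref{eqn:sigmatau}. Since $\bX_{\psi}$ is assumed semi-invariant with respect to $\bPhiA$, \lemref{lem:semiInvariantCombo} applied with $\bPsi=\bPhiA$ gives semi-invariance with respect to $\bPhiS(\sigma(t))\circ\bPhiA(\tau(t))$. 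This family coincides with the given $\bPsi$, so $\bX_{\psi}$ is semi-invariant with respect to $\bPsi$.

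The case of the nedian family is then immediate. Either note that $\bPhiN$ is decomposable and apply the above, or use \corref{cor:nediantoaliquot} directly: it gives $\sigma,\tau$ in closed form and asserts the decomposition for all $t\in\RR$.

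The only genuine obstacle is the exceptional set of parameters where $\Psi_1(t)=\Psi_2(t)=\Psi_3(t)$, where $\tau(t)$ is undefined. At such $t$ the triangle $\Delta_t^{\bPsi}$ degenerates to the centroid. For the nedian family this happens at $t=\nicefrac{1}{2}$; by \propref{prop:propAliquotNedian}(iv), and by the convention of normalized barycentric coordinates, the curve point there is $\bX_2$.

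I would handle these points by extending the parametrization used in the proof of \lemref{lem:semiInvariantCombo}. Write $\bX_{\psi}^{\bPsi}(t)=\bX_2+\sigma(t)\bigl(l_x(\tau(t))\bV_x+l_y(\tau(t))\bV_y\bigr)$ and assign the value $\bX_2$ (that is, $\sigma(t)=0$) at the exceptional parameters. This value corresponds to the origin of the unsheared curve for every triangle, so the same affine map still relates the curves for different initial triangles. Hence the exceptional points do not break semi-invariance, and the corollary follows.
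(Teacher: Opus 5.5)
Your proposal is correct and matches the paper's argument, which obtains the corollary by writing a decomposable family as $\bPhiS(\sigma(t))\circ\bPhiA(\tau(t))$ via \lemref{lem:decompositionSigmaTau} and then applying \lemref{lem:semiInvariantCombo}. Your separate treatment of the parameters with $\Psi_1(t)=\Psi_2(t)=\Psi_3(t)$ is a detail the paper leaves implicit: there $\tau$ is undefined, and you assign the curve the value $\bX_2$, consistent with \propref{prop:propAliquotNedian}(iv).
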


\subsubsection{Invariant triangle curves}

A particularly intriguing special case of semi-invariant triangle curves are those that do not shear, see  \figref{fig:triangle_curves_inv}.

\begin{definition}
If, for all triangles $\Delta \in \triangle$, the curves  $\bX_{\psi}^{\bPsi}(t)$ differ only by translation, rotation, scaling, or reflection, we refer to the triangle curves as \emph{invariant}, and the corresponding triangle center $\bX_{\psi}$ as an \emph{invariant curve-generating center} with respect to $\bPsi$.
\end{definition}

\begin{remark}
Since $\bPsi$-triangle families preserve the centroid (\lemref{lem:propertiesFamilies}), all curves $t\mapsto \bX^\bPsi_\psi(t)$ are naturally compared after translating so that $\bX_2$ coincides.
\end{remark}

While the existence of invariant curve-generating centers might be even more surprising, we highlight positive results by Mundilova~\cite{Mundilova:2024:MTT}, who demonstrates that $\bX_{13}$ and $\bX_{15}$ are invariant curve-generating centers with respect to the aliquot triangle family. In \secref{sec:mainresult}, we extend these results and provide four one-parameter families of triangle centers that are invariant curve-generating centers with respect to decomposable triangle families.

The properties of semi-invariant curve-generating centers highlighted in \propref{prop:ScalingCentersAndTrafos} and \lemref{lem:semiInvariantCombo} can be strengthen for invariant centers as:

\begin{proposition}\label{prop:ScalingCentersAndTrafosInv}
  If $\Omega_{\inv}$ is a set of triangle centers that are invariant with respect to a triangle family $\bPsi$, then $\Omega_{\inv,\sigma}$ is a set of invariant curve-generating centers with respect to $\bPsi$ as well.
\end{proposition}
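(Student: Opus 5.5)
The plan is to reduce the statement to \propref{prop:ScalingCentersAndTrafos}, specifically to the explicit formula \eqref{eqn:psisigma}. Let $\psi\in\Omega_{\inv}$ and $\sigma\in\RR$ with $\psi_\sigma$ traceable. The cases $\sigma=0$ and $\sigma=1$ are trivial: $\psi_0\cong\psi$ is invariant by hypothesis, and $\psi_1\cong\phi_2$ gives the constant curve $\bX_2$, which is invariant in a degenerate sense. For all other $\sigma$, \propref{prop:ScalingCentersAndTrafos} gives
\[
\bX_{\psi_\sigma}^{\bPsi}(t)-\bX_2=(1-\sigma)\bigl(\bX_{\psi}^{\bPsi}(t)-\bX_2\bigr).
\]
So each curve generated by $\psi_\sigma$ is the image of the curve generated by $\psi$ under the homothety with center $\bX_2$ and ratio $1-\sigma$. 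The important point is that this ratio depends only on $\sigma$ and not on the triangle $\Delta$. That holds because $\psi_\sigma$ is built from the normalized functions $\psi/\nmlz{\psi}$ and $\phi_2/\nmlz{\phi_2}$, so the barycentric weights are constant.

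Next, fix two triangles $\Delta,\Delta'\in\triangle$. Following the remark after the definition of invariance, I compare curves after translating so that the centroids coincide. By invariance of $\psi$, there is a similarity $S$, built from rotation, reflection, uniform scaling and translation, that maps the curve of $\psi$ for $\Delta$ onto the curve of $\psi$ for $\Delta'$. Writing $H_{\Delta}$ for the homothety about $\bX_2(\Delta)$ with ratio $1-\sigma$, the curve of $\psi_\sigma$ for $\Delta'$ is
\[
H_{\Delta'}\circ S\circ H_{\Delta}^{-1}
\]
applied to the curve of $\psi_\sigma$ for $\Delta$. This map is a composition of similarities and is therefore itself a similarity. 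Concretely, after placing both centroids at the origin, $H_{\Delta}$ and $H_{\Delta'}$ are the same scalar multiple of the identity, and a scalar multiple of the identity commutes with the linear part of $S$.

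The only delicate step is checking that $S$ respects the parametrization and the centroid. I would argue this through the representation used in the proof of \lemref{lem:semiInvariantCombo}, namely $\bX_2+l_x(t)\bV_x+l_y(t)\bV_y$. Invariance means that, for different triangles, the pairs $\bV_x,\bV_y$ differ by a conformal linear map, so no shear is involved. Multiplying both sides by the scalar $1-\sigma$ keeps $\bV_x,\bV_y$ fixed and replaces $(l_x,l_y)$ with $(1-\sigma)(l_x,l_y)$, a change that does not depend on $\Delta$. Hence no shear is introduced, and $\psi_\sigma$ is an invariant curve-generating center. This shows $\Omega_{\inv,\sigma}$ consists of invariant curve-generating centers with respect to $\bPsi$.
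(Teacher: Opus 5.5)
Your proof is correct and follows the paper's implicit reasoning: the paper gives no separate proof and presents this proposition as a direct strengthening of \propref{prop:ScalingCentersAndTrafos}, where by \eqref{eqn:psisigma} the $\psi_\sigma$-curve is the image of the $\psi$-curve under the homothety about $\bX_2$ with triangle-independent ratio $1-\sigma$, so no shear is introduced. The step you flag as delicate is in fact automatic, since $H_{\Delta'}\circ S\circ H_{\Delta}^{-1}$ sends $\bX_{\psi_\sigma}^{\bPsi}(t)$ for $\Delta$ to $\bX_{\psi_\sigma}^{\bPsi}(t)$ for $\Delta'$ for each $t$.
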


An immediate consequence from the special case of the proof of \lemref{lem:semiInvariantCombo}, where the axes $\bV_x$ and $\bV_y$ remain the same up to scaling and rotation due to the properties of invariant curves is summarized as follows.

\begin{corollary}\label{cor:invariantSigmaTau}
   If $\bX_{\psi}$ is an invariant curve-generating triangle center for a triangle family $\bPsi$, then it remains invariant under $\bPhiS(\sigma(t)) \circ \bPsi(\tau(t))$ for arbitrary scalar-valued functions $\sigma(t)$ and $\tau(t)$.
\end{corollary}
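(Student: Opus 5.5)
The plan is to redo the proof of \lemref{lem:semiInvariantCombo} and track, at each step, what happens to the two axis vectors $\bV_x,\bV_y$. Invariance of $\bX_\psi$ with respect to $\bPsi$ means that, after translating so that the centroids coincide (which is allowed by \lemref{lem:propertiesFamilies}), the curve for any $\Delta\in\triangle$ can be written as
$$
\bX_{\psi}^{\bPsi}(t) = \bX_2 + l_x(t)\,\bV_x + l_y(t)\,\bV_y,
$$
with a fixed reference curve $(l_x(t),l_y(t))$ that does not depend on $\Delta$. Here $(\bV_x,\bV_y)$ is the image of the standard basis under a similarity; that is, $\bV_x,\bV_y$ are orthogonal, of equal length, and depend only on $\Delta$. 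I will fix these vectors once per triangle $\Delta$ and then show that they are not altered by the two operations of reparametrization and scaling.

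\emph{Reparametrization.} Take $t\mapsto\tau(t)$. The family $\bPsi(\tau(t))$ consists, for each $t$, of the triangle $\Delta_{\tau(t)}^{\bPsi}$. Its center is therefore $\bX_2 + l_x(\tau(t))\bV_x + l_y(\tau(t))\bV_y$, with the same $\bV_x,\bV_y$ as before. The new reference curve is $(l_x\circ\tau,\,l_y\circ\tau)$. It is still independent of $\Delta$, and it is obtained from the curve of any $\Delta$ by the same similarity as before. Hence the curves for different $\Delta$ still differ only by similarities.

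\emph{Scaling.} Next, concatenate with $\bPhiS(\sigma(t))$. By commutativity (\lemref{lem:proptriangleConcatenation}), $\bPhiS(\sigma)\circ\bPsi(\tau)$ maps each triangle $\Delta^{\bPsi}_{\tau(t)}$ to its homothetic image with center $\bX_2$ and factor $\sigma(t)$; the centroid is preserved by \lemref{lem:propertiesFamilies}. Triangle centers are equivariant under similarities, in particular under homotheties (see the remark after \defref{def:TriangleCenter}). The center of the scaled triangle is therefore
$$
\bX_2+\sigma(t)\bigl(l_x(\tau(t))\bV_x+l_y(\tau(t))\bV_y\bigr).
$$
The reference curve $\sigma(t)\,(l_x(\tau(t)),l_y(\tau(t)))$ is independent of $\Delta$, and the map from it to the actual curve is again $v\mapsto \bX_2+(\bV_x\,\bV_y)v$, which is a similarity. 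Comparing two triangles $\Delta,\Delta'$, the curves differ by $(\bV_x'\,\bV_y')(\bV_x\,\bV_y)^{-1}$ composed with a translation. This is a composition of similarities, so it involves no shear.

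\emph{Main obstacle.} The delicate step is the homothety when $\sigma(t)<0$ (a point reflection) or $\sigma(t)=0$. For $\sigma(t)<0$ I will stress that a point reflection is a rotation by $\pi$, hence a similarity, so equivariance still applies. For $\sigma(t)=0$ the triangle degenerates; I will either exclude these parameter values, matching the convention of \lemref{lem:decompositionSigmaTau}, or use the formula as the continuous extension, which gives $\bX_2$. I must also justify that the homothety factor $\sigma$ acts on the center exactly as it acts on the vertices. This is just the equivariance statement together with the fact that $\bPhiS(\sigma)$, written in barycentric coordinates, is the homothety about $\bX_2$.
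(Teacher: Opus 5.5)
Your proposal is correct and follows the paper's argument. The paper obtains this corollary by rerunning the proof of \lemref{lem:semiInvariantCombo} and observing that, for invariant curves, the axes $\bV_x,\bV_y$ of different triangles differ only by rotation and scaling, which is exactly what you track through the reparametrization and the scaling by $\sigma(t)$; your explicit handling of $\sigma(t)<0$ (a point reflection, hence still a similarity) and $\sigma(t)=0$ (a degenerate triangle) fills in details the paper leaves implicit.
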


 The following \lemref{lem:invariant_concatenation} establishes that invariance is preserved under concatenation of triangle families.

\begin{lemma}\label{lem:invariant_concatenation}
Let $\bX_{\psi}$ be a triangle center, essentially different from $\bX_2$, that is an invariant curve-generating center with respect to two triangle families $\bPsi_0$ and $\bPsi_1$. Then $\bX_{\psi}$ is an invariant curve-generating center with respect to $\bPsi_1 \circ \bPsi_0$. 
\end{lemma}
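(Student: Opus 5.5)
The plan is to identify the plane with $\mathbb{C}$, put the common centroid $\bX_2$ (\lemref{lem:propertiesFamilies}) at the origin, and turn invariance into a factorization statement. For a triangle $\Delta$ and a family $\bPsi$, write $\Delta^{\bPsi}(t)$ for the triangle $\Delta_t^{\bPsi}$. Invariance of $\bX_\psi$ with respect to $\bPsi$ means this: there is a fixed reference curve $g\colon\RR\to\mathbb{C}$ such that for every non-equilateral $\Delta$,
\[
\bX_\psi\bigl(\Delta^{\bPsi}(t)\bigr)=K(\Delta)\,g(t)\qquad\text{or}\qquad \bX_\psi\bigl(\Delta^{\bPsi}(t)\bigr)=K(\Delta)\,\overline{g(t)},
\]
with $K(\Delta)\in\mathbb{C}\setminus\{0\}$ independent of $t$. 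No translation term appears because $\bX_2=0$. I will first carry out the argument in the orientation-preserving case and then indicate the changes for the reflected case.

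The first structural ingredient is to interpret concatenation as composition of the operations $\Delta\mapsto\Delta^{\bPsi}(t)$. The vertices of $\Delta^{\bPsi}(t)$ are fixed barycentric combinations of the vertices of $\Delta$, with cyclically permuted coefficients. From \teqref{eqn:concatenationFactor} one checks that $\Delta^{\bPsi_1\circ\bPsi_0}(t)$ is obtained by applying the $\bPsi_1(t)$-construction to $\Delta^{\bPsi_0}(t)$. This is a direct computation with circulant matrices, and I would only have to confirm that the index convention matches. By the commutativity in \lemref{lem:proptriangleConcatenation}, the two constructions can also be applied in the opposite order. Concretely, for fixed parameters $s,t$,
\[
\bigl(\Delta^{\bPsi_1}(s)\bigr)^{\bPsi_0}(t)=\bigl(\Delta^{\bPsi_0}(t)\bigr)^{\bPsi_1}(s).
\]

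The core computation proceeds as follows. Let $g_0,K_0$ be the data for $\bPsi_0$ and $g_1,K_1$ the data for $\bPsi_1$. Since $\bX_\psi$ is essentially different from $\bX_2$, the curve $g_1$ is not identically zero, so we may fix $s_0$ with $g_1(s_0)\neq 0$. For any triangle $\Delta'$ we then have $K_1(\Delta')=\bX_\psi\bigl(\Delta'^{\bPsi_1}(s_0)\bigr)/g_1(s_0)$. Apply this with $\Delta'=\Delta^{\bPsi_0}(t)$, use the commutation above, and then apply invariance under $\bPsi_0$ to the fixed triangle $\Delta''=\Delta^{\bPsi_1}(s_0)$. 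This gives
\[
K_1\bigl(\Delta^{\bPsi_0}(t)\bigr)=\frac{\bX_\psi\bigl((\Delta'')^{\bPsi_0}(t)\bigr)}{g_1(s_0)}=\frac{K_0(\Delta'')}{g_1(s_0)}\,g_0(t).
\]
Therefore
\[
\bX_\psi\bigl(\Delta^{\bPsi_1\circ\bPsi_0}(t)\bigr)=K_1\bigl(\Delta^{\bPsi_0}(t)\bigr)\,g_1(t)=\frac{K_0(\Delta'')}{g_1(s_0)}\,g_0(t)g_1(t).
\]
This expression is a $\Delta$-dependent complex constant times the fixed curve $G=g_0g_1$, which means the curves for different triangles are related by similarities.

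In the reflected cases, where $\bar g_0$ or $\bar g_1$ appears, the same chain of equalities yields either $K\,\overline{g_0}\,g_1$ or $K\,\overline{g_0g_1}$ with $K$ independent of $t$. Again this is a similarity image of a fixed curve. The step I expect to be the main obstacle is justifying the normal form itself. It requires showing that the similarities in the definition of invariance can be chosen with a consistent orientation type, so that a single $g$ works for all $\Delta$, and that $K(\Delta)$ is well defined. Well-definedness is where essential difference from $\bX_2$ enters, since it guarantees that the curve is not degenerate at the centroid. A secondary obstacle is excluding the triangles $\Delta''$ that are equilateral or degenerate. I would handle these by choosing $s_0$ generically among the parameters with $g_1(s_0)\neq0$.
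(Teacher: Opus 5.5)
Your argument is correct in the orientation-preserving case, which is the only case the paper's proof treats, but it follows a genuinely different route.

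\textbf{The paper's route.} The paper postulates the anchored normal form $\bX_\psi^{\bPsi_j}(t)-\bX_2=r_j(t)\,\bR_{\theta_j(t)}\bigl(\bX_\psi-\bX_2\bigr)$, with $r_j,\theta_j$ independent of the triangle. In your notation this is the special choice $K_j(\Delta)=\bX_\psi(\Delta)-\bX_2$ and $g_j=r_j\,e^{\mathrm{i}\theta_j}$. With that in hand the proof is a direct substitution. One applies the $\bPsi_1$-formula to $\Delta_t^{\bPsi_0}$, whose center is $\bX_\psi^{\bPsi_0}(t)$, and multiplies the factors to get $r_0r_1$ and $\theta_0+\theta_1$. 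Essential difference serves only to make $\bX_\psi-\bX_2$ a nonzero reference vector, and commutativity is never used.

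\textbf{Your route.} You start from what the definition literally gives: a factor $K(\Delta)$ whose dependence on the triangle is unknown. The crux is then evaluating $K_1$ at the moving triangle $\Delta^{\bPsi_0}(t)$. Your device settles this. You commute the two circulant operations at different parameter values, which is harmless because circulant matrices commute, so \lemref{lem:proptriangleConcatenation} extends. You then read off $K_1$ at an auxiliary $s_0$, where essential difference guarantees $g_1(s_0)\neq0$. The result is the same reference curve $g_0g_1$.

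\textbf{What this buys.} You never need the anchoring $K(\Delta)\propto\bX_\psi(\Delta)-\bX_2$. That anchoring follows from your form for interpolating families: evaluate at the $t_0$ with $\Delta^{\bPsi}(t_0)=\Delta$. For general families it is not immediate from the definition, yet the paper's substitution step relies on it tacitly.

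\textbf{The reflected case does not yet work.} The four possible outcomes $g_0g_1$, $\overline{g_0}\,g_1$, $g_0\,\overline{g_1}$, $\overline{g_0g_1}$ are in general not similar to one another. You therefore need two facts:
\begin{enumerate}
\item the orientation type of $\Delta^{\bPsi_0}(t)$ with respect to $\bPsi_1$ does not jump with $t$;
\item the types for the two families combine the same way for every $\Delta$.
\end{enumerate}
Your proposed remedy, fixing one orientation type for all $\Delta$, is impossible for curves not contained in a line through $\bX_2$. Reflection equivariance of centers gives the mirror image of $\Delta$ the curve $\overline{K(\Delta)}\,\overline{g}$, which has the opposite type.

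A natural ingredient for a fix is that $\bPsi$-families preserve orientation. The row-normalized circulant matrix has determinant $\frac{(\Psi_1-\Psi_2)^2+(\Psi_2-\Psi_3)^2+(\Psi_3-\Psi_1)^2}{2(\Psi_1+\Psi_2+\Psi_3)^2}\geq0$. Hence each family stays within one orientation class, and mirror images can be handled by conjugation. You would still need the type to be constant within each class, which this does not by itself give. The paper sidesteps the whole issue by working with rotations only.
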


\begin{proof}
First note that since $\Delta$ is assumed not to be equilateral and $\bX_2$ is essentially different from $\bX_{\psi}$, we can place a local coordinate system composed of the axis, spanned by $\bX_2$ and $\bX_{\psi}$, and its rotation by $\nicefrac{\pi}{2}$.

Since the curves are invariant, there exist functions $r_{0}(t)$, $r_{1}(t)$, $\theta_{0}(t)$, and $\theta_{1}(t)$, such that the curves can be described in a local coordinate system centered at $\bX_2$ as
    $$
    \bX_{\psi}^{\bPsi_i}(t) = r_{i}(t)\, \bR_{\theta_{i}(t)}\cdot \left(\bX_{\psi} - \bX_2\right) +\bX_2 
    $$
    where $\bR_{\theta_i}$ denotes the matrix that rotates a vector by angle $\theta_i$. 
    
    The concatenation $\bPsi_1 \circ \bPsi_0$ is obtained by applying the $\bPsi_1$-triangle family invariant curve construction to the triangle $\triangle_t^{\bPsi_0}$.  Note that the triangle center corresponding to $\psi$ in $\triangle_t^{\bPsi_0}$ is $\bX_{\psi}^{\bPsi_0}(t)$.  It follows: 
\begin{align*}
    \bX_{\phi}^{\bPsi_1 \circ \bPsi_0}(t) &= r_{1}(t)\, \bR_{\theta_{1}(t)}\cdot \left(\bX_{\psi}^{\bPsi_0}(t) - \bX_2\right) +\bX_2\\
     &=  r_{1}(t)\, \bR_{\theta_{1}(t)}\cdot \left( r_{0}(t)\, \bR_{\theta_{0}(t)}\cdot \left(\bX_{\psi} - \bX_2\right)\right) +\bX_2 \\
     &= \left(r_{1}(t)\, r_{0}(t)\right)\,\left( \bR_{\theta_{1}(t)}\cdot \bR_{\theta_{0}(t)}\right)\cdot \left(\bX_{\psi} - \bX_2\right) +\bX_2 \\
     &= \left(r_{0}(t)\, r_{1}(t)\right)\, \bR_{\theta_{0}(t)+\theta_{1}(t)}\cdot  \left(\bX_{\psi} - \bX_2\right) +\bX_2.
\end{align*}
Consequently, the curve corresponding $\bX_{\psi}$ with respect to the concatenation of $\bPsi_0$ and $\bPsi_1$ has a construction that is also independent of the underlying triangle shape, which concludes our argument.

Note that in the special case where $\bPsi_1 = \bPhiS(\sigma)$, as discussed in \lemref{lem:semiInvariantCombo}, the angular function $\theta_1(t) = 0$ and $r_1(t) = \sigma(t)$.
\end{proof}

Upon finding an invariant curve-generating center $\bX_{\psi}$ with respect to a non-trivial triangle family $\bPsi$ (neither scaling and nor identity mapping), in addition to \corref{cor:invariantSigmaTau}, we can find invariant triangle families by $n$-fold concatenation $(\bPsi)^n$ of $\bPsi$ for any $n\in \NN$ by induction.

Note that for triangle centers that are essentially different from $\bX_2$, \lemref{lem:invariant_concatenation} applies to triangle families that can be expressed as a concatenation of triangle families but are not themselves decomposable; therefore, \corref{cor:invariantSigmaTau} is not applicable. An example of such families is given by the aliquot family $\bPhiA$ together with the triangle family
$$
\bPsi_0 \colon t \mapsto \begin{pmatrix}
    1+t\\ 1\\ 1-t+3t^2
\end{pmatrix}.
$$
The concatenation
$$
\bPsi_1 = \bPsi_0 \circ \bPhiA \colon t \mapsto \begin{pmatrix}
    1+t^2\\ 1-t+4t^2-3t^3\\ 1+t-2t^2+3t^3
\end{pmatrix}
$$
is a triangle family that is not decomposable, since $\delta_{\bPsi_2}(t) = 0$ for $t\in\RR$.

\subsection{Semi-invariant curve-generating centers that relate to trisectrices}\label{sec:mainresult}

With these preparations in place, we now highlight four families of triangle centers that are semi-invariant curve-generating centers with respect to decomposable $\bPsi$-families. To this end, we define 
\begin{align*}
\psi_{\Gamma;\lambda_0:\lambda_1}(a,b,c)&\coloneqq\sqrt{3}a(-a^2+b^2+c^2)\lambda_0  +4 a \area \lambda_1 \\
\psi_{\Xi;\lambda_0:\lambda_1}(a,b,c)&\coloneqq-a (-a^2+b^2+c^2) \lambda_0 + a(a^2+b^2+c^2)\lambda_1,
\end{align*}
and name
\begin{align*}
\Gamma &\coloneqq \left\{\psi_{\Gamma;\lambda_0:\lambda_1}(a,b,c) \mid \lambda_0, \lambda_1 \in \RR \right\}, &
\Gamma^{-1} &\coloneqq \left\{\psi_{\Gamma;\lambda_0:\lambda_1}^{-1}(a,b,c) \mid \lambda_0, \lambda_1 \in \RR \right\},\\
\Xi &\coloneqq \left\{ \psi_{\Xi;\lambda_0:\lambda_1}(a,b,c)\mid \lambda_0, \lambda_1 \in \RR\right\},
& \Xi^{-1} &\coloneqq \left\{ \psi_{\Xi;\lambda_0:\lambda_1}^{-1}(a,b,c)\mid \lambda_0, \lambda_1 \in \RR\right\}.
\end{align*}
The two triangle center families $\Gamma$ and $\Xi$ are special constant-affine parametrizations\footnote{It follows from \exref{ex:brocard} that $\Gamma = \{\bX_3, \bX_6\} \cup \spann_{\const}(\bX_3, \bX_6, \bX_{15})$ and $\Xi = \{\bX_3, \bX_6\} \cup \spann_{\const}(\bX_3, \bX_6, \bX_{32})$. In particular, we have that $\{\bX_{15}, \bX_{16}, \bX_{61}, \bX_{62}\} \subset \Gamma$ and $\{\bX_{32}, \bX_{39}\}\subset \Xi$.} of the \emph{Brocard axis}, the set of triangles collinear with the triangle centers $\bX_3$ and $\bX_6$, see \figref{fig:brocard}.
The sets of their isogonal conjugates $\Gamma^{-1}$ and $\Xi^{-1}$ are one-parameter families of triangle centers on the \emph{Kiepert hyperbola}. 

\begin{figure}

\begin{subfigure}{0.47\textwidth}
\centering
\begin{footnotesize}
\begin{overpic}[width=0.8\textwidth]{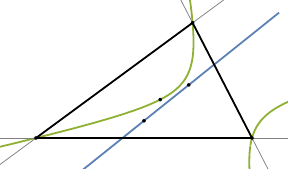}
\put(50,12){$\bX_3$}
\put(65,24){$\bX_6$}
\put(50,26){$\bX_2$}
\put(80,35){Brocard}
\put(84.5,30){axis}
\put(22,53){Kiepert hyperbola}
\end{overpic}%
\vspace{.6cm}
\end{footnotesize}%
    \caption{Brocard axis and Kiepert hyperbola.}\label{fig:brocard}
\end{subfigure}%
\begin{subfigure}{0.49\textwidth}
\centering
\begin{footnotesize}
\hspace{0.4cm}
\begin{overpic}[width=0.8\textwidth]{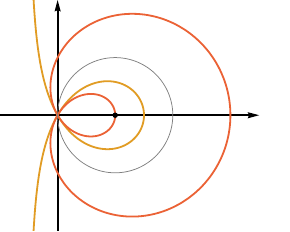}
\put(40,42){$(k,0)$}
\put(43,25){$M$}
\put(77,20){$L$}
\end{overpic}%
\end{footnotesize}%
 \caption{Maclaurin trisectrix $M$ and Lima\c{c}on trisectrix $L$.}\label{fig:maclaurin}
\end{subfigure}

\caption{Illustration of concepts discussed in \thmref{thm:main}. }
\end{figure}

Building on the work of Mundilova~\cite{Mundilova:2024:MTT}, we identify these triangle centers as semi-invariant curve-generating centers through their connection with special algebraic curves. In particular, the \emph{Maclaurin trisectrix} is the plane algebraic curve defined by
$$
2x(x^2+y^2) = k(3x^2-y^2). %\quad \text{and} \quad k^2(x^2+y^2) = (x^2+y^2-2kx)^2.
$$
Applying an inversion with respect to the circle centered at 
$(k,0)$, the \emph{center point}, with radius $k$ transforms this curve into the \emph{Lima\c{c}on trisectrix}, defined by
\begin{equation}\label{eqn:limacon}
k^2(x^2+y^2) = (x^2+y^2-2kx)^2,
\end{equation}
see \figref{fig:maclaurin}.

With these preparations in place, our main result can be stated as follows: 
\begin{theorem}\label{thm:main}
Let $\bPsi$ be a decomposable triangle family. Then, the triangle families 
$$
\Omega \coloneqq \Gamma \cup \Gamma^{-1} \cup \Xi \cup \Xi^{-1}
$$
are semi-invariant triangle curve-generating centers. In particular, we have that:
 \begin{itemize}
     \item If $\bPsi = \bPhiA$, the triangle curves are sheared Maclaurin trisectrices.
     \item If $\bPsi = \bPhiN$, the triangle curves are sheared Lima\c{c}on trisectrices.
 \end{itemize}
 Furthermore, among these triangle center functions, only the triangle centers $$
 \Omega_{\operatorname{inv}} \coloneqq \left\{\bX_{13}, \bX_{14}, \bX_{15}, \bX_{16}\right\},$$ 
 are invariant.
\end{theorem}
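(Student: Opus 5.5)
By \corref{cor:nedianbecausealiquot} together with \lemref{lem:decompositionSigmaTau}, it suffices to prove semi-invariance with respect to the aliquot family $\bPhiA$. The statement for a general decomposable $\bPsi$, and in particular for $\bPhiN$, then follows by reparametrizing with $\tau(t)$ and scaling by $\sigma(t)$ about $\bX_2$. I would also reduce the number of centers to check. By the footnote, $\Gamma$ and $\Xi$ are constant-affine families on the Brocard axis, and \propref{prop:ScalingCentersAndTrafos} produces the one-parameter extension $\Omega_\sigma$ by combining a center with $\bX_2$. The first step is therefore to rewrite each family as a scaled family in the sense of \defref{def:scaledfamilies}, of the form $\{\psi_{\sigma}\}$ built from a single base center:
\begin{itemize}
\item[] $\Gamma$ from $\bX_{15}$ and $\bX_{16}$;
\item[] $\Xi$ from the point of the Brocard axis where the normalized combination is nice, e.g.\ via $\bX_{32}$;
\item[] $\Gamma^{-1},\Xi^{-1}$ similarly on the Kiepert hyperbola from $\bX_{13},\bX_{14}$.
\end{itemize}
I expect the latter to need a direct computation, since isogonal conjugation does not commute with the affine combination with $\bX_2$.

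The main computation proceeds as follows. Place $\bX_2$ at the origin and write $\bA,\bB,\bC$ in complex form. The aliquot triangle has vertices $(1-t)\bB+t\bC$, etc. Its side lengths squared are quadratic in $t$, $a_t^2=|\bC-\bA + t(\ldots)|^2$, and its area is $(1-3t+3t^2)\area$. Now express the center in normalized barycentrics relative to $\Delta_t$ and convert to barycentrics relative to $\Delta$. For Brocard-axis centers the normalized coordinates are rational in $a_t^2,b_t^2,c_t^2,\area_t$, and the resulting point is $\bX_2+l_x(t)\bV_x+l_y(t)\bV_y$. Here I take $\bV_x=\bX_3-\bX_2$-type and $\bV_y=\bX_6-\bX_2$-type axis vectors (or better, the two independent vectors $\sum a^2\bA$ and $\sum \area$-weighted ones), and the goal is to show that $l_x,l_y$ depend only on $t$. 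The key identity to exploit is that the symmetric quantities $a_t^2+b_t^2+c_t^2=(1-3t+3t^2)(a^2+b^2+c^2)$ and $\area_t$ factor through the same cyclic polynomial $1-3t+3t^2$. The non-symmetric parts then transform linearly in the vertices, producing a rotation/shear of an $(x,y)$ curve of the shape $2x(x^2+y^2)=k(3x^2-y^2)$. Matching this with Mundilova's parametrization for $\bX_{13},\bX_{15}$ gives the Maclaurin trisectrix. Plugging in \corref{cor:nediantoaliquot}'s $\sigma,\tau$ and applying the inversion at $(k,0)$, I would then check that the nedian curve satisfies \eqref{eqn:limacon}.

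For the invariance classification, the sheared curve is a similar copy for every triangle exactly when the pair $(\bV_x,\bV_y)$ is orthogonal with equal length up to rotation for all $\Delta$. For the isodynamic/isogonic centers the relevant vectors are $\bX_{15}-\bX_2$ and its $\pi/2$ rotation, which is why the argument in the proof of \lemref{lem:invariant_concatenation} applies. For general $\lambda_0:\lambda_1$, I would show that the Gram matrix of $(\bV_x,\bV_y)$ is a scalar times a rotation only when the parameter ratio makes $\lambda_0^2=\lambda_1^2$ in the $\Gamma$ normalization; this picks out $\sin(\alpha\mp\pi/3)$. It fails identically for $\Xi$, whose Gram matrix involves $a^2+b^2+c^2$ versus $\area$ with non-constant ratio. \propref{prop:ScalingCentersAndTrafosInv} handles the $\sigma$-scaled versions, and I would expect the nonzero $\sigma$ to break invariance through the Gram test.

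The main obstacle is the bulk algebra of the second step, especially for the conjugate families $\Gamma^{-1},\Xi^{-1}$, where the side lengths of $\Delta_t$ enter through inverses. I would first verify the claim symbolically for generic $\lambda_0:\lambda_1$ with a computer algebra system, reducing modulo Heron's relation, and then extract the clean factorization.
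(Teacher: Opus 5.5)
\textbf{Gap in the reduction step.} Your overall skeleton matches the paper's: reduce to $\bPhiA$ via \corref{cor:nedianbecausealiquot}, verify a sheared-trisectrix identity symbolically, decide invariance by an orthogonality/length test on two axis vectors, and get the nedian case from the $\sigma,\tau$ of \corref{cor:nediantoaliquot}. But the reduction you call the first step is false.

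A scaled family in the sense of \defref{def:scaledfamilies} consists of the centers $(1-\sigma)\bX_\psi+\sigma\bX_2$ with constant $\sigma$, i.e.\ of points on the line $\bX_2\bX_\psi$. For non-isosceles triangles $\bX_2$ is not on the Brocard axis. So the scaled families of $\bX_{15}$, $\bX_{16}$, $\bX_{32}$ meet the Brocard axis only in these base points, and those of $\bX_{13}$, $\bX_{14}$ are lines through $\bX_2$, not arcs of the Kiepert hyperbola. Hence $\Gamma,\Xi,\Gamma^{-1},\Xi^{-1}$ are not scaled families, and checking only the base centers says nothing about, e.g., $\bX_3,\bX_6\in\Gamma$.

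The statement itself rules your reduction out. By \propref{prop:ScalingCentersAndTrafosInv}, scaled families of invariant centers are invariant, so every center of $\Gamma$, including $\bX_3$, would be invariant. Your way of reconciling this, expecting $\sigma\neq 0$ to break invariance in the Gram test, is also wrong. By \eqref{eqn:psisigma}, the curve of $\psi_\sigma$ is the curve of $\psi$ scaled about $\bX_2$ by the constant $1-\sigma$, which preserves orthogonality of the axes and their length ratio. Drop the reduction and check the symbolic identity for generic $\lambda_0:\lambda_1$ in each of the four families; this is what the paper's Mathematica step does.

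\textbf{Three further corrections.}

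\emph{The frame.} With $\bV_x\propto\bX_3-\bX_2$ and $\bV_y\propto\bX_6-\bX_2$ (or $\sum a^2\bA$), the coefficients $l_x,l_y$ cannot be triangle-independent. For nearly equilateral triangles these two vectors are almost antiparallel, while the traced curve is not nearly degenerate. The axes must depend on the center. Use, as the paper does, $\bV_{\psi,x}=\bX_2-\bX_\psi$ and $\bV_{\psi,y}=\bX^{\bPhiA}_\psi(\nicefrac{1}{3})-\bX_2$. With $l_y=\sqrt{3}\,M_y$, invariance then means $\bV_{\psi,x}\cdot\bV_{\psi,y}=0$ and $|\bV_{\psi,x}|^2=3|\bV_{\psi,y}|^2$. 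Your conclusions for $\Gamma$ ($\lambda_0=\pm\lambda_1$) and for $\Xi$ (none) are correct. You still have to run the test on $\Gamma^{-1}$ and $\Xi^{-1}$, which yields $\bX_{13},\bX_{14}$ and the trivial centroid.

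\emph{The nedian step.} The nedian curve is the radial rescaling $\sigma(t)(\bM(\tau(t))-(k,0))+(k,0)$, not an inversion about $(k,0)$; pointwise these differ (cf.\ the remark after the theorem). Just verify \eqref{eqn:limacon} for it directly.

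\emph{The heuristic.} ``The non-symmetric parts transform linearly in the vertices'' is not an argument; the individual $a_t^2$ do not. A conceptual replacement for the computer algebra:
\begin{itemize}
\item Put $\bX_2=0$ and write $\bA=p+q$, $\bB=\varepsilon p+\bar\varepsilon q$, $\bC=\bar\varepsilon p+\varepsilon q$ in complex form, with $\varepsilon=e^{2\pi i/3}$.
\item The aliquot family acts by $(p,q)\mapsto(\mu p,\bar\mu q)$ with $\mu=\varepsilon(1-t)+\bar\varepsilon t$.
\item One has $\bX_3=(p^2\bar q-\bar p q^2)/(|p|^2-|q|^2)$ and $\bX_6=-(p^2\bar q+\bar p q^2)/(|p|^2+|q|^2)$.
\item Every center in $\Omega$ has the form $Fp^2\bar q+G\bar p q^2$ with $F,G$ depending only on $|p|,|q|$.
\item Its aliquot curve is therefore $Fw\,p^2\bar q+G\bar w\,\bar p q^2$ with $w=\mu^3/|\mu|^2$, and $\bM(t)=k-k\overline{w(t)}$ as complex numbers.
\item Within $\Omega$, invariance amounts to $F\equiv 0$ or $G\equiv 0$.
\end{itemize}
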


\begin{proof}
The proof of this theorem builds on the work of Mundilova~\cite{Mundilova:2024:MTT}, who shows that the triangle centers $\bX_{13}$ and $\bX_{15}$ generate invariant curves by establishing a parametric relationship between the Maclaurin trisectrix and the triangle curve traced with respect to the aliquot triangle family, using algebraic simplifications carried out in Mathematica. 

As the proof is applicable for all all triangle centers in $\Omega$, in the following, we will use $\psi$ to represent either of the triangle center functions in $\psi_{\Gamma;\lambda_0:\lambda_1}$, $\psi_{\Xi;\lambda_0:\lambda_1}$, $\psi_{\Gamma;\lambda_0:\lambda_1}^{-1}$, or $\psi_{\Xi;\lambda_0:\lambda_1}^{-1}$.

We structure our proof in five steps:
\begin{itemize}
    \item \emph{Step 1 (Preparations):}
We assume without loss of generality that a given triangle $(a,b,c)$ is placed such that its side $c$ coincides with the $x$-axis, namely,
\begin{equation}\label{eqn:triangle}
    \bA = (0,0), \quad \bB = (c,0), \quad \text{and} \quad 
    \bC = \left(\frac{1}{2c}\left(-a^2+b^2+c^2\right), \frac{2}{c}\area\right).
\end{equation}    
We determine the parametrization $\bX_{\psi}^{\bPhiA}(t)$ of a triangle curve corresponding to a triangle center function $\psi$ and the aliquot triangle family. 

\item \emph{Step 2 (Test curve):} Since the algebraic expressions for the triangle curves $\bX_{\psi}^{\bPhiA}(t)$ do not easily simplify, we follow the approach of Mundilova~\cite{Mundilova:2024:MTT}, and use the parametrization of the Maclaurin trisectrix
\begin{equation}\label{eqn:maclaurint}
\bM(t) = (M_x(t), M_y(t)) = \frac{3k}{2 (1 - 3(1-t)t)} \left((1-t)t, \sqrt{3}(1-t)t(1-2t)\right)
\end{equation}
to algebraically compare them with.

\begin{figure}
\centering
\begin{footnotesize}
\begin{overpic}[width=0.6\textwidth]{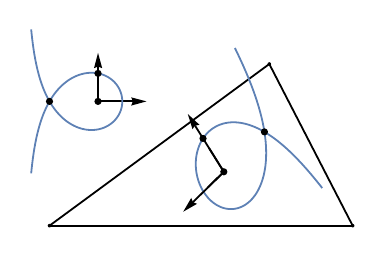}
\put(70,37){$\bX_{\psi}$}
\put(58,22){$\bX_2$}
\put(74.5,20){$\bT_{\psi}(t)$}
\put(54,33){$\bX_{\psi}^{\bPhiA}\!\left(\!\tfrac{1}{3}\!\right)$}
\put(21,40){$(k,0)$}%
\put(4,42.75){$\bM(0)$}
\put(27,52){$\bM(\nicefrac{1}{3})$}
\end{overpic}%
\end{footnotesize}%
\vspace{-0.6cm}
    \caption{Illustration of main idea of the proof of \thmref{thm:main}.}\label{fig:ill_main}
\end{figure}

Specifically, we define sheared trisectrices $\bT_{\psi}(t)$ based on local properties of triangle curves discussed in \propref{prop:propAliquotNedian}, obtained by mapping the points $\bM(0) = (0,0)$, the center $(k,0)$, and $\bM(\nicefrac{1}{3}) = (k,\nicefrac{k}{\sqrt{3}})$ to $\bX_{\psi} = \bX_{\psi}(0)$, the centroid $\bX_2$, and $\bX_{\psi}(\nicefrac{1}{3})$, respectively (see \figref{fig:ill_main}). 
To this end, we represent 
\begin{equation}\label{eqn:TpsibPhi}
\bT_{\psi}(t) = \bX_{\psi} + l_{x}(t) \bV_{\psi,x} + l_{y}(t) \bV_{\psi,y},
\end{equation}
where $\bV_{\psi,x}$ and $\bV_{\psi,y}$ are two vectors corresponding to the $x$ and $y$-axes of the unsheared trisectrix, 
namely, 
$$
\bV_{\psi,x} = \bX_2 - \bX_{\psi}
\quad \text{and} \quad
\bV_{\psi,y} = \bX_{\psi}^{\bPhiA}\left(\frac{1}{3}\right) - \bX_2,
$$
 and $l_x(t)$ and $l_y(t)$ two scalar-valued functions that encode the shape and scale of the sheared trisectrix, 
\begin{equation}\label{eqn:PhiAins}
l_x(t) = M_x(t),
\quad 
\text{ and }
\quad 
l_y(t) = \frac{M_y(t)}{M_y(\nicefrac{1}{3})} = \sqrt{3} \,M_y(t).
\end{equation}

\item \emph{Step 3 (Algebraic verification):} 
We use the advanced computer algebra system \emph{Mathematica} to show that the four equations 
$$
\bX_{\psi_{\Gamma}}^{\bPhiA}(t)== \bT_{\psi_{\Gamma}}^{\bPhiA}(t), 
 \quad 
\bX_{\psi_{\Xi}}^{\bPhiA}(t)== \bT_{\psi_{\Xi}}^{\bPhiA}(t), 
\quad
\bX_{\nicefrac{1}{\psi_{\Gamma}}}^{\bPhiA}(t)== \bT_{\psi_{\Gamma}^{-1}}^{\bPhiA}(t), 
 \quad \text{and} \quad
\bX_{\nicefrac{1}{\psi_{\Xi}}}^{\bPhiA}(t)== \bT_{\psi_{\Xi}^{-1}}^{\bPhiA}(t)
$$ 
simplify to true. 
Specifically, we use the \emph{FullSimplify} command with the assumptions 
$$
\{a > 0, b > 0, c > 0, a + b > c, b + c >a, c + a>0, \text{Element}[t, \text{Reals}]\}.
$$
The attached notebook confirms that the triangle centers in $\Gamma$, $\Xi$, $\Gamma^{-1}$, and $\Xi^{-1}$ are semi-invariant.  

\item \emph{Step 4 (Invariant curve-generating centers): } Among all triangle center functions in $\Omega$, we identify those that correspond to unsheared and properly scaled Maclaurin trisectrices. This property can be obtained by searching for combinations of scalars $\lambda_0$, $\lambda_1$ that are independent of the triangle side lengths, such that $\bV_{\psi,x} \cdot \bV_{\psi,y} = 0$ and $|\bV_{\psi,x}|^2 = 3 |\bV_{\psi,y}|^2$. The attached Mathematica notebook presents the computations, which can be summarized as follows:
\begin{itemize}
\item \emph{Triangle centers in $\Gamma$}: Both conditions are satisfied for $\lambda_0 : \lambda_1 \in \{1:1, -1:1\}$. These two solutions correspond to the triangle centers $\bX_{15}$ and $\bX_{16}$, respectively.

\item \emph{Triangle centers in $\Gamma^{-1}$}: Both conditions are satisfied for the combinations $\lambda_0 : \lambda_1 \in \{0:1, 1:1, -1:1\}$. These correspond to the triangle centers $\bX_6 = \bX_{2}^{-1}$, $\bX_{13} = \bX_{15}^{-1}$, and $\bX_{14} = \bX_{16}^{-1}$, respectively.

\item \emph{Triangle centers in $\Xi$}: In this case, we cannot find solutions to the two equations that are independent of the triangle's shape. Consequently, this constant-affine parametrization of the Brocard axis admits no invariant triangle centers.

\item \emph{Triangle centers in $\Xi^{-1}$}: The only invariant triangle center corresponds to $\lambda_0: \lambda_1 = 0:1$, and is therefore the centroid $\bX_2 = \bX_6^{-1}$. The other two solutions depend on the shape of the triangle.
\end{itemize}
    
\item \emph{Step 5 (Generalization of results):} Using \corref{cor:nedianbecausealiquot}, we conclude that the triangle centers in $\Omega$ and $\Omega_{\inv}$ are semi-invariant or invariant, respectively, with respect to all decomposable triangle families. 

    To show the stated connection between the nedian family and Liman\c{c}on trisectrix, we use the functions $\tau(t)$ and $\sigma(t)$ in \corref{cor:nediantoaliquot} to find   
\begin{align}\label{eqn:limacont}
\bL(t) &= \sigma(t) \left(\bM(\tau(t)) - (k,0)\right) + (k,0) \nonumber\\
&= \frac{3k}{2(1-(1-t)t)^2} \left(t(1+t -2(2-t)t^2), 
\sqrt{3}(1-t)t(1-2t)\right).
\end{align}
Using \teqref{eqn:limacon}, it is straightforward to verify that $\bL(t)$ parametrizes a Lima\c{c}on trisectrix. 
\end{itemize}%
\end{proof}

\begin{remark}
    The found parametrizations of the Maclaurin (\teqref{eqn:maclaurint}) and Lima\c{c}on trisectrix (\teqref{eqn:limacont}) are not related by a circular inversion. 
\end{remark}

Finally, with \propref{prop:ScalingCentersAndTrafos} and \propref{prop:ScalingCentersAndTrafosInv}, we extend our results to four two-parameter families of semi-invariant triangle centers and four one-parameter families of invariant triangle centers  as follows:

\begin{corollary}
The families of scaled triangle centers $\Omega_{\sigma}$ (see~\defref{def:scaledfamilies}) are semi-invariant curve-generating centers. Furthermore, the scaled triangle families $\Omega_{\inv,\sigma}$ are invariant curve-generating centers.
\end{corollary}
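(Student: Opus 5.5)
The statement to prove is the final corollary. It asserts that the scaled families $\Omega_\sigma$ are semi-invariant curve-generating centers and that $\Omega_{\inv,\sigma}$ are invariant ones. The plan is a direct reduction to \thmref{thm:main} combined with \propref{prop:ScalingCentersAndTrafos} and \propref{prop:ScalingCentersAndTrafosInv}.

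Fix a decomposable family $\bPsi$ and an element $\psi_\sigma\in\Omega_\sigma$. By \defref{def:scaledfamilies}, $\psi_\sigma$ is the constant-affine combination $(1-\sigma)\psi/\nmlz{\psi}+\sigma\phi_2/\nmlz{\phi_2}$ for some $\psi\in\Omega$.

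We handle the values of $\sigma$ in three cases.
\begin{enumerate}
\item For $\sigma=0$ we have $\psi_0=\psi/\nmlz{\psi}\cong\psi$, which is covered by \thmref{thm:main}.
\item For $\sigma=1$ the center is $\bX_2$, whose curve degenerates to the constant point $\bX_2$ by \lemref{lem:propertiesFamilies}. This is trivially semi-invariant and invariant.
\item For $\sigma\notin\{0,1\}$ we use \thmref{thm:main}, which states that $\bX_\psi$ is semi-invariant with respect to $\bPsi$. \propref{prop:ScalingCentersAndTrafos} then gives $\bX^{\bPsi}_{\psi_\sigma}(t)=(1-\sigma)\bX^{\bPsi}_\psi(t)+\sigma\bX_2$. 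This is the image of a semi-invariant curve under a homothety centered at $\bX_2$ with fixed ratio $1-\sigma$. Composing the affine maps relating the curves of two triangles with these homotheties yields again an affine map, so $\bX_{\psi_\sigma}$ is semi-invariant.
\end{enumerate}

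For the invariant part we restrict to $\psi\in\Omega_{\inv}$. A homothety about $\bX_2$ conjugates similarities to similarities, so the curves for different triangles still differ by a similarity. This is exactly the content of \propref{prop:ScalingCentersAndTrafosInv}. Alternatively, we can use the second bullet of \propref{prop:ScalingCentersAndTrafos}, $\bX^{\bPsi}_{\psi_\sigma}=\bX^{\bPhiS(\sigma)\circ\bPsi}_\psi$, together with \corref{cor:invariantSigmaTau} applied with constant scaling.

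The main obstacle is the traceability hypothesis $\nmlz{\psi_\sigma}\neq0$ required in \propref{prop:ScalingCentersAndTrafos}. Because both summands are normalized, $\nmlz{\psi_\sigma}=(1-\sigma)+\sigma=1$, so the hypothesis holds automatically whenever $\psi$ is traceable. For $\psi\in\Omega$ that are not traceable for some triangles, the statement should be read on the set of triangles where $\psi$ is traceable, as in \thmref{thm:main}. I would make this convention explicit.
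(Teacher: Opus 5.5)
Your proposal is correct and follows the paper's route: the paper obtains the corollary directly from \thmref{thm:main} combined with \propref{prop:ScalingCentersAndTrafos} and \propref{prop:ScalingCentersAndTrafosInv}. Your extra bookkeeping fills in details the paper leaves implicit, namely treating $\sigma\in\{0,1\}$ separately (since \propref{prop:ScalingCentersAndTrafos} excludes these values) and noting that $\nmlz{\psi_\sigma}=1$ automatically, so that proposition's traceability hypothesis is redundant.
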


\subsection{Triangle families that generate target curves}\label{sec:inverseProblem}

Upon successfully identifying triangle centers that generate semi-invariant curves with respect to decomposable triangle families in \thmref{thm:main}, a natural question arises: Can this process be reversed? Specifically, given a triangle center, how can we determine a triangle family for which a particular curve arises?

The next theorem presents an explicit construction of a triangle family which, together with a triangle center $\psi\in\Omega_{\sigma}$, generates a given target curve:

\begin{theorem}\label{thm:inverse}
    Let a $\bT(t)$ be a target curve, defined for $t \in T = \left(-\frac{3\pi}{2},\frac{3\pi}{2}\right)$, and given in polar coordinates as
$$
\bT(t) = r(t) \left(\cos \theta(t), \sin \theta(t)\right),
$$
with two scalar valued functions $r(t)$ and $\theta(t)$, with $r(t) \neq 0$ for all $t\in T$.

Consider the triangle family $t \mapsto \bPsi(t) = \left(\bPsi_1(t),\bPsi_2(t),\bPsi_3(t)\right)$, where
\begin{align*}
    \bPsi_1(t) &= 1 + r(t) \left(\cos\frac{\theta(t)}{3} + \sqrt{3}\sin\frac{\theta(t)}{3}\right), \\
    \bPsi_2(t) &= 1 + r(t) \left(\cos\frac{\theta(t)}{3} - \sqrt{3}\sin\frac{\theta(t)}{3}\right), \\
    \bPsi_3(t) &= 1 - 2 r(t) \cos\frac{\theta(t)}{3}.
\end{align*}

For a triangle center $\psi\in\Omega_{\sigma}$, the associated triangle curves $\bX_{\psi}^{\bPsi}(t)$ are sheared versions of the target curve $\bT(t)$.
If $\psi\in\Omega_{\inv,\sigma}$, the resulting curves are unsheared.
\end{theorem}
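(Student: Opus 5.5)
The plan is to reduce the statement to \lemref{lem:decompositionSigmaTau} and \lemref{lem:semiInvariantCombo} (resp.\ \corref{cor:invariantSigmaTau}). The family $\bPsi$ should be written as $\bPhiS(\sigma(t))\circ\bPhiA(\tau(t))$. Then the triangle curve of any $\psi\in\Omega_\sigma$ is the known sheared Maclaurin trisectrix of \thmref{thm:main}, reparametrized by $\tau$ and radially scaled about $\bX_2$ by $\sigma$. It remains to choose $\sigma,\tau$ so that this composite is the target curve in polar form.

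\emph{Step 1 (normalization and decomposability).} First observe that $\Psi_1+\Psi_2+\Psi_3=3$, since the $r$-terms cancel. Next compute
\[
\delta_{\bPsi}(t)=2\Psi_1-\Psi_2-\Psi_3 = 3r(t)\left(\cos\tfrac{\theta}{3}+\sqrt3\sin\tfrac{\theta}{3}\right)=6r(t)\cos\!\left(\tfrac{\theta}{3}-\tfrac{\pi}{3}\right).
\]
Whenever this vanishes, one must check decomposability. I would argue that the generic case has $\delta_{\bPsi}\neq0$, and treat the isolated zeros by noting that on $T$ the angle $\theta/3$ ranges over an interval of length $\pi$. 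This is where the restriction to $T$ and the hypothesis $r\neq0$ enter. Away from such points, \eqref{eqn:sigmatau} gives
\[
\sigma(t)=-\tfrac{2}{\sqrt3}\,r(t)\sin\!\left(\tfrac{\theta}{3}+\tfrac{\pi}{6}\right)\cdot\sqrt3,
\qquad
\tau(t)=\frac{\Psi_1-\Psi_3}{\delta_{\bPsi}},
\]
with $\Psi_1-\Psi_3=r(3\cos\frac{\theta}{3}+\sqrt3\sin\frac{\theta}{3})$. Here $r$ cancels in $\tau$, so $\tau$ depends only on $\theta(t)$.

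\emph{Step 2 (the Maclaurin trisectrix in polar form about its center point).} The model curve $\bM(s)$ of \eqref{eqn:maclaurint}, written relative to the center $(k,0)$ (which corresponds to $\bX_2$), should have a polar description $\bM(s)-(k,0)=\rho(s)(\cos\varphi(s),\sin\varphi(s))$. The trisection property of the curve suggests that $\varphi(s)$ is related to a tripled angle. I would compute $\varphi(\tau)$ for the $\tau$ from Step 1 and verify $\varphi(\tau(t))=\theta(t)$ (up to the fixed rotation and reflection of the model axes). This identity is essentially the trisection identity, and the $\cos(\theta/3)$, $\sin(\theta/3)$ entries in $\bPsi$ are designed to produce it. I would then verify $\sigma(t)\rho(\tau(t))=c\,r(t)$ for a constant $c$. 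The product $\sigma\rho$ should reduce to a multiple of $r$ times a function of $\theta/3$ that cancels identically.

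\emph{Step 3 (conclusion).} By the proof of \lemref{lem:semiInvariantCombo}, for $\psi\in\Omega$ one has
\[
\bX_\psi^{\bPsi}(t)=\bX_2+\sigma(t)\,\bigl(l_x(\tau(t))\bV_x+l_y(\tau(t))\bV_y\bigr),
\]
which by Step 2 is the image of $c\,\bT(t)$ under the linear map sending the model axes to $\bV_x,\bV_y$, i.e.\ a sheared $\bT$. For $\psi\in\Omega_\sigma$, \propref{prop:ScalingCentersAndTrafos} only adds a further constant scaling about $\bX_2$. In the invariant case, \corref{cor:invariantSigmaTau} and \propref{prop:ScalingCentersAndTrafosInv} show that the map is a similarity, so the curve is unsheared.

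The main obstacle is Step 2. One must get the correct polar angle of the trisectrix about its center point, match branches so that $\varphi\circ\tau=\theta$ holds on all of $T$ (not just modulo $2\pi/3$), and confirm that $\sigma\rho$ is exactly proportional to $r$. I would first test these identities symbolically with $\theta=3u$, which makes all expressions rational in $\cos u,\sin u$.
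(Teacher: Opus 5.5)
Your plan is the paper's argument run in the opposite direction. The paper solves $\bT=\sigma(\bM\circ\tau-(k,0))+(k,0)$ for $\sigma,\tau$ and substitutes into \eqref{eqn:PhiSigmaTau}. You instead read $\sigma,\tau$ off the given $\bPsi$ via \eqref{eqn:sigmatau} and then check the same identity. Both routes then conclude through \lemref{lem:semiInvariantCombo}, \corref{cor:invariantSigmaTau} and \propref{prop:ScalingCentersAndTrafos}.

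Your $\sigma$ agrees with the paper's. Your $\tau$ is the one actually consistent with the stated $\bPsi$. The $\tau$ printed in the paper's proof is its negative: at $\theta=0$ it gives $\tau=-1$, whereas $\bPsi=(1+r,1+r,1-2r)$ forces $\tau=1$.

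Step 2 is easier than you fear and has no branch issue. Put $\theta=3u$ and $D=\cos u+\sqrt3\sin u$. Then $1-\tau=\frac{2\sqrt3\sin u}{3D}$ and $1-3\tau(1-\tau)=D^{-2}$, and one gets directly
\[
\sigma\bigl(\bM(\tau)-(k,0)\bigr)=k\,r\,(\cos 3u,\ \sin 3u)=k\,\bT(t).
\]
This is a polynomial identity in $\cos u,\sin u$. So $c=k$, and no polar angle $\varphi$ or extra rotation or reflection needs to be tracked.

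The step that fails as written is your treatment of the zeros of $\delta_{\bPsi}=6r\cos(\theta/3-\pi/3)$. They occur exactly when $\theta(t)\equiv-\pi/2\pmod{3\pi}$, and neither hypothesis removes them:
\begin{itemize}
\item $T$ constrains the parameter $t$, not the values of $\theta$. Even when $\theta(t)=t$, as in the rose-curve example, $\theta=-\pi/2$ lies in $T$; an interval of length $\pi$ for $\theta/3$ contains one zero, not none.
\item $r\neq 0$ does not help either. At such a parameter $\bPsi=(1,\,1+\sqrt3 r,\,1-\sqrt3 r)$ is not constant, so $\bPsi$ is \emph{not} decomposable there, with $\sigma=0$ and $\tau=\infty$. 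Hence \lemref{lem:decompositionSigmaTau} and \lemref{lem:semiInvariantCombo} do not apply.
\end{itemize}

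The paper's proof also passes over this point (it only checks that $\bPsi$ is a triangle family), but your proof should close it. One way is continuity in the auxiliary angle. For fixed $r\neq0$, the triangle $\Delta^{\bPsi}$ and its center depend continuously on $\theta$. The identity $\bX_\psi^{\bPsi}=\bX_2+A\bigl(r(\cos\theta,\sin\theta)\bigr)$, with $A$ the fixed linear map onto the sheared axes from your Step 3, holds for all nearby $\theta$, hence also at $\theta=-\pi/2$. Geometrically, $\sigma\to0$ while $\sigma\tau\to-r/\sqrt3$, and the trisectrix point escapes along its asymptote $x=-k/2$, giving the limit $(0,-kr)=k\,\bT$.
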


\begin{proof}
To compute the corresponding triangle family, we must first determine the relationship between the parametrization of the Maclaurin trisectrix given in \teqref{eqn:maclaurint} and the target curve, involving a reparametrization and a scaling transformation with respect to the center $(k,0)$. Specifically, we compute the two scalar-valued functions $\sigma(t)$ and $\tau(t)$, such that     
$$
\bT(t) = \sigma(t)(\bM(\tau(t))-(k,0))+(k,0).
$$
Since every line incident with $(k,0)$ intersects the Maclaurin trisectrix (in general) at three points, the equation admits three solutions. Among these, it is sufficient to consider a single solution, such as
$$
\sigma(t) = - r(t) \left(\cos \frac{\theta(t)}{3} + \sqrt{3} \sin \frac{\theta(t)}{3}\right) \quad \text{and}\quad 
\tau(t) = - \frac{1}{6} \left(3 + \frac{3 - 2 \sqrt{3}\sin \frac{2\theta(t)}{3}}{-1 + 2 \cos \frac{2\theta(t)}{3}}\right),
$$
where $t\in T$.
Inserting these expressions into \teqref{eqn:PhiSigmaTau} yields the formulas stated above. 

Finally, we note that $\Psi_1(t) + \Psi_2(t) +\Psi_3(t) = 3 \neq 0$ and $\Psi_1(t) = \Psi_2(t) = \Psi_3(t)$ only if $r(t) = 0$. Consequently, $\bPsi(t)$ is a triangle family.
\end{proof}

\begin{figure}
\centering
\includegraphics[width=0.32\textwidth, trim = 5.3cm 2.cm 4.9cm 5cm,clip]{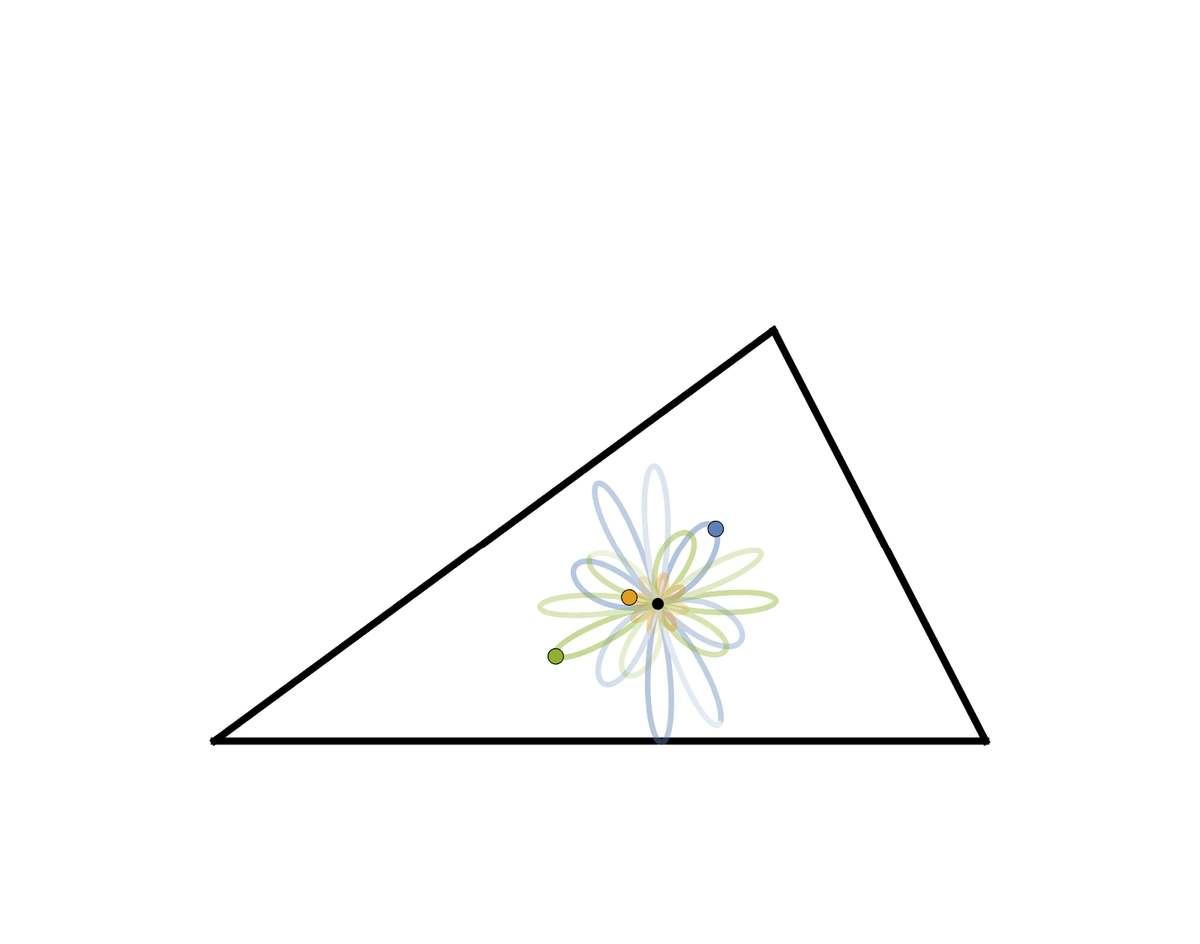}
\hfill
\includegraphics[width=0.32\textwidth, trim = 5.3cm 2.cm 4.9cm 5cm,clip]{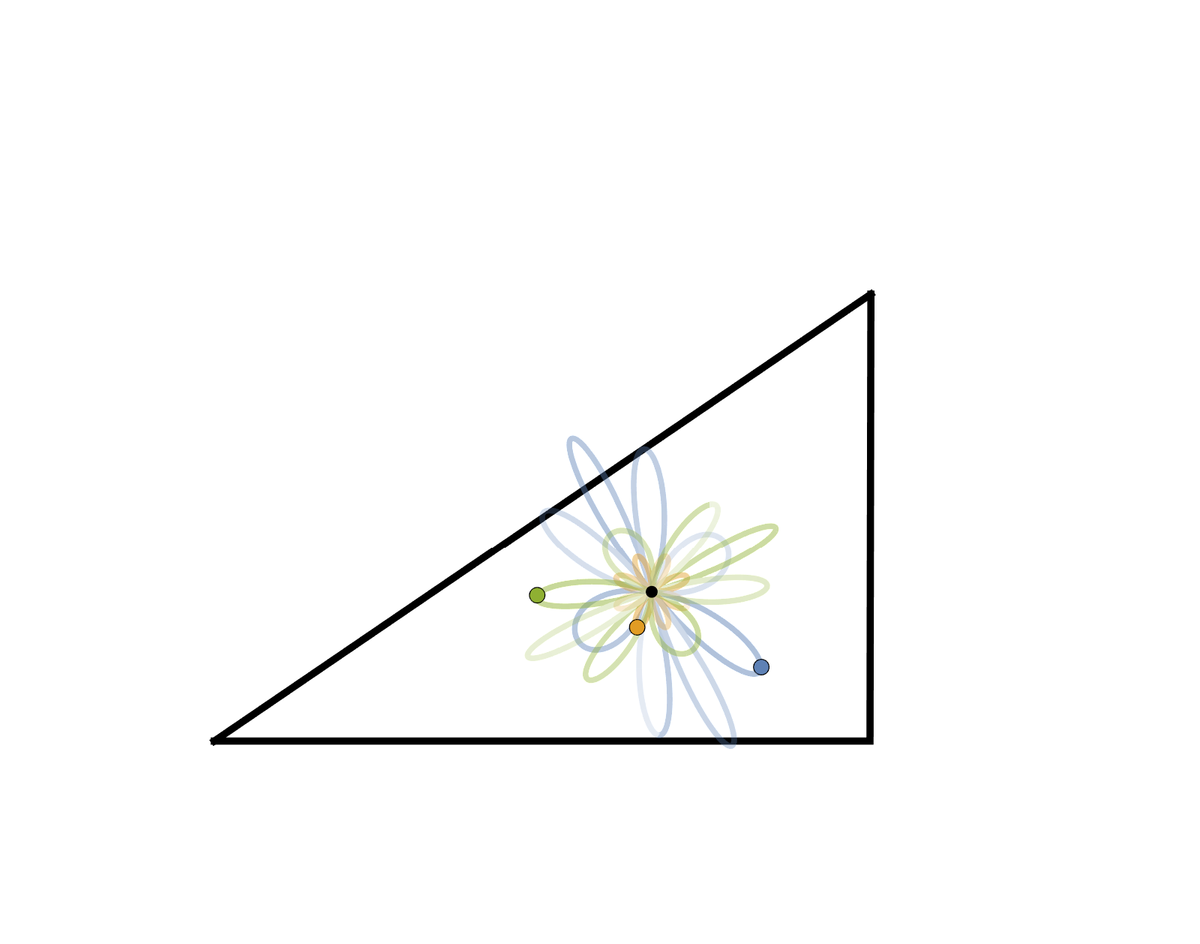}
\includegraphics[width=0.32\textwidth, trim = 5.3cm 2.cm 4.9cm 5cm,clip]{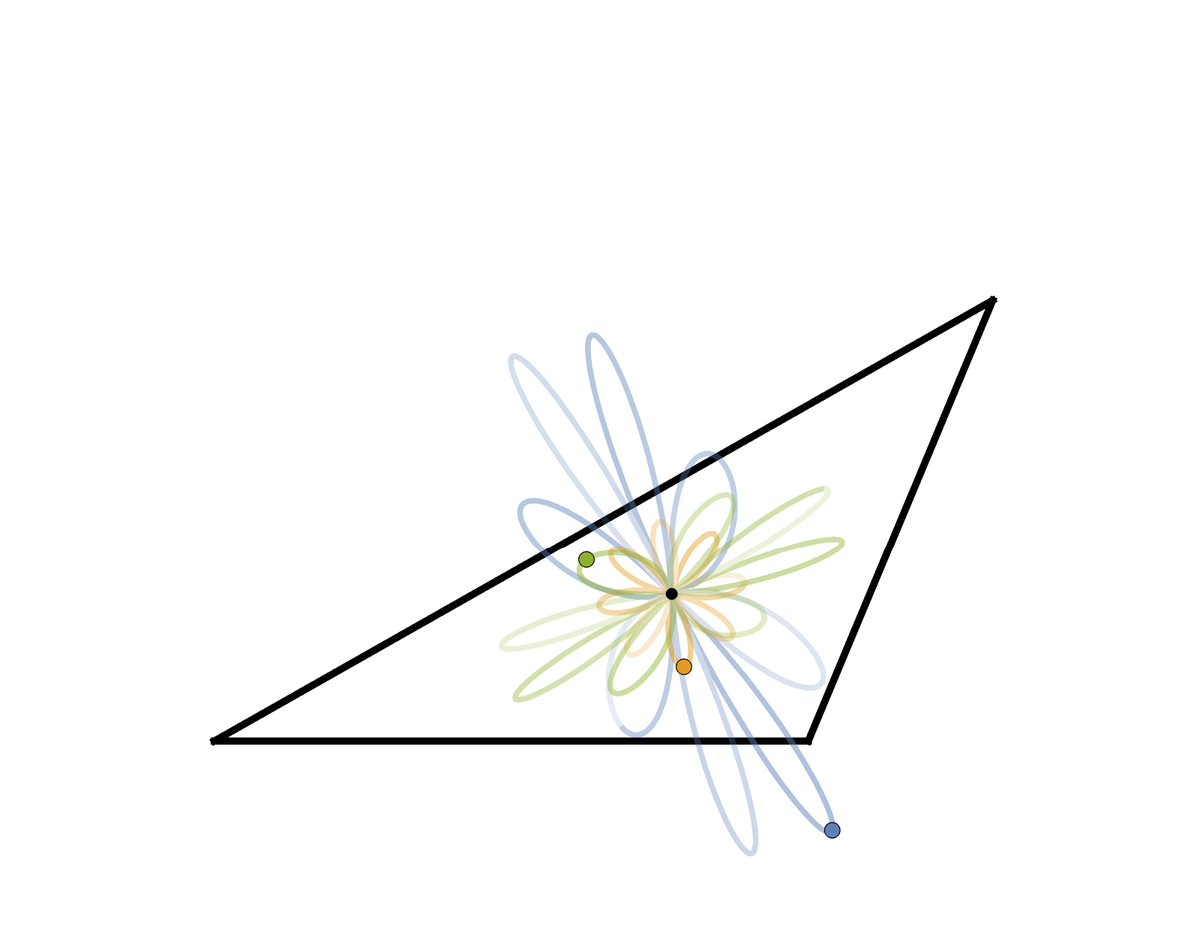}
\caption{Illustration of the outcome of our ``inverse design problem'': identifying triangle families that trace rose curves with the semi-invariant curve-generating centers $\bX_3$ (blue) and $\bX_6$ (green) and the invariant curve-generating center $\bX_{13}$ (yellow). }\label{fig:inverse}
\end{figure}

\begin{example}
    
\thmref{thm:inverse} allows us to find triangle families that, in combination with $\psi\in\Omega$, generate sheared \emph{rose curves}, whose parametrization in polar coordinates is given by 
    $$
    r(t) = a \cos (n t) \quad \text{and} \quad \theta(t) = t.
    $$
    The resulting triangle family simplifies to  
    \begin{align*}
    \bPhiR \colon t \mapsto  \begin{pmatrix} 
    1+a \cos(nt) \left(\cos \frac{t}{3} + \sqrt{3}\sin\frac{t}{3}
    \right)     \\
     1+a \cos(nt) \left(\cos \frac{t}{3} - \sqrt{3}\sin\frac{t}{3}
    \right) \\
       1-2 a \cos(nt) \cos \frac{t}{3} 
    \end{pmatrix}.
    \end{align*}
\figref{fig:inverse} shows an example with three triangle centers tracing the target rose curve with $a = 1$ and $n = 4$. 
\end{example}

\section{Conclusion}

Inspired by Kimberling's definition of triangle centers, we introduced a framework for defining triangle families, the so-called $\bPsi$-triangle families, using triples of functions. We demonstrated that these families exhibit a rich and intriguing structure. We then investigated the curves traced by triangle centers along $\bPsi$-triangle families. In doing so, we identified general properties of these curves and systematically searched for triangle centers that generate semi-invariant or fully invariant curves. We hope that the ideas and results presented in this work will inspire further exploration into the geometry of triangles and outline some  potential future research directions.

Beyond the properties of semi-invariance and invariance, other aspects of triangle curves may prove interesting, such as potential relationships between curves traced by pairs of triangle centers, such as isogonal conjugates, independent of their (semi-)invariance. Our work on semi-invariant curves builds upon the relationship between the Maclaurin trisectrix and the aliquot triangle families. The exploration of triangle centers that generate other (algebraic) triangle curves with respect to the aliquot family is left for future work. 

Ultimately, a complete classification of triangle center functions with respect to their traced curves could be a long-term goal.  Consequently, the development of more sophisticated theoretical methods would be desirable, both to provide deeper geometric insights and to support the classification of triangle centers based on their relationship to semi-invariant and invariant triangle curves.

\section*{Acknowledgements}
We thank the current and past members of the Geometric Computing Laboratory at EPFL, in particular Prof.~Mark Pauly, for insightful discussions.
Klara Mundilova was supported by the Swiss Government Excellence Scholarship of the Swiss Confederation.

\bibliographystyle{plainnat} % required for using natbib
\bibliography{sample}

@string {FG = "Forum Geom."}

@string {JOG = "J.\ Geom."}

@string {JGG = "J.\ Geometry Graphics"}

@string {KOG = "KoG"}

@string {MCS = "Math.\ Comput.\ Sci."}

@string {MG = "Math.\ Gaz."}

@string { RHAZU = "Rad HAZU, Matemati{\v{c}}ke znanosti"}

@string {ICGG = "Proceedings of the 21st ICGG"}

@inproceedings{Mundilova:2024:MTT,
  title={\href{https://doi.org/10.1007/978-3-031-71225-8_12}{Maclaurin Trisectrices as t-Affine Loci of the First Isogonic and Isodynamic Centers}},
  author={Mundilova, K.},
  booktitle=ICGG,
  pages={136--151},
  year={2024},
  organization={Springer}
}

@article{Dirnbock:2003:Curves,
  title={\href{https://www.heldermann-verlag.de/jgg/jgg07/jgg0702.pdf}{Curves related to triangles: The Balaton-Curves}},
  author={Dirnb{\"o}ck, H. and Schoi{\ss}engeier, J.},
  journal=JGG,
  volume={7},
  pages={23--39},
  year={2003}
}

@article{Abu:2007:ACA,
  title={\href{https://www.heldermann-verlag.de/jgg/jgg11/j11h1abus.pdf}{Another cubic associated with a triangle}},
  author={Abu-Saymeh, S. and Hajja, M. and Stachel, H.},
  journal=JGG,
  volume={11},
  number={1},
  pages={15--26},
  year={2007}
}

@inproceedings{Abu:2005:TCL,
  title={\href{https://www.researchgate.net/profile/Mowaffaq-Hajja/publication/251452056_Triangle_Centers_with_Linear_Intercepts_and_Linear_Subangles/links/00b7d5296d8e9de4b1000000/Triangle-Centers-with-Linear-Intercepts-and-Linear-Subangles.pdf}{Triangle centers with linear intercepts and linear subangles}},
  author={Abu-Saymeh, S. and Hajja, M.},
  booktitle= FG,
  volume={5},
  pages={33--36},
  year={2005}
}

@article{Kodrnja:2017:LVN,
  title={\href{https://hrcak.srce.hr/clanak/450110}{The Loci of Vertices of Nedian Triangles}},
  author={I. Kodrnja and H. Koncul},
  journal=KOG,
  volume={21},
  number={21},
  pages={1--6},
  year={2017}
}

@article{Kodrnja:2023:LCT,
  title={\href{https://doi.org/10.31896/k.27.4}{Locus Curves in Triangle Families}},
  author={I. Kodrnja and H. Koncul},
  journal=KOG,
  volume={27},
  number={27},
  pages={35--42},
  year={2023}
}

@article{Cundy:1995:SCC,
  title={\href{https://doi.org/10.1007/BF01224039}{Some cubic curves associated with a triangle}},
  author={Cundy, H. M. and Parry, C. F.},
  journal=JOG,
  volume={53},
  pages={41--66},
  year={1995},
  publisher={Springer}
}

@article{Odehnal:2011:PLT,
  title={\href{https://heldermann-verlag.de/jgg/jgg15/j15h1odeh.pdf}{Poristic loci of triangle centers}},
  author={Odehnal, B.},
  journal=JGG,
  volume={15},
  number={1},
  pages={45--67},
  year={2011}
}

@article{Satterly:1956:NNT,
  title={\href{https://doi.org/10.2307/3609672}{The nedians, the nedian triangle and the aliquot triangle of a plane triangle}},
  author={Satterly, J.},
  journal=MG,
  volume={40},
  number={332},
  pages={109--113},
  year={1956},
  publisher={Cambridge University Press}
}

@article{Jurkin:2022:LOC,
  title={\href{https://doi.org/10.21857/94kl4cl6wm}{Loci of centers in pencils of triangles in the isotropic plane}},
  author={Jurkin, E.},
  journal=RHAZU,
  number={551=26},
  pages={155--169},
  year={2022},
  publisher={Hrvatska akademija znanosti i umjetnosti}
}

@book{Kimberling:1998:TCC,
    author = {C. Kimberling},
    title = {Triangle Centers and Central Triangles},
    publisher = {Utilitas Mathematica Publishing, Inc.},
    year = {1998}
}

@article{Narboux:2016:TCV,
  title={\href{https://doi.org/10.1007/s11786-016-0254-4}{Towards a certified version of the encyclopedia of triangle centers}},
  author={Narboux, Julien and Braun, David},
  journal=MCS,
  volume={10},
  pages={57--73},
  year={2016},
  publisher={Springer}
}

@misc{Kimberling:2025:ETC,
  author = {Kimberling, C.},
  title = {Encyclopedia of Triangle Centers},
  url = {https://faculty.evansville.edu/ck6/encyclopedia/ETC.html},
  urldate = {2025-02-6}
}

\newpage
\appendix

\section{Supplemental Material for \secref{sec:triangleCenters}}\label{sec:app:section2}

\begin{proof}[Proof of \lemref{lem:equivCenterFunctions}]
   The claim follows directly from 
   \begin{align*}
       \bX_{\psi_1} &= [f(a,b,c)\, \psi_0(a,b,c) : f(b,c,a) \,\psi_0(b,c,a) : f(c,b,a) \,\psi_0(c,a,b)]\\
       &= [f(a,b,c) \,\psi_1(a,b,c) : f(a,b,c)\,\psi_0(b,c,a) : f(a,b,c) \,\psi_0(c,a,b)] \\
       &= [ \psi_0(a,b,c) : \psi_0(b,c,a) : \psi_0(c,a,b)] \\
       &= \bX_{\psi_0}
   \end{align*}
   since $f(a,b,c)\neq 0$.
\end{proof}

\begin{example}[Traceable triangle centers]\label{ex:traceableTriangleCenters} Examples of traceable triangle centers are given by the incenter \(\bX_1\), centroid \(\bX_2\), the circumcenter $\bX_3$, the orthocenter $\bX_4$, and the symmedian point $\bX_6$, as for the triangle center functions given in \tblref{tab:trianglecenters}, the corresponding traces simplify to
\begin{align}
\nmlz{\phi_1}(a,b,c) &= a + b + c \neq 0 &
 \nmlz{\phi_2}(a,b,c) &= 3 \neq 0, \nonumber \\ 
 \nmlz{\phi_3}(a,b,c) &= (4 \area)^2 \neq 0, &
 \nmlz{\phi_6}(a,b,c) &= a^2 +b^2 + c^2 \neq 0. \label{eqn:trace36}
\end{align}

The triangle centers $\bX_{13}$, $\bX_{14}$, $\bX_{15}$, and $\bX_{16}$ are also traceable. However, the computations are more involved and are presented in the accompanying Mathematica notebook. 
At this point, however, we note that for computing the trace, the corresponding triangle center functions in \tblref{tab:trianglecenters} can be expressed in terms of the triangle side lengths $a$, $b$, and $c$ using the law of cosines, resulting in
\begin{align}\label{eqn:phi15abc}
\phi_{15}(a,b,c) & = \frac{1}{4bc}\left(\sqrt{3}(-a^2 + b^2 + c^2) + 4 \area  \right), \\
\phi_{16}(a,b,c) & = \frac{1}{4bc}\left(\sqrt{3}(-a^2 + b^2 + c^2) - 4 \area  \right).
\end{align}
Since the two isodynamic centers are isogonic conjugates of the isogonic centers, it follows that $\phi_{13} = \phi^{-1}_{15}$ and $\phi_{14} = \phi^{-1}_{16}$.
\end{example}

\begin{example}[Not traceable triangle centers]\label{ex:nottraceableTriangleCenters}
On the other hand, the Feuerbach point $\bX_{11}$ is an example of a non-traceable triangle center. 
Its triangle center function is listed in \tblref{tab:trianglecenters} and vanishes for equilateral triangles, \ie, \(a=b=c\), because of the factor $b - c$. Consequently, $\nmlz{\phi_{11}}(a,a,a) = 0$. 

A general family of examples of non-traceable triangle centers is obtained as differences of two distinct bi-symmetric homogeneous polynomials of the same degree, \eg, 
\begin{equation}\label{eqn:notphi1}
\psi(a,b,c)=a^2 + b^2+c^2 - (ab+ bc + ca).
\end{equation}
These have zeros whenever the triangle is equilateral, consequently $\nmlz{\psi}(a,a,a) = 0$, and hence \(\psi\) is not traceable\footnote{Since  the triangle center function stated in Equation~\eqref{eqn:notphi1} is cyclic, one might expect that it is equivalent to the triangle center function $\phi_1(a,b,c) = 1$ of the incenter. However, the factor $f(a,b,c) = \nicefrac{\psi(a,b,c)}{\phi_1(a,b,c)} = \psi(a,b,c)$ vanishes for equilateral triangles,  $\psi(a,a,a) = 0$, and thus $f\not\in\cycT$ and therefore $\psi\not\cong\phi_1$.}.

\end{example}

\begin{example}[Two not essentially different triangle centers]\label{ex:essentiallyDifferent} With  $\bX_3$ and $\bX_{63}$, we find a pair of triangle centers that are not essentially different. Their corresponding triangle center functions are given by
$$
\phi_3(a,b,c) = a(-a^2 + b^2 + c^2) \quad \text{and} \quad
\phi_{63}(a,b,c) = -a^2 +b^2 + c^2.
$$

To determine triangle configurations where the triangle centers coincide, we require their corresponding barycentric coordinates to be identical. Consequently, we consider the following system of equations.
$$
\frac{a \,\phi_3(a,b,c)}{\nmlz{\phi_3}(a,b,c)} = \frac{a \,\phi_{63}(a,b,c)}{\nmlz{\phi_{63}}(a,b,c)}, 
\quad 
\frac{b\,\phi_3(b,c,a)}{\nmlz{\phi_3}(a,b,c)} = \frac{b\,\phi_{63}(b,c,a)}{\nmlz{\phi_{63}}(a,b,c)}, 
\quad 
\frac{c\,\phi_3(c,a,b)}{\nmlz{\phi_3}(a,b,c)} = \frac{c\,\phi_{63}(c,a,b)}{\nmlz{\phi_{63}}(a,b,c)}.
$$
It is straightforward to verify that for $s>0$, the triples of lengths
$$
(s,s,s), \quad 
\left(\sqrt{2}s, s, s\right), \quad
\left(s, \sqrt{2}s, s\right), \quad \text{ and }\quad 
\left(s, s, \sqrt{2}s\right),
$$
satisfy the three constraints and
correspond to triangles edge lengths since they are positive and satisfy the triangle inequalities. While the first corresponds to an equilateral triangle, the other three represent the three possible assignments of edge lengths in a right-angled triangle.
In these cases, $\bX_{3}$ and $\bX_{63}$ coincide with the midpoint of the hypothenuse. Therefore, the triangle centers are not essentially different.
\end{example}

\begin{proof}[Proof of \lemref{lem:essentiallDifferent}]

To test whether two traceable triangle centers are essentially different, it is sufficient to compare their normalized triangle center functions. For two triangle center functions $\psi_1$ and $\psi_2$, we consider the system of equations 
$$
\frac{\psi_1(a,b,c)}{\nmlz{\psi_1}(a,b,c)} = \frac{\psi_2(a,b,c)}{\nmlz{\psi_2}(a,b,c)},
\quad
\frac{\psi_1(b,c,a)}{\nmlz{\psi_1}(a,b,c)} = \frac{\psi_2(b,c,a)}{\nmlz{\psi_2}(a,b,c)},
\quad \text{and} \quad 
\frac{\psi_1(c,a,b)}{\nmlz{\psi_1}(a,b,c)} = \frac{\psi_2(c,a,b)}{\nmlz{\psi_2}(a,b,c)}.
$$
Since this system of equations has rank two, we solve it for two of its unknowns, the side lengths $b$ and $c$.

The attached \emph{Mathematica} notebook verifies that the only solutions obtained either correspond to the equilateral triangle or no triangle.
\end{proof}

\begin{proof}[Proof of \lemref{lem:cycaffineCenter}]

We show that the associated centers take the claimed form.
First, note that
\begin{align*}
\nmlz{\psi_{\bomega}} &= a\, \psi_{\bomega}(a,b,c)  + b\, \psi_{\bomega}(b,c,a) + c\, \psi_{\bomega}(c,a,b) \\
&= a\, \omega_0(a,b,c)\, \psi_0(a,b,c) + b\, \omega_0(b,c,a)\,\psi_0(b,c,a)  + c\, \omega_0(c,a,b)\,\psi_0(c,a,b) \\
  & \quad +\, a\,\omega_1(a,b,c) \,\psi_0(a,b,c)  + b\, \omega_1(b,c,a) \,\psi_0(b,c,a)  + c\,\omega_1(c,a,b)\, \psi_0(c,a,b)\\
&=\omega_0\left( a\, \psi_0(a,b,c) + b\, \psi_0(b,c,a)  + c\, \psi_0(c,a,b) \right) \\
& \quad  +\, \omega_1\left( a\, \psi_0(a,b,c)  + b\, \psi_0(b,c,a)  + c\, \psi_0(c,a,b) \right)\\
  &=\omega_0 \nmlz{\psi_0}  + \omega_1 \nmlz{\psi_1}.
\end{align*}

If \(
 \nmlz{\psi_{\bomega}}\neq 0\), then by the definition of the associated triangle center, a similar computation yields 
\begin{align*}
\bX_{\psi_{\bomega}} &= \frac{1}{\nmlz{\psi_{\bomega}}}\left(
a\, \psi_{\bomega}(a,b,c) \bA + b\, \psi_{\bomega}(b,c,a) \bB + c\, \psi_{\bomega}(c,a,b) \bC \right)\\
  &= \frac{1}{\nmlz{\psi_{\bomega}}}\left( \omega_0\nmlz{\psi_0} \bX_{\psi_0} + \omega_1 \nmlz{\psi_1} \bX_{\psi_1}\right).
\end{align*}
 This verifies the identity stated in~\teqref{eqn:x_omega}, and since coefficients sum up to 1, collinearity follows.

  Furthermore, $\psi_{\bomega}$ being a cyclic-affine combination of $\psi_{0}$ and $\psi_{1}$ (see~\teqref{eqn:cyclicaffine}), implies that we can express $\psi_1$ in terms of $\psi_0$ and $\psi_{\bomega}$ as 
    $$
     \psi_{1}(a,b,c) = -\frac{\omega_0(a,b,c)}{\omega_1(a,b,c)} \psi_0(a,b,c) +\frac{1}{ \omega_1(a,b,c)} \psi_{\bomega}(a,b,c).
    $$
    Note that $\deg(\nicefrac{\psi_{\bomega}}{\omega_1})= \deg(\nicefrac{\omega_0 \psi_0}{\omega_1})$ and that the coefficients of the triangle center functions are still cyclic and nowhere zero. Consequently, $\psi_1\in \spann_{\cycT}(\psi_0, \psi_1)$, which proves the claim.     
\end{proof}

\begin{proof}[Proof of \lemref{lem:cycAffineIndependence}]
     To show that the definition of cyclic-affine combinations is independent of the choice of triangle center functions, assume that $\psi_{\bomega}$ is the triangle center function in \teqref{eqn:cyclicaffine} and $\psi_1 \cong \tilde\psi_1$, \ie, $\tilde \psi_1 = f \psi_1$ for $f\in\cycT$.
    Then,
    $$
    \psi_{\bomega}(a,b,c) = \omega_0(a,b,c) \psi_0(a,b,c) + \frac{\omega_1(a,b,c)}{f(a,b,c)} \tilde \psi_1(a,b,c),
    $$
    with $\tilde\omega_1 = \nicefrac{\omega_1}{f}\in \cycT$ and $\deg(\omega_0 \psi_0) = \deg(\tilde\omega_1 \tilde\psi_1)$, showing that 
    $$
    \psi_{\bomega} \in \spann_{\cycT}(\psi_0, \psi_1) \quad \Longleftrightarrow
    \quad 
     \psi_{\bomega} \in \spann_{\cycT}(\psi_0, \tilde\psi_1).
    $$
    Consequently, $\spann_{\cycT}(\psi_0, f_1 \psi_1) = \spann_{cycT}(\psi_0, \psi_1)$.

    An analogous argument can be made for a cyclic factor of $\psi_0$.
 \end{proof}

\begin{example}[Collinearity of triangle centers $\bX_3$, $\bX_6$, and $\bX_{15}$]\label{ex:X15inBrocard}
\lemref{lem:cycAffineIndependence} allows us to algebraically recover known collinearities, such as the collinearity of the triangle centers $\bX_3$, $\bX_6$, and $\bX_{15}$, corresponding to the triangle center functions listed in \tblref{tab:trianglecenters}.

As discussed in \exref{ex:traceableTriangleCenters}, the triangle center function $\phi_{15}$ in \tblref{tab:trianglecenters} can be rewritten using only the triangle's edge lengths, see~\teqref{eqn:phi15abc}.  
By applying~\lemref{lem:equivCenterFunctions}, we obtain an equivalent representation of the triangle center function $\phi_{15}$ by multiplying by the factor $2 a b c \in \cycT$, 
\begin{equation}\label{eqn:phi15}
\phi_{15}(a,b,c) \cong a \left( \sqrt{3} (-a^2 + b^2 + c^2) + 4 \area\right).
\end{equation}

We observe that this expression allows for a natural decomposition into two components, 
$$
\phi_{15}(a,b,c) = \omega_0(a,b,c)\, \phi_3(a,b,c) +  \omega_1(a,b,c) \,\phi_6(a,b,c), 
$$
where $\omega_0(a,b,c) = \sqrt{3} \in \cycT$ and $\omega_1(a,b,c) = 4 \area \in \cycT$ with $\deg(\omega_0 \phi_3) = \deg(\omega_1\phi_6) = 3$. 
Furthermore, using the simplified expressions for the traces in \teqref{eqn:trace36}
 and \teqref{eqn:x_omega}, we obtain 
\begin{align*}
\bX_{15} 
&= \frac{4\sqrt{3} \area}{4\sqrt{3} \area + a^2 + b^2 + c^2}\bX_3 + \frac{a^2 + b^2 + c^2}{4\sqrt{3} \area + a^2 + b^2 + c^2}\bX_6.
\end{align*} 
Note how the coefficients of $\bX_{15}$ depend on the shape of the triangle.
\end{example}

\begin{proof}[Proof of \lemref{lem:triangleLineCycAffine}]
Let $\bX_{\psi}$ be a traceable triangle center collinear with $\bX_{\psi_0}$ and $\bX_{\psi_1}$ that is essentially different from $\bX_{\psi_0}$ and $\bX_{\psi_1}$. It follows that there exists a function $f\colon\triangle\to\RR$ with $f(a,b,c) \neq 0$ for all $\Delta\in\triangle$, such that
$$
\bX_{\psi} = \left(1-f(a,b,c)\right) \bX_{\psi_0} + f(a,b,c) \bX_{\psi_1}.
$$

We start by showing that $\bX_{\psi}$ being a triangle center implies $f(a,b,c)\in\cycT$, \ie, it is cyclic, homogeneous (in this case with homogeneity degree 0), and bi-symmetric.
To this end, let $\psi_0$, $\psi_1$ be the triangle center functions corresponding to $\bX_{\psi_0}$ and $\bX_{\psi_1}$. It follows that   
\begin{align*}
\bX_{\psi} &=  \left(1-f(a,b,c)\right)\frac{a\, \psi_0(a,b,c) \bA + b \,\psi_0(b,c,a)\bB + c\, \psi_0(c,a,b)\bC}{\nmlz{\psi_0}(a,b,c)}\\ 
& \quad +  f(a,b,c) \frac{a\, \psi_1(a,b,c) \bA + b\, \psi_1(b,c,a)\bB + c \,\psi_1(c,a,b)\bC}{\nmlz{\psi_1}(a,b,c)} \\
&=  a \left( \left(1-f(a,b,c)\right) \frac{\psi_0(a,b,c)}{\nmlz{\psi_0}(a,b,c)} + f(a,b,c) \frac{\psi_1(a,b,c)}{\nmlz{\psi_1}(a,b,c)}\right) \bA \\
& \quad + b \left(\left(1-f(a,b,c)\right) \frac{\psi_0(b,c,a)}{\nmlz{\psi_0}(a,b,c)} + 
f(a,b,c) \frac{\psi_1(b,c,a)}{\nmlz{\psi_1}(a,b,c)}\right) \bB \\
& \quad + c \left( \left(1-f(a,b,c)\right) \frac{\psi_0(c,a,b)}{\nmlz{\psi_0}(a,b,c)} + 
f(a,b,c) \frac{\psi_1(c,a,b)}{\nmlz{\psi_1}(a,b,c)}\right) \bC.
\end{align*}
Since $\bX_{\psi}$ is a triangle center, it relates to a triangle center function $\psi$ as follows
$$
\bX_{\psi} = \frac{1}{\nmlz{\psi}(a,b,c)}\left(a\,  \psi(a,b,c) \bA + b \,\psi(b,c,a)\bB + c \,\psi(c,a,b)\bC\right).
$$
Comparison of the coefficients, we conclude that 
\begin{align*} 
\frac{\psi(a,b,c)}{\nmlz{\psi}(a,b,c)} &= \left(1-f(a,b,c)\right) \frac{\psi_0(a,b,c)}{\nmlz{\psi_0}(a,b,c)} + f(a,b,c) \frac{\psi_1(a,b,c)}{\nmlz{\psi_1}(a,b,c)} \\
\frac{\psi(b,c,a)}{\nmlz{\psi}(a,b,c)} &= \left(1-f(a,b,c)\right) \frac{\psi_0(b,c,a)}{\nmlz{\psi_0}(a,b,c)} + f(a,b,c) \frac{\psi_1(b,c,a)}{\nmlz{\psi_1}(a,b,c)} \\
\frac{\psi(c,a,b)}{\nmlz{\psi}(a,b,c)} &= \left(1-f(a,b,c)\right) \frac{\psi_0(c,a,b)}{\nmlz{\psi_0}(a,b,c)} + f(a,b,c) \frac{\psi_1(c,a,b)}{\nmlz{\psi_1}(a,b,c)}.
\end{align*}

\begin{itemize}
    \item \emph{Cyclicity:}
We find the solutions $f_i$ that solve the $i$-th equation for $f$, namely
\begin{align*}
    f_0(a,b,c) &= \left(\frac{\psi_0(a,b,c)}{\nmlz{\psi_0}(a,b,c)}
    - \frac{\psi(a,b,c)}{\nmlz{\psi}(a,b,c)} 
    \right)
    \left(\frac{\psi_0(a,b,c)}{\nmlz{\psi_0}(a,b,c)} - \frac{\psi_1(a,b,c)}{\nmlz{\psi_1}(a,b,c)}\right)^{-1} \\
    f_1(a,b,c) &= \left(\frac{\psi_0(b,c,a)}{\nmlz{\psi_0}(a,b,c)}
    - \frac{\psi(b,c,a)}{\nmlz{\psi}(a,b,c)} 
    \right)
    \left(\frac{\psi_0(b,c,a)}{\nmlz{\psi_0}(a,b,c)} - \frac{\psi_1(b,c,a)}{\nmlz{\psi_1}(a,b,c)}\right)^{-1} \\
    f_2(a,b,c) &= \left(\frac{\psi_0(c,a,b)}{\nmlz{\psi_0}(a,b,c)}
    - \frac{\psi(c,a,b)}{\nmlz{\psi}(a,b,c)} 
    \right)
    \left(\frac{\psi_0(c,a,b)}{\nmlz{\psi_0}(a,b,c)} - \frac{\psi_1(c,a,b)}{\nmlz{\psi_1}(a,b,c)}\right)^{-1}.
\end{align*}
Note that $f_0(c,a,b) = f_1(b,c,a) = f_2(a,b,c)$. Consequently, the unique solution $f(a,b,c) := f_0(a,b,c)$ is cyclic. 

\item \emph{Homogeneity:} Next, using the homogeneity property of the triangle center functions $\psi$, $\psi_0$, and $\psi_1$, we show that $f(ta,tb,tc) = f(a,b,c)$, that is,
\begin{align*}
f(t a,t b,tc) &= \left(\frac{\psi_0(ta,tb,tc)}{\nmlz{\psi_0}(ta,tb,tc)}
    - \frac{\psi(ta,tb,tc)}{\nmlz{\psi}(ta,tb,tc)} 
    \right)
    \left(\frac{\psi_0(ta,tb,tc)}{\nmlz{\psi_0}(ta,tb,tc)} - \frac{\psi_1(ta,tb,tc)}{\nmlz{\psi_1}(ta,tb,tc)}\right)^{-1}\\
    &= \left(\frac{1}{t}\frac{\psi_0(a,b,c)}{\nmlz{\psi_0}(a,b,c)}
    - \frac{1}{t}\frac{\psi(a,b,c)}{\nmlz{\psi}(a,b,c)} 
    \right)
    \left(\frac{1}{t}\frac{\psi_0(a,b,c)}{\nmlz{\psi_0}(a,b,c)} - \frac{1}{t}\frac{\psi_1(a,b,c)}{\nmlz{\psi_1}(a,b,c)}\right)^{-1}\\
&= f(a,b,c).
\end{align*}

\item \emph{Bi-symmetry: } Building on the bi-symmetry of the triangle center functions $\psi$, $\psi_0$, and $\psi_1$, a similar argument shows that 
\begin{align*}
f(a,c,b) &= \left(\frac{\psi_0(a,c,b)}{\nmlz{\psi_0}(a,b,c)}
    - \frac{\psi(a,c,b)}{\nmlz{\psi}(a,b,c)} 
    \right)
    \left(\frac{\psi_0(a,c,b)}{\nmlz{\psi_0}(a,b,c)} - \frac{\psi_1(a,c,b)}{\nmlz{\psi_1}(a,b,c)}\right)^{-1} \\
    &= \left(\frac{\psi_0(a,b,c)}{\nmlz{\psi_0}(a,b,c)}
    - \frac{\psi(a,b,c)}{\nmlz{\psi}(a,b,c)} 
    \right)
    \left(\frac{\psi_0(a,b,c)}{\nmlz{\psi_0}(a,b,c)} - \frac{\psi_1(a,b,c)}{\nmlz{\psi_1}(a,b,c)}\right)^{-1}\\
&= f(a,b,c).
\end{align*}
\end{itemize}
We therefore conclude that the triangle center function of $\bX_{\psi}$ allows the representation 
$$
\psi(a,b,c) = \left(1 - f(a,b,c)\right) \frac{\psi_0(a,b,c)}{\nmlz{\psi_0}(a,b,c)} + f(a,b,c) \frac{\psi_1(a,b,c)}{\nmlz{\psi_1}(a,b,c)},
$$
where $f\in\cycT$ with homogeneity degree 0.

Finally, we show that the above expression is equivalent a cyclic-affine combination of $\psi_0$ and $\psi_1$, see \teqref{eqn:cyclicaffine}. To this end, we rewrite 
\begin{align*}
\psi_{\omega}(a,b,c) &=
 \left(\omega_0 \nmlz{\psi_0} + \omega_1 \nmlz{\psi_1}\right) \left(\left(1 - \frac{\omega_1\nmlz{\psi_1}}{\omega_0 \nmlz{\psi_0} + \omega_1 \nmlz{\psi_1}}\right) \psi_0 +   \frac{\omega_1\nmlz{\psi_1}}{\omega_0 \nmlz{\psi_0} + \omega_1 \nmlz{\psi_1}} \psi_1\right).
\end{align*}
Defining the functions 
\begin{align*} 
\omega = \omega_0 \nmlz{\psi_0} + \omega_1 \nmlz{\psi_1} \quad 
\text{and} \quad 
f = \frac{\omega_1 \nmlz{\psi_1}}{\omega_0 \nmlz{\psi_0} + \omega_1 \nmlz{\psi_1}},
\end{align*} 
we obtain the conversion between the equivalent triangle function representations $\psi_{\omega} \cong \omega\, \psi$, as 
$$
\psi_{\omega} = \omega \left(\left(1-f\right)\frac{\psi_0}{\nmlz{\psi_0}} + f \frac{\psi_1}{\nmlz{\psi_1}}\right) \cong (1-f)\frac{\psi_0}{\nmlz{\psi_0}} + f \frac{\psi_1}{\nmlz{\psi_1}} = \psi,
$$
which concludes the proof.
\end{proof}

\begin{proof}[Proof of \lemref{lem:essDifferentConstAffine}]
We first show that $\psi_1$ is essentially different from $\psi_{\lambda_0:\lambda_1}$ by a proof by contradiction. 
For the sake of the argument, we assume that $\psi_0$ and $\psi_{\lambda_0:\lambda_1}$ are not essentially different. Consequently, there exists a triangle $(a,b,c)$, which is not equilateral, for which 
\begin{align*}
\frac{\psi_0(a,b,c)}{\nmlz{\psi_0}(a,b,c)} &= 
\frac{\psi_{\lambda_0:\lambda_1}(a,b,c)}{\nmlz{\psi_{\lambda_0:\lambda_1}}(a,b,c)}\\
&= \frac{\lambda_0\, \psi_0(a,b,c) + \lambda_1 \, \psi_1(a,b,c)}{\lambda_0 \nmlz{\psi_0}(a,b,c) + \lambda_1 \nmlz{\psi_1}(a,b,c)}.
\end{align*}
Simplifying this equation results in 
    $$
   \frac{\psi_0(a,b,c)}{\nmlz{\psi_0}(a,b,c)} = \frac{\psi_1(a,b,c)}{\nmlz{\psi_1}(a,b,c)},
$$
which is a contradiction.  Consequently, $\psi_0$ and $\psi_{\lambda_0:\lambda_1}$ are essentially different. 
The argumentation for the case where $\psi_{\lambda_0:\lambda_1}$ and $\psi_1$ are essentially different is analogous. 

To show that $\psi_{\lambda_0:\lambda_1}$ and $\psi_{\overline{\lambda}_0:\overline{\lambda}_1}$ are essentially different, we again employ a proof by contradiction. 
Again, assume that $\psi_{\lambda_0:\lambda_1}$ and $\psi_{\overline{\lambda}_0:\overline{\lambda}_1}$ are not essentially different. Consequently, there exists a non-equilateral triangle $(a,b,c)$ for which 
\begin{align*}
    \frac{\psi_{\lambda_0:\lambda_1}(a,b,c)}{\nmlz{\psi_{\lambda_0:\lambda_1}}(a,b,c)} &= \frac{\psi_{\overline{\lambda}_0:\overline{\lambda}_1}(a,b,c)}{\nmlz{\psi_{\overline{\lambda}_0:\overline{\lambda}_1}}(a,b,c)}.
\end{align*}
Similar to before, this equation simplifies to 
$$
0 = \left(\frac{\lambda_0}{\lambda_1}- \frac{\overline{\lambda}_0}{\overline{\lambda}_1}\right)\left(\frac{\psi_0(a,b,c)}{\nmlz{\psi_0}(a,b,c)} - \frac{\psi_1(a,b,c)}{\nmlz{\psi_1}(a,b,c)}\right).
$$
Since the first factor is non-zero, the second factor needs to vanish, which is a contradiction to our assumption that $\psi_0$ and $\psi_1$ are essentially different. Consequently, $\psi_{\lambda_0:\lambda_1}$ and 
$\psi_{\overline{\lambda}_0:\overline{\lambda}_1}$ are essentially different.
\end{proof}

\begin{lemma}\label{lem:AffineFromCyclic}
Two essentially different triangle centers $\bX_{\psi_0}$ and $\bX_{\psi_1}$, together with a third triangle center $\bX_{\psi_{\bomega}} \in \spann_{\cycT}(\bX_{\psi_0}, \bX_{\psi_1})$ define a family of constant-affine triangle centers that is independent of the choice of triangle center functions and contains $\bX_{\psi_{\bomega}}$. We refer to this family as $\spann_{\const}(\bX_{\psi_0}, \bX_{\psi_1}, \bX_{\psi_{\bomega}})$. 

Although by construction $\bX_{\psi_{\bomega}}\in \spann_{\const}(\bX_{\psi_0}, \bX_{\psi_1}, \bX_{\psi_{\bomega}})
$, the other two points are by definition never contained in the family, \ie,  $\bX_{\psi_0}, \bX_{\psi_1}\not\in\spann_{\const}(\bX_{\psi_0}, \bX_{\psi_1}, \bX_{\psi_{\bomega}})$.
However, 
$$
\bX_{\psi_{\overline{\bomega}}}\in \spann_{\const}(\bX_{\psi_0}, \bX_{\psi_1}, \bX_{\psi_{\bomega}})
\quad \text{implies}\quad
\bX_{\psi_{\bomega}}\in \spann_{\const}(\bX_{\psi_0}, \bX_{\psi_1}, \bX_{\psi_{\overline{\bomega}}}).
$$
\end{lemma}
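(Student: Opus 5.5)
Normalization gives the construction. Replace $\psi_0,\psi_1$ by their normalized representatives $\hat\psi_i = \psi_i/\nmlz{\psi_i}$, which are traceable because the two centers are essentially different and hence define points. By \lemref{lem:cycAffineIndependence} and \lemref{lem:triangleLineCycAffine}, the center $\bX_{\psi_{\bomega}}$ has a unique representation
\[
\bX_{\psi_{\bomega}} = (1-f)\,\bX_{\psi_0} + f\,\bX_{\psi_1}, \qquad f\in\cycT,\ \deg f = 0.
\]
Concretely, $f = \omega_1\nmlz{\psi_1}/(\omega_0\nmlz{\psi_0}+\omega_1\nmlz{\psi_1})$ by \lemref{lem:cycaffineCenter}. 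I then set $\hat\psi_{\bomega} \coloneqq (1-f)\hat\psi_0 + f\hat\psi_1$, which is a normalized representative of $\bX_{\psi_{\bomega}}$, and define
\[
\spann_{\const}(\bX_{\psi_0}, \bX_{\psi_1}, \bX_{\psi_{\bomega}}) \coloneqq \bigl\{\bX_{\lambda_0\hat\psi_0 + \lambda_1\hat\psi_{\bomega}} \;\bigm|\; \lambda_0,\lambda_1\neq 0\bigr\}.
\]
This is exactly $\spann_{\const}(\hat\psi_0,\hat\psi_{\bomega})$ in the sense of \defref{lem:combinationsConstantAffine}; the degrees agree since both have degree $-1$. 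At the level of points, by \propref{lem:constaffineCenter} the members are $\mu\bX_{\psi_0} + (1-\mu)\bX_{\psi_{\bomega}}$ with constant $\mu = \lambda_0/(\lambda_0+\lambda_1)$. Taking $\lambda_0 = 0$ formally, or equivalently the combination $\hat\psi_{\bomega}$ itself, shows $\bX_{\psi_{\bomega}}$ lies in the family.

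\emph{Independence of representatives.} If $\psi_i$ is replaced by $g_i\psi_i$ with $g_i\in\cycT$, then \propref{lem:propNormalization}(iv) gives $\nmlz{g_i\psi_i} = g_i\nmlz{\psi_i}$, so $\hat\psi_i$ is unchanged. The same holds for $\hat\psi_{\bomega}$, since it depends only on the point $\bX_{\psi_{\bomega}}$: normalized functions are determined by the point, because the normalized barycentric coordinates are $a\hat\psi(a,b,c)$, etc. Hence the family depends only on the three centers. An equivalent description in terms of points is the set $\{(1-sf)\bX_{\psi_0} + sf\,\bX_{\psi_1} \mid s\in\RR\}$ with $s$ constant, which makes the intrinsic nature evident. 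I would state that equivalent description and use it for the rest of the argument.

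\emph{Exclusions and symmetry.} The point $\bX_{\psi_0}$ corresponds to $s=0$ and $\bX_{\psi_1}$ to $sf\equiv 1$. The latter would force $f$ to be constant, which is excluded by the convention $\lambda_0,\lambda_1\neq 0$, i.e.\ $s\neq 0$. To exclude $\bX_{\psi_1}$ when $f$ happens to be constant, I would appeal to the stipulation, paralleling the remark after \lemref{lem:cycaffineCenter}, that the endpoints are never included by definition. I expect this to be the weakest step; I would handle it by matching the paper's convention that $\omega_0,\omega_1\not\equiv 0$.

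For the final implication, suppose $\bX_{\psi_{\overline{\bomega}}} = (1-\bar f)\bX_{\psi_0} + \bar f\,\bX_{\psi_1}$ belongs to the family. Then $\bar f = s f$ for some nonzero constant $s$. Hence $f = s^{-1}\bar f$, which shows $\bX_{\psi_{\bomega}}\in\spann_{\const}(\bX_{\psi_0}, \bX_{\psi_1}, \bX_{\psi_{\overline{\bomega}}})$. This is the same inversion argument as in the second part of \propref{lem:constaffineCenter}.
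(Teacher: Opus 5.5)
\textbf{The family you construct is the wrong one, and for it the exclusion claim is false.}

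You take constant-affine combinations of $\hat\psi_0$ and $\hat\psi_{\bomega}$. At the level of points this gives $(1-sf)\bX_{\psi_0}+sf\,\bX_{\psi_1}$ with $s$ constant. That is a constant-affine span of $\bX_{\psi_0}$ and $\bX_{\psi_{\bomega}}$; $\bX_{\psi_1}$ only enters through the line. Two things go wrong.

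First, your own definition requires $\lambda_0,\lambda_1\neq 0$. Then $\bX_{\psi_{\bomega}}$, which needs $\lambda_0=0$ (i.e.\ $s=1$), is not a member. Admitting it ``formally'' contradicts the definition, and your point description with $s\in\RR$ is not equivalent to it.

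Second, whenever $f$ is constant, $s=1/f$ gives $\bX_{\psi_1}$ with both $\lambda_i\neq 0$. Concretely, take $\bX_{\psi_0}=\bX_3$ and $\bX_{\psi_1}=\bX_6$, which are essentially different by \lemref{lem:essentiallDifferent}. Take $\psi_{\bomega}=\tfrac12\hat\phi_3+\tfrac12\hat\phi_6$, i.e.\ $\omega_0=1/(2\nmlz{\phi_3})$ and $\omega_1=1/(2\nmlz{\phi_6})$. Then $f\equiv\tfrac12$, and $-\hat\phi_3+2\hat\psi_{\bomega}=\hat\phi_6$, so $\bX_6$ lies in your family. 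The lemma asserts this never happens. It is a property to be proved, and no stipulation can remove from your set a function it contains.

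Your construction also disagrees with how the paper uses the lemma. With $S=a^2+b^2+c^2$, one gets $f_{16}/f_{15}=(S+4\sqrt{3}\,\area)/(S-4\sqrt{3}\,\area)$, which is not constant. So $\bX_{16}\notin$ your $\spann_{\const}(\bX_3,\bX_6,\bX_{15})$, whereas the paper has $\psi_{15;(-1):1}\cong\phi_{16}$.

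\textbf{The missing idea} is to use the third center only to select representatives of $\bX_{\psi_0}$ and $\bX_{\psi_1}$, and then to span those. For non-equilateral triangles the coefficients in $\psi_{\bomega}=\omega_0\psi_0+\omega_1\psi_1$ are unique. This is because the cyclic evaluation vectors of $\psi_0$ and $\psi_1$ are linearly independent (the rank-two argument). The paper then takes the family $\gamma_0\,\omega_0\psi_0+\gamma_1\,\omega_1\psi_1$ with $\gamma_0,\gamma_1\neq 0$. All required properties follow:
\begin{itemize}
\item $\bX_{\psi_{\bomega}}$ is the genuine member $\gamma_0=\gamma_1=1$.
\item Attaining $\bX_{\psi_0}$ (resp.\ $\bX_{\psi_1}$) would require the weight $\gamma_1\omega_1\nmlz{\psi_1}$ (resp.\ $\gamma_0\omega_0\nmlz{\psi_0}$) to vanish. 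It vanishes nowhere, so both endpoints are excluded.
\item Replacing $\psi_i$ by $g_i\psi_i$ with $g_i\in\cycT$ turns $\omega_i$ into $\omega_i/g_i$ and leaves both summands unchanged.
\item Replacing $\psi_{\bomega}$ by $h\psi_{\bomega}$ multiplies both summands by $h$, so the centers are unchanged.
\end{itemize}
On points the family reads $\frac{g}{1-g}=\frac{\gamma_1}{\gamma_0}\,\frac{f}{1-f}$. This is a constant rescaling of the division ratio, not of $f$. The final implication then follows by inverting the constant $\gamma_1/\gamma_0$, exactly as your last step does for $s$.
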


\begin{proof}[Proof of \lemref{lem:AffineFromCyclic}]
Let $\bX_{\psi_0}$ and $\bX_{\psi_1}$ be two essentially different triangle centers and $\bX_{\psi_{\bomega}} \in \spann_{\cycT}(\bX_{\psi_0}, \bX_{\psi_1})$.
It follows from the definition of cyclic-affine triangle center combinations that 
    $\psi_{\bomega}$ allows a decomposition according to 
    \teqref{eqn:cyclicaffine}
    with $\omega_0, \omega_1\in\cycT$.    
    Note that the decomposition is unique for non-equilateral triangle configurations, with the coefficients given by
    \begin{align*}
    \omega_0(a,b,c) &= \frac{\psi_1(a,b,c) \psi_2(b,c,a) - \psi_1(b,c,a) \psi_2(a,b,c)}{\psi_0(b,c,a) \psi_1(a,b,c) - \psi_0(a,b,c) \psi_1(b,c,a)}, \\
    \omega_1(a,b,c) &= \frac{\psi_0(a,b,c) \psi_2(b,c,a) - \psi_0(b,c,a) \psi_2(a,b,c)}{\psi_0(b,c,a) \psi_1(a,b,c) - \psi_0(a,b,c) \psi_1(b,c,a)}.
    \end{align*}
    This is a direct consequence of the fact that for $\triangle \neq (a,a,a)$, the matrix $\bM_{\psi}$ is of rank two (its rank cannot be one since the triangle center functions $\psi_0$ and $\psi_1$ correspond to essentially different triangle centers).
    Therefore, we obtain a set of constant-affine triangle center functions 
    $$
    \psi_{\gamma_0:\gamma_1}(a,b,c) = \gamma_0\, \omega_0(a,b,c)\, \psi_0(a,b,c) + \gamma_1 \,\omega_1(a,b,c) \,\psi_1(a,b,c)
    $$
    that contain $\psi_{\bomega}$ for $\gamma_0 = \gamma_1 = 1$. 
\end{proof}

\begin{example}[Structure of triangle center families on the Brocard axis and Kiepert hyperbola]\label{ex:brocard}
Recall that the line spanned by the circumcenter $\bX_3$ and the symmedian point $\bX_6$ is known as the \emph{Brocard axis} and its isogonal conjugate, the \emph{Kiepert hyperbola}, is a circumconic, that is, a conic section passing through the vertices of the underlying triangle~\cite[page 235]{Kimberling:1998:TCC}. 

\begin{figure}[t]
\centering
\begin{footnotesize}
\begin{overpic}[width=0.5\textwidth]{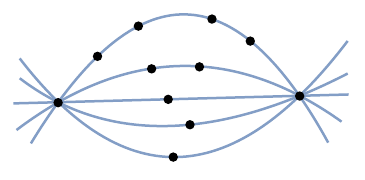}
\put(14,15.5){$\bX_3$}
\put(80,17){$\bX_6$}
\put(21,36){$\bX_{15}$}
\put(33,44.7){$\bX_{16}$}
\put(59,45.5){$\bX_{61}$}
\put(70,39){$\bX_{62}$}
\put(39,33){$\bX_{32}$}
\put(52,33.5){$\bX_{39}$}
\put(43,24.5){$\bX_{50}$}
\put(50,17.5){$\bX_{52}$}
\put(45,9){$\bX_{58}$}
\end{overpic}%
\end{footnotesize}%
\caption{Schematic illustration of the relationship between constant-affine combinations of triangle centers $\bX_3$ and $\bX_6$ showing the considered triangle centers. }\label{fig:constAffineBrocard}
\end{figure}

In this example we will illustrate the concepts of cyclic-affine and constant-affine triangle center families by studying the structure of sets of triangle centers on the Brocard axis. In our main result, stated in \thmref{thm:main} on triangle curves, two of the constant-affine families associated with the Brocard axis introduced here play a crucial role.

\begin{table}[!b]
\centering
    \begin{tabular}{ccc}
        $\bX_i$ & $\phi_i$ (ETC) & $\phi_i$ (mod.)  
        \\ 
        \hline
        $\bX_3$ & $a(-a^2+b^2+c^2)$ & $a(-a^2+b^2+c^2)$ 
        \\
        $\bX_6$ & $a$ & $a$ 
        \\
        $\bX_{15}$ & 
        $\sin\left(\alpha + \frac{\pi}{3}\right)$ & 
        $a\left(\sqrt{3}(-a^2+b^2+c^2) + 4 \area\right)$ 
        \\
        $\bX_{16}$ & 
        $\sin\left(\alpha - \frac{\pi}{3}\right)$ & 
        $ a\left(\sqrt{3}(-a^2+b^2+c^2) - 4 \area\right)$ 
        \\
        $\bX_{32}$ & 
        $a^3$ & 
        $2 a^3$ 
        \\
        $\bX_{39}$ & 
        $a(b^2+c^2)$ & 
        $2a(b^2+c^2)$ 
        \\
        $\bX_{50}$ & 
        $\sin\left(3\alpha\right)$ & 
        $2 a^3\left(a^4 + b^4 + c^4 -2a^2(b^2+c^2) + b^2c^2\right)$ 
        \\
        $\bX_{52}$ & 
        $\sec \alpha\left(\sec(2\beta) + \sec(2\gamma)\right)$ & 
        $2a\left(a^4 + b^4+c^4- 2a^2(b^2+c^2)\right)
        \left((b^2-c^2)^2 - a^2(b^2+c^2)\right)$ 
        \\
        $\bX_{58}$ & 
        $\frac{a}{b+c}$ & 
        $2a (a+b)(a+c)$ 
       \\
        $\bX_{61}$ & 
        $\sin\left(\alpha + \frac{\pi}{6}\right)$ & 
        $a \left(-a^2+b^2+c^2 + 4 \sqrt{3}  \area \right)$ 
        \\
        $\bX_{62}$ & 
        $\sin\left(\alpha - \frac{\pi}{6}\right)$ & 
        $a \left(-a^2+b^2+c^2 - 4 \sqrt{3}  \area \right)$ 
    \end{tabular}\caption{Table depicting the first one-hundred triangle centers that coincide with the Brocard axis together with their triangle center function as stated in the ETC, and their simplification using the law of cosines.
}\label{tab:brocardaxis}
\end{table}

Among the first one hundred triangle centers listed in Kimberling~\cite{Kimberling:2025:ETC}, in addition to $\bX_3$ and $\bX_6$, further points on the Brocard axis include the first isodynamic point $\bX_{15}$, the second isodynamic point $\bX_{16}$, the third power point $\bX_{32}$, the Brocard midpoint $\bX_{39}$, 
$\bX_{50}$, the orthocenter of the orthic triangle $\bX_{52}$, 
$\bX_{58}$,
$\bX_{61}$, as well as $\bX_{62}$. \tblref{tab:brocardaxis} lists their triangle center functions, along with equivalent expressions without trigonomic functions, used for the subsequent computations. 

\begin{table}[t]
    \centering
    \begin{tabular}{ccc}
        $\phi_i$ (mod.)& $\omega_{i,3}$ & $\omega_{i,6}$ \\
        \hline
        $\phi_{15}$ & $\sqrt{3}$ & $4\area$ \\
        $\phi_{16}$ & $\sqrt{3}$ & -$4\area$ \\
        $\phi_{32}$ & $-1$ & $a^2+b^2+c^2$ \\
        $\phi_{39}$ & $1 $ & $a^2+b^2+c^2$ \\
        $\phi_{50}$ & $(4\area)^2$ & 
        $-2a^2b^2c^2-(-a^2+b^2+c^2)(a^2-b^2+c^2)(a^2+b^2-c^2)$ \\
        $\phi_{52}$ & $\omega_{50,6}$ & $(4\area)^4$ \\
        $\phi_{58}$ & $-1$ & $(a + b + c)^2$ \\
        $\phi_{61}$ & $1$ & $4 \sqrt{3} \area$ \\
        $\phi_{62}$ & $1$ & $-4\sqrt{3}\area$ 
    \end{tabular}
    \caption{Table listing the cyclic-affine coefficients of the first one-hundred triangle centers that coincide with the Brocard axis. Specifically,  $\phi_i = \omega_{i,3} \phi_3+ \omega_{i,6} \phi_6$, 
    where $\phi_i$ is the expression in the third column of \tblref{tab:brocardaxis}, which is equivalent to the expression given in the ETC listed in the second column. }
    \label{tab:coeff_omega_combination}
\end{table}

First, we confirm that indeed all the triangle centers listed are indeed cyclic-affine combinations of $\bX_3$ and $\bX_6$, that is 
$$
B := \{\bX_{{15}}, \bX_{{16}}, \bX_{{32}}, \bX_{{39}}, \bX_{{52}}, \bX_{{52}}, \bX_{{58}}, \bX_{{61}}, \bX_{{62}} \}  \subset \spann_{\cycT}(\bX_{{3}}, \bX_{6}),
$$
To this end, we find coefficients $\omega_0, \omega_1 \in \cycT$ such that
$$
\phi_i(a,b,c) = \omega_{i,3}(a,b,c)\, \phi_3(a,b,c) + \omega_{i,6}(a,b,c)\,\phi_6(a,b,c),
$$
where $\phi_i(a,b,c)$ are the triangle center functions corresponding to the centers in $B$. The resulting
coefficients are stated in \tblref{tab:coeff_omega_combination}.

Any of the triangle centers in $B$, together with $\bX_3$ and $\bX_6$, span a constant-affine family of triangle centers that coincide with the Brocard axis. We show how these families relate, in particular, we confirm that 
\begin{align*}
\spann_{\const}(\bX_{3}, \bX_{6}, \bX_{{15}}) &= \spann_{\const}(\bX_{3}, \bX_{6}, \bX_{{16}}) \\
&= \spann_{\const}(\bX_{3}, \bX_{6}, \bX_{61}) \\
&= \spann_{\const}(\bX_{3}, \bX_{6}, \bX_{{62}}) 
\end{align*}
and 
$$
\spann_{\const}(\bX_{3}, \bX_{6}, \bX_{{32}}) = \spann_{\const}(\bX_{3}, \bX_{6}, \bX_{{39}}), 
$$
as depicted in \figref{fig:constAffineBrocard}.

For the algebraic confirmation of these statements, for every triangle center $\bX_i$ in $B$, we parametrize the triangle center functions in the corresponding constant-affine triangle center family using two scalars $\lambda_0,\lambda_1\in\RR\backslash\{0\}$ by 
$$
 \psi_{i;\lambda_0:\lambda_1}(a,b,c) = \lambda_0 \,  \omega_{i,3}(a,b,c)\, \phi_3(a,b,c) + \lambda_1 \,  \omega_{i,6}(a,b,c)\, \phi_6(a,b,c), 
$$
where $\omega_{i,3}$ and $\omega_{i,6}$ are the corresponding coefficients in~\tblref{tab:coeff_omega_combination} that ensure appropriate normalization. Note that for all considered triangle centers, we have that 
$$
\phi_i(a,b,c) = \psi_{i,1:1}(a,b,c).
$$

If two triangle centers $\bX_i$ and $\bX_j$ in $B$ belong to the same constant-affine family, we have
$$
\phi_j(a,b,c) \cong \psi_{i;\lambda_0:\lambda_1}(a,b,c), \quad \text{or equivalently, } \quad 
\phi_i(a,b,c) \cong \psi_{j;\tilde{\lambda_0}:\tilde{\lambda_1}}(a,b,c), 
$$
for some constants $\lambda_0$, $\lambda_1$, $\tilde{\lambda_0}$, and $\tilde{\lambda_1}$.
The  relationships found by Mathematica are summarized in~\tblref{tab:constantAffineBrocard}. 

In the main result of this paper, presented in~\thmref{thm:main}, we show that the triangle centers in $\spann_{\const}(\bX_3, \bX_6, \bX_{15})$ and $\spann_{\const}(\bX_3, \bX_6, \bX_{32})$ are semi-invariant curve-generating centers.

\begin{table}[t]
\centering
\begin{tabular}{c|cccc}
    $\spann_{\const}(\bX_3, \bX_6, \bX_{15})$ & 
    $\psi_{15,(-1):1} \cong \phi_{16}$ &
    $\psi_{15,1:3} \cong \phi_{61} $ & 
    $\psi_{15,(-1):3} \cong \phi_{62}$ \\
   $\spann_{\const}(\bX_3, \bX_6, \bX_{16})$ & 
    $\psi_{16,(-1):1} \cong \phi_{15}$ & 
    $\psi_{16,(-1):3} \cong \phi_{61} $ & 
    $\psi_{16,1:3} \cong \phi_{62}$ \\
     $\spann_{\const}(\bX_3, \bX_6, \bX_{32})$ & 
    $\psi_{32,(-1):1} \cong \phi_{39}$ & &
    \\
    $\spann_{\const}(\bX_3, \bX_6, \bX_{39})$ & 
    $\psi_{39,(-1):1} \cong \phi_{32}$ & &
    \\
    $\spann_{\const}(\bX_3, \bX_6, \bX_{61})$ & 
    $\psi_{61,3:1} \cong \phi_{15}$ &
    $\psi_{61,(-1):3} \cong \phi_{16} $ & 
    $\psi_{61,(-1):1} \cong \phi_{62}$ 
    \\
      $\spann_{\const}(\bX_3, \bX_6, \bX_{62})$ & 
    $\psi_{62,(-3):1} \cong \phi_{15}$ &
    $\psi_{62,1:3} \cong \phi_{16} $ & 
    $\psi_{62,(-1):1} \cong \phi_{61}$ 
    \end{tabular}
\caption{Algebraically found non-trivial identities relating the triangle families $\psi_{i,\omega_0:\omega_1}$ and triangle centers on the Brocard axis.   }\label{tab:constantAffineBrocard}    
\end{table}

\end{example}

\section{Triangle Families}

\begin{proof}[Proof of~\lemref{lem:propertiesFamilies}]~

\begin{enumerate}
    \item
The vertices of $\Delta_t^{\bPsi}$ read 
\begin{align}\label{eqn:ABCPsi}
\bA_t^{\bPsi} & = \frac{1}{\Psi_1(t) + \Psi_2(t) + \Psi_3(t)} \left(\Psi_1(t)\bA + \Psi_2(t) \bB + \Psi_3(t) \bC\right), \nonumber \\
\bB_t^{\bPsi} & = \frac{1}{\Psi_1(t) + \Psi_2(t) + \Psi_3(t)} \left(\Psi_3(t)\bA + \Psi_1(t) \bB + \Psi_2(t) \bC\right), \\
\bC_t^{\bPsi} & = \frac{1}{\Psi_1(t) + \Psi_2(t) + \Psi_3(t)}  \left(\Psi_2(t)\bA + \Psi_3(t) \bB + \Psi_1(t) \bC\right). \nonumber
\end{align}
A straightforward computation verifies that the centroids of $\triangle_t^{\bPhi}$ and $\triangle$ coincide
\[
 \frac{1}{3}\left( \bA + \bB + \bC\right) =  \frac{1}{3} \left(\bA_t^{\bPsi} + \bB_t^{\bPsi} + \bC_t^{\bPsi}\right).
\]

\item Without loss of generality, assume that $\Delta$ is centered at the origin. Consequently,  its vertices are related by 
 $$
 \bA = \bR \cdot \bC, \quad 
 \bB = \bR \cdot \bA, \quad  \text{and} \quad 
 \bC = \bR \cdot \bB, 
$$ 
where $\bR$ denotes the matrix corresponding to a rotation about $\nicefrac{2\pi}{3}$.
It follows from the expressions in Equation~\eqref{eqn:ABCPsi} that 
 $$
 \bR \cdot \bA_t^{\bPsi} =\bC_t^{\bPsi}, \quad
\bR \cdot \bB_t^{\bPsi} =\bA_t^{\bPsi}, \quad \text{and} \quad
 \bR \cdot \bC_t^{\bPsi} =\bB_t^{\bPsi}, \quad 
$$
that is, the vertices of $\Delta_t$ are related by a rotation by $\nicefrac{2\pi}{3}$ too. Consequently, $\Delta_t$ is an equilateral triangle.
\end{enumerate}

\end{proof}

\begin{proof}[Proof of Definition \& Lemma~\ref{deflemma:conctrianglefamilies}]
    The concatenation \((\bPhi\circ\bPsi)(t)\) is given by the generating functions
\begin{align}\label{eqn:concatenation}
    (\bPsi\circ\tilde\bPsi)_1 &= \frac{ (\Psi_1\tilde\Psi_1 + \Psi_2\tilde\Psi_3 + \Psi_3\tilde\Psi_2)\bA + (\Psi_1\tilde\Psi_2 + \Psi_2\tilde\Psi_1 + \Psi_3\tilde\Psi_3)\bB + (\Psi_1\tilde\Psi_3 + \Psi_2\tilde\Psi_2 + \Psi_3\tilde\Psi_1)\bC }{(\Psi_1 + \Psi_2 + \Psi_3)(\tilde\Psi_1 + \tilde\Psi_2 + \tilde\Psi_3)} \notag \\
    (\bPsi\circ\tilde\bPsi)_2 &= \frac{ (\Psi_1\tilde\Psi_3 + \Psi_2\tilde\Psi_2 + \Psi_3\tilde\Psi_1)\bA +  (\Psi_1\tilde\Psi_1 + \Psi_2\tilde\Psi_3 + \Psi_3\tilde\Psi_2)\bB + (\Psi_1\tilde\Psi_2 + \Psi_2\tilde\Psi_1 + \Psi_3\tilde\Psi_3)\bC }{(\Psi_1 + \Psi_2 + \Psi_3)(\tilde\Psi_1 + \tilde\Psi_2 + \tilde\Psi_3)} \\
    (\bPsi\circ\tilde\bPsi)_3 &= \frac{ (\Psi_1\tilde\Psi_2 + \Psi_2\tilde\Psi_1 + \Psi_3\tilde\Psi_3)\bA +  (\Psi_1\tilde\Psi_3 + \Psi_2\tilde\Psi_2 + \Psi_3\tilde\Psi_1)\bB + (\Psi_1\tilde\Psi_1 + \Psi_2\tilde\Psi_3 + \Psi_3\tilde\Psi_2)\bC }{(\Psi_1 + \Psi_2 + \Psi_3)(\tilde\Psi_1 + \tilde\Psi_2 + \tilde\Psi_3)}.  \notag
\end{align}

Furthermore note that 
$$
(\bPsi\circ\tilde\bPsi)_1+(\bPsi\circ\tilde\bPsi)_2+(\bPsi\circ\tilde\bPsi)_3 = \left(\Psi_1 + \Psi_2 + \Psi_3\right)\left(\tilde\Psi_1 + \tilde\Psi_2 + \tilde\Psi_3\right)\neq 0,
$$
and 
$$
(\bPsi\circ\tilde\bPsi)_1 \equiv (\bPsi\circ\tilde\bPsi)_2 \equiv (\bPsi\circ\tilde\bPsi)_3 
$$
implies 
$$
\bPsi_1 \equiv \bPsi_2 \equiv \bPsi_3 \quad \text{ and } \quad \tilde\bPsi_1 \equiv \tilde\bPsi_2 \equiv \tilde\bPsi_3,
$$
and thus the statement follows.
\end{proof}

\begin{proof}[Proof of \lemref{lem:proptriangleConcatenation}]~
\begin{enumerate}
    \item The communtativity of the concatenation of $\bPsi$ triangle families follows directly from Definition \& Lemma~\ref{deflemma:conctrianglefamilies}.
    \item To show that $\bPsi$-triangle families are associative, we first compute 
\begin{align*}
        \ttPsi \circ (\tPsi \circ \Psi) & = 
        \begin{pmatrix}
\Psi_3 \ttPsi_3\!+\!\Psi_2 \ttPsi_1\!+\! \Psi_1 \ttPsi_2   
 &  \Psi_1 \ttPsi_1  \!+\! \Psi_2 \ttPsi_3 \!+\! \Psi_3 \ttPsi_2  &
 \Psi_2 \ttPsi_2 \!+\!\Psi_1 \ttPsi_3 \!+\! \Psi_3 \ttPsi_1  \\
\Psi_2 \ttPsi_2 \!+\! \Psi_1 \ttPsi_3 \!+\! \Psi_3 \ttPsi_1  & \Psi_3 \ttPsi_3 \!+\!\Psi_1 \ttPsi_2 \!+\! \Psi_2 \ttPsi_1  & \Psi_1 \ttPsi_1 \!+\!\Psi_2 \ttPsi_3 \!+\! \Psi_3 \ttPsi_2 \\
\Psi_1 \ttPsi_1 \!+\! \Psi_2 \ttPsi_3 \!+\! \Psi_3 \ttPsi_2  & \Psi_2 \ttPsi_2 \!+\! \Psi_1 \ttPsi_3 \!+\! \Psi_3 \ttPsi_1  & \Psi_3 \ttPsi_3 \!+\!\Psi_1 \ttPsi_2 \!+\!\Psi_2 \ttPsi_1
\end{pmatrix}
\begin{pmatrix}
    \tPsi_1 \\ \tPsi_2\\ \tPsi_3
\end{pmatrix}
    \end{align*}
    
    Since the role of $\Psi$ and $\ttPsi$ are interchangable, 
    it follows from the above expression that 
    $$
    \ttPsi \circ (\tPsi \circ \Psi) = \Psi \circ (\tPsi \circ \ttPsi)
    $$
By commutativity of the concatenation, we have
$$
\Psi \circ (\tPsi \circ \ttPsi) = \Psi \circ (\ttPsi \circ \tPsi) = (\ttPsi \circ \tPsi) \circ  \Psi,
$$
and we ultimately conclude that the concatenation of triangle families is associative.

\item This follows directly form the definition of $\bPhiID \circ \bPsi$.

\item This claim follows readily from using Equation~\eqref{eqn:concatenation} and rephrasing 
    \begin{align*}
      \bA_t^{\bPsi^{-1}\circ\bPsi} =  \bA,  \quad 
    \bB_t^{\bPsi^{-1}\circ\bPsi} = \bB,
        \quad
        \text{ and } \quad
    \bC_t^{\bPsi^{-1}\circ\bPsi} =\bC
    \end{align*}
    as 
    $$
    \begin{pmatrix}
        \Psi_1 & \Psi_3 & \Psi_2 \\
        \Psi_2 & \Psi_1 & \Psi_3 \\
        \Psi_3 & \Psi_2 & \Psi_1
    \end{pmatrix}
    \begin{pmatrix}
        \Psi_1^{-1}\\
        \Psi_2^{-1}\\
        \Psi_3^{-1}
    \end{pmatrix}
    = 
    \begin{pmatrix}
        1\\
        0\\
        0
    \end{pmatrix}.
    $$
It follows from the definition of a triangle family that this matrix is invertible, as its determinant
    $$
    \Psi_1^3 + \Psi_2^3 + \Psi_3^3 - 3\Psi_1\Psi_2\Psi_3
    = \frac{1}{2}\left(\Psi_1 + \Psi_2+\Psi_3\right)
    \left((\Psi_1 - \Psi_2)^2+(\Psi_2-\Psi_3)^2 + (\Psi_3 - \Psi_1)^2\right)
   \neq 0.
    $$ 

\end{enumerate}
\end{proof}

\begin{proof}[Proof of Definition \& Lemma~\ref{lem:nedian}]
 Recall from \secref{sec:CyclicAffineTriangleCenterCombinations} that three points given by their barycentric coordinates $\bP = [P_1: P_2:P_3]^T$, $\bQ = [Q_1: Q_2: Q_3]^T$, and $\bS = [S_1: S_2: S_3]^T$ are collinear, if 
$$
0 = 
\det
\begin{pmatrix} 
\bP & \bQ & \bS
\end{pmatrix} = (\bP \times \bQ) \cdot \bS.
$$
Consequently, a line can be associated with a vector $\ell_{\bP\bQ} = \bP \times \bQ$ and incidence of a third point $\bS$ can be tested by computing the corresponding dot product. 
Similarly, the intersection $\bI$ of two lines $\ell_1$ and $\ell_2$ corresponds to a vector that is orthogonal to both, resulting in $\bI = \ell_1 \times \ell_2$.

To compute the barycentric coordinates of the stated points $\bA_t$, $\bB_t$, $\bC_t$ of the nedian family, we first determine  
$$
\ell_{\bA \bA_t'} = \begin{bmatrix}0\\-1+t\\0\end{bmatrix}, \quad 
\ell_{\bB \bB_t'} = \begin{bmatrix}t\\0\\0\end{bmatrix}, \quad \text{and}\quad
\ell_{\bC \bC_t'} = \begin{bmatrix}-t\\ 1-t\\ 0\end{bmatrix}.
$$
where 
$$
\bA_t' =\begin{bmatrix}t \\0 \\ 1-t\end{bmatrix}, \quad 
\bB_t' = \begin{bmatrix}0\\1-t\\ t\end{bmatrix}, \quad \text{and}\quad \bC_t' = \begin{bmatrix}1-t\\t\\0\end{bmatrix}. 
$$
It follows that the intersections of corresponding lines yield 
\begin{align*}
\bA_t = \ell_{\bB\bB_t'} \times \ell_{\bC\bC_t'} = \begin{bmatrix}(1-t)t\\ t^2\\ (1-t)^2\end{bmatrix},\quad
\bB_t = \ell_{\bC\bC_t'} \times \ell_{\bA\bA_t'} = \begin{bmatrix}(1-t)^2\\(1-t)t\\ t^2\end{bmatrix},\quad
\bC_t = \ell_{\bA\bA_t'} \times \ell_{\bB\bB_t'} = \begin{bmatrix} t^2\\ (1-t)^2\\ (1-t)t\end{bmatrix}.
\end{align*}
\end{proof}

\begin{proof}[Proof of \lemref{lem:inversesbPhi}]
The statements follow from a straightforward computation using \lemref{lem:proptriangleConcatenation}: 

\begin{itemize}
\item The inverse of the family of scaled triangles $\bPhiS$ described in~\teqref{eqn:scaling} is given by
    $$
    \bPhiS^{-1} : t \mapsto \left( 1 + \frac{2}{t}, 1 - \frac{1}{t}, 1 - \frac{1}{t}\right)
    $$
For $t\neq 0$, this inverse is a scaling by $\frac{1}{t}$, 
    $$
    \bPhiS^{-1}(t) = \bPhiS\left(\frac{1}{t}\right).
    $$ 

\item According to the formula in \lemref{lem:proptriangleConcatenation}, we have that
    $$
    \bPhiAinv \colon t \mapsto \left(
    \frac{(1-t)t}{1-3t(1-t)},
    \frac{t^2}{1-3t(1-t)}, 
    \frac{(1-t)^2}{1-3t(1-t)}\right).
    $$

To show that $\bPhiAinv(t)$ is a member of the nedian family, our goal is to find a parameter $t_N\in\RR$ that would satisfy $\bPhiAinv(t) = \bPhiN(t_N)$. Consequently, for $i\in \{1,2,3\}$, we consider 
    \begin{align*}
        \frac{\left(\bPhiAinv\right)_{i}(t)}
        {\left(\bPhiAinv\right)_{1}(t) + \left(\bPhiAinv\right)_{2}(t)+ \left(\bPhiAinv\right)_{3}(t)} &= 
        \frac{\left(\bPhiN\right)_{i}(t_N)}
        {\left(\bPhiN\right)_{1}(t_N) + \left(\bPhiN\right)_{2}(t_N)+ \left(\bPhiN\right)_{3}(t_N)}.
    \end{align*}
    These three equations simplify to 
    \begin{align*}
         \frac{(1-t)t}{1-3t(1-t)} &= \frac{(1-\tN)\tN}{1-(1-\tN)\tN}, \\
         \frac{t^2}{1-3t(1-t)} &= \frac{\tN^2}{1-(1-\tN)\tN}, \\
         \frac{(1-t)^2}{1-3t(1-t)} &= \frac{(1-\tN)^2}{1-(1-\tN)\tN}.
    \end{align*}
    While all three equations are quadratic in $\tN$, for $t\neq \nicefrac{1}{2}$, they share a common solution, namely
    $$
    \tN = -\frac{t}{1 - 2t}.
    $$

\item   According to the formula in \lemref{lem:proptriangleConcatenation}, it holds that 
    $$
    \bPhiNinv \colon t \mapsto \left(
    0, \frac{1-t}{(1-2t)(1-(1-t)t)}, -\frac{t}{(1-2t)(1-(1-t)t}\right).
    $$
    Since the nedian triangle family only degenerates to a point for \(t=\nicefrac{1}{2}\) and \((t^2-t+1) > 0\), the claim follows after dividing by the common factors \((2t-1)(t^2-t+1)\neq 0\) due to the homogeneity of barycentric coordinates. 

  Similarly to the other case, to show that $\bPhiN^{-1}(t)$ is a member of the aliquot family, our goal is to find a parameter $\tA\in\RR$ that would satisfy $\bPhiNinv(t) = \bPhiA(\tA)$. Consequently, for $i\in \{1,2,3\}$, we consider 
    \begin{align*}
        \frac{\left(\bPhiNinv\right)_{i}(t)}
        {\left(\bPhiNinv\right)_{1}(t) + \left(\bPhiNinv\right)_{2}(t)+ \left(\bPhiNinv\right)_{3}(t)} &= 
        \frac{\left(\bPhiA\right)_{i}(\tA)}
        {\left(\bPhiA\right)_{1}(\tA) + \left(\bPhiA\right)_{2}(\tA)+ \left(\bPhiA\right)_{3}(\tA)}.
    \end{align*}
    These three equations simplify to 
    \begin{align*} 
    0 = 0, \quad
    \frac{(1-t)}{1-2t} = 1-\tA, \quad
    -\frac{t}{1-2t} = \tA.
    \end{align*}
    For $t \neq \nicefrac{1}{2}$, both equations are solved by
    $$
    \tA = -\frac{t}{1 - 2t}.
    $$
\end{itemize}

\end{proof}

\end{document}